 \definecolor{darkblue}{RGB}{0,0,160}
\def\cic{\mathbf}
\def\eps{\varepsilon}
\def\d{{\rm d}}
\def\R {\mathbb{R}}
\def \l {\langle}
\def \r {\rangle}
\def \and{\qquad\text{and}\qquad}
\newcommand{\supp}{\mathrm{supp}\,}
\newcounter{thms}
\newcounter{other}
\numberwithin{other}{section}
\newtheorem{proposition}[subsection]{Proposition}
\newtheorem{theorem}[thms]{Theorem}
\newtheorem*{theorem*}{Theorem}
\newtheorem*{proposition*}{Proposition}
\newtheorem{corollary}{Corollary}
\numberwithin{corollary}{thms}
\newtheorem{lemma}[subsection]{Lemma}
\theoremstyle{definition}
\newtheorem{remark}[subsection]{Remark}
\def\vint_#1{\mathchoice%
      {\mathop{\kern 0.2em\vrule width 0.6em height 0.69678ex depth -0.58065ex
              \kern -0.8em \intop}\nolimits_{\kern -0.4em#1}}%
      {\mathop{\kern 0.1em\vrule width 0.5em height 0.69678ex depth -0.60387ex
              \kern -0.6em \intop}\nolimits_{#1}}%
      {\mathop{\kern 0.1em\vrule width 0.5em height 0.69678ex depth -0.60387ex
              \kern -0.6em \intop}\nolimits_{#1}}%
      {\mathop{\kern 0.1em\vrule width 0.5em height 0.69678ex depth -0.60387ex
              \kern -0.6em \intop}\nolimits_{#1}}}
\def\vintslides_#1{\mathchoice%
      {\mathop{\kern 0.1em\vrule width 0.5em height 0.697ex depth -0.581ex
              \kern -0.6em \intop}\nolimits_{\kern -0.4em#1}}%
      {\mathop{\kern 0.1em\vrule width 0.3em height 0.697ex depth -0.604ex
              \kern -0.4em \intop}\nolimits_{#1}}%
      {\mathop{\kern 0.1em\vrule width 0.3em height 0.697ex depth -0.604ex
              \kern -0.4em \intop}\nolimits_{#1}}%
      {\mathop{\kern 0.1em\vrule width 0.3em height 0.697ex depth -0.604ex
              \kern -0.4em \intop}\nolimits_{#1}}}
\newcommand{\aveint}[2]{\mathchoice%
      {\mathop{\kern 0.2em\vrule width 0.6em height 0.69678ex depth -0.58065ex
              \kern -0.8em \intop}\nolimits_{\kern -0.45em#1}^{#2}}%
      {\mathop{\kern 0.1em\vrule width 0.5em height 0.69678ex depth -0.60387ex
              \kern -0.6em \intop}\nolimits_{#1}^{#2}}%
      {\mathop{\kern 0.1em\vrule width 0.5em height 0.69678ex depth -0.60387ex
              \kern -0.6em \intop}\nolimits_{#1}^{#2}}%
      {\mathop{\kern 0.1em\vrule width 0.5em height 0.69678ex depth -0.60387ex
              \kern -0.6em \intop}\nolimits_{#1}^{#2}}}
\numberwithin{equation}{section}
\title[Sparse bounds  via Fourier transform  ]{Sparse bounds for maximal rough singular integrals\\ via the Fourier transform}
 \author[F.\ Di Plinio]{Francesco Di Plinio} \address{\noindent (FDP) Department of Mathematics, University of Virginia,      Box 400137, Charlottesville, VA 22904, USA   }
 \email{francesco.diplinio@virginia.edu}
\author[T.P.\ Hyt\"onen]{Tuomas P.\ Hyt\"onen}\address{
 \noindent
(TPH) Department of Mathematics and Statistics, P.O.B. 68, FI-00014 University of Helsinki, Finland}
\email{tuomas.hytonen@helsinki.fi}
  \author[K.\ Li]{Kangwei Li} \address{\noindent (KL) BCAM, Basque Center for Applied Mathematics, Bilbao, Spain}
 \email{kli@bcamath.org}
  \subjclass[2010]{Primary: 42B20. Secondary: 42B25}
 \keywords{Sparse domination,   rough singular integrals, weighted norm inequalities}
\thanks{F. Di Plinio was partially
supported by the National Science Foundation under the grants NSF-DMS-1500449 and  NSF-DMS-1650810.
 T. Hyt\"onen was supported by the Finnish Centre of Excellence in Analysis and Dynamics Research. 
    K. Li was supported by Juan de la Cierva-Formaci\'on 2015 FJCI-2015-24547.
     Some of the results were discovered at the Mathematical Sciences Research Institute (MSRI) in Berkeley,  during the workshop ``Recent trends in harmonic analysis'', in which Di Plinio and Hyt\"onen participated with the support of the MSRI and the Clay Mathematics Institute.   Di Plinio and Li  are also supported by the Severo Ochoa Program SEV-2013-0323 and by Basque Government BERC Program 2014-2017.}
\begin{document}
  \begin{abstract}
We prove that the class of convolution-type   kernels satisfying suitable decay conditions of the Fourier transform, appearing in the works of Christ \cite{Ch88}, Christ-Rubio de Francia \cite{CRub} and Duoandi\-koet\-xea-Rubio de Francia \cite{DR} gives rise to maximally truncated singular integrals satisfying a sparse bound by $(1+\eps,1+\eps)$-averages for all $\eps>0$, with linear growth in $\eps^{-1}$. This is an extension of the sparse domination principle by   Conde-Alonso, Culiuc, Ou and the first author \cite{CoCuDPOu}  to  maximally truncated singular integrals. 
Our results  cover the rough homogeneous singular integrals on $\R^d$
\[
T_\Omega f(x)=  \mathrm{p.v.}\int_{\R^d} f(x-t) \frac{\Omega(t/|t|)}{|t|^d} \, \d t
\] with  angular part $\Omega\in L^\infty(S^{d-1})$ and having vanishing integral on the sphere.
Consequences of our sparse bound include novel quantitative weighted norm estimates as well as Fefferman-Stein type inequalities. In particular, we obtain that the $L^2(w)$ norm of the maximal truncation of $T_\Omega$ depends quadratically on the Muckenhoupt constant $[w]_{A_2}$, extending a result originally by Hyt\"onen, Roncal and Tapiola \cite{HRT}.
 A suitable convex-body valued version of the sparse bound is also deduced and employed towards novel matrix weighted norm inequalities for 
the maximal truncated rough homogeneous  singular integrals.
  Our result is quantitative, but even the qualitative statement is new, and the present approach via sparse domination is the only one currently known for the matrix weighted bounds of this class of operators. 

   \end{abstract}

\maketitle

\section{Introduction and main results}
 Let $\eta\in (0,1)$. A countable collection $\mathcal S$ of cubes of $\R^d$ is said to be $\eta$-sparse if there exist    measurable sets $\{E_I:I \in \mathcal S\}$ such that
\[
E_I \subset I, \, |E_I| \geq \eta |I|, \qquad I,J\in \mathcal S, I \neq J \implies E_I \cap E_J  = \varnothing.
\]
 Let $T$ be a sublinear operator mapping the space $L^\infty_0(\R^d)$ of complex-valued, bounded and compactly supported functions on $\R^d$ into locally integrable functions. We say that $ T$ has the sparse $(p_1,p_2)$ bound   \cite{CKL} if there exists a constant $C>0$ such that for all $f_1,f_2\in L^\infty_0(\R^d)$ we may find a $\frac12$-sparse collection $\mathcal S=\mathcal S(f_1,f_2)$ such that
\[
\begin{split}
\left|\l Tf_1 , f_2\right\r| \leq C  \sum_{Q\in \mathcal S} |Q| \prod_{j=1}^2 \l f_j \r_{p_j,Q}\end{split}
\]
in which case we denote by $\|T  \|_{(p_1,p_2), \mathsf{sparse}}$ the least such constant $C$. As customary,
\[\l f \r_{p,Q} = \frac{\|f\cic{1}_Q\|_p}{|Q|^{\frac1p}}, \qquad p \in (0,\infty].
\]

  Estimating the sparse norm(s) of   a sublinear or multisublinear operator entails a sharp control over the behavior of such operator in weighted $L^p$-spaces; this theme has been recently pursued by several authors, see for instance \cite{BFP,CuDPOu,LSMF,LMena,LS,ZK2}. This sharp control is exemplified in the  following proposition, which is a collection of known facts from the indicated references.
\begin{proposition}\label{general}
Let $T$ be a sublinear operator on $\R^d$  mapping    $L^\infty_0(\R^d)$ to  $L^1_{\mathrm{loc}}(\R^d)$. Then the following hold.
\begin{itemize}
\item[1.] \cite[Appendix B]{CoCuDPOu} Let $1\leq p_1, p_2<\infty$. There is an absolute  constant $C_{p_2}>0$ such that \[
\|T:L^{p_1}(\R^d)\to L^{p_1,\infty}(\R^d)\|\leq C_{p_2} \|  T \|_{(p_1,p_2), \mathsf{sparse}} \]
\item[2.] \cite[Proposition 4.1]{DPLer2013} If
\begin{equation}
\label{Psi}
 \Psi(t):= \| T \|_{\left(1+\frac{1}{t} ,  1+\frac{1}{t}\right),\mathsf{sparse}}  <\infty \qquad \forall t>1,  \end{equation} then there is an absolute  constant $C>0$ such that
\[
\|T\|_{ L^{2}(w,\R^d) } \leq C [w]_{A_2}   \Psi\left( C [w]_{A_2}   \right). \qquad
\]
In particular,
\[
\sup_{t>1} \Psi(t) <\infty \implies
 \|T\|_{ L^{2}(w,\R^d)} \leq C [w]_{A_2}
     .\]
\end{itemize}
\end{proposition}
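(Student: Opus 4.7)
The plan is to treat Parts 1 and 2 separately, each time reducing to a bound on the sparse form $\Lambda(f_1, f_2) := \sum_{Q \in \mathcal S}|Q|\langle f_1\rangle_{p_1, Q}\langle f_2\rangle_{p_2, Q}$ appearing in the hypothesis. The common starting point is the pointwise inequality
\[
\Lambda(f_1, f_2) \leq 2\int_{\R^d} M_{p_1}f_1(x) \cdot M_{p_2}f_2(x) \,\d x, \qquad M_p h := (M|h|^p)^{1/p},
\]
obtained from $|Q| \leq 2|E_Q|$ and the pairwise disjointness of $\{E_Q\}$.

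For Part 1, by the natural duality for weak-type spaces, it suffices to control $|\langle Tf, \mathbf{1}_F\rangle| \lesssim \|f\|_{p_1}|F|^{1/p_1'}$ for arbitrary measurable $F$ of finite measure. Applying the sparse bound with $f_2 = \mathbf{1}_F$ and the pointwise domination above reduces the task to a Lorentz-space H\"older inequality: $M_{p_1}f$ lies in $L^{p_1,\infty}$ with norm $\lesssim \|f\|_{p_1}$ by the Hardy--Littlewood maximal theorem, while a direct level-set computation shows $(M\mathbf{1}_F)^{1/p_2}$ lies in $L^{p_1',1}$ with norm $\lesssim_{p_2} |F|^{1/p_1'}$; the pairing $L^{p_1,\infty}\cdot L^{p_1',1} \hookrightarrow L^1$ closes the argument and produces the stated constant $C_{p_2}$.

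For Part 2, I would dualize via $\|Tf\|_{L^2(w)} = \sup_{\|g\|_{L^2(\sigma)}=1}|\langle Tf, g\rangle|$ with $\sigma := w^{-1}$, and apply the sparse bound at the exponent $p = 1 + 1/t$ with $t := C[w]_{A_2}$ for a large absolute constant $C$. On each cube $Q$, H\"older with conjugate exponents $(2/p, 2/(2-p))$ yields
\[
\langle f\rangle_{p,Q} \leq \langle f^2 w\rangle_Q^{1/2}\langle \sigma^{p/(2-p)}\rangle_Q^{(2-p)/(2p)},
\]
and since $p/(2-p) = 1 + 2/(t-1)$ with $\max\{[w]_{A_\infty}, [\sigma]_{A_\infty}\} \leq [w]_{A_2}$, the Hyt\"onen--P\'erez sharp reverse H\"older inequality linearizes the last factor into $\langle \sigma\rangle_Q^{1/2}$. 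Combined with the symmetric bound for $g$ and the $A_2$ condition $\langle w\rangle_Q\langle\sigma\rangle_Q \leq [w]_{A_2}$, one obtains
\[
|Q|\langle f\rangle_{p,Q}\langle g\rangle_{p,Q} \lesssim [w]_{A_2}^{1/2}\Bigl(\int_Q f^2 w\Bigr)^{1/2}\Bigl(\int_Q g^2\sigma\Bigr)^{1/2},
\]
after which a Cauchy--Schwarz/principal-cube argument (in the spirit of Hyt\"onen--P\'erez) absorbs the remaining factor of $[w]_{A_2}^{1/2}$ and delivers $\|f\|_{L^2(w)}\|g\|_{L^2(\sigma)}$.

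The principal obstacle is threading the needle on the exponent $p$: choosing $p$ too close to $1$ steps outside the Hyt\"onen--P\'erez reverse H\"older regime, while choosing $p$ too far from $1$ inflates the penalty $\Psi(t)$. The calibration $t \sim [w]_{A_2}$ is forced by the sharp reverse H\"older exponent of the form $1 + c/[w]_{A_\infty}$ together with the chain $[w]_{A_\infty} \leq [w]_{A_2}$; this is precisely what converts an $A_\infty$-flavored analysis of the sparse form into the linear $A_2$ weighted bound claimed in the statement.
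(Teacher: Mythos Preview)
The paper does not supply its own proof of this proposition; it is stated as a collection of known facts from the cited references. So your proposal must be assessed on its own merits.

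Your sketch for Part~1 is essentially the standard argument, though you should note that the level-set computation placing $(\mathrm{M}\cic{1}_F)^{1/p_2}$ in $L^{p_1',1}$ converges only when $p_2<p_1'$; this restriction is harmless for the applications in the paper but is not visible in your write-up. The endpoint $p_1=1$ also requires a small modification, since the Lorentz duality $L^{1,\infty}$--$L^{\infty,1}$ is not available in the form you use.

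Part~2 contains a genuine gap. The per-cube inequality you display,
\[
|Q|\langle f\rangle_{p,Q}\langle g\rangle_{p,Q}\lesssim [w]_{A_2}^{1/2}\Big(\int_Q f^2 w\Big)^{1/2}\Big(\int_Q g^2\sigma\Big)^{1/2},
\]
is correct, but after summing over $Q\in\mathcal S$ the right-hand side can be \emph{infinite}, so no subsequent argument can extract a further factor of $[w]_{A_2}^{1/2}$ from it. Concretely, take $w\equiv 1$, $f=g=\cic{1}_{[0,1]}$ on $\R$, and the $\tfrac12$-sparse family $\mathcal S=\{[0,2^k):k\geq 0\}$; then $\sum_{Q\in\mathcal S}(\int_Q f^2)^{1/2}(\int_Q g^2)^{1/2}=\sum_{k\geq 0}1=\infty$ while $\|f\|_2\|g\|_2=1$. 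The error is that by bounding $\langle w\rangle_Q^{1/2}\langle\sigma\rangle_Q^{1/2}\leq [w]_{A_2}^{1/2}$ uniformly you have discarded exactly the weight information that the Carleson embedding (or parallel stopping-cube) step needs. The correct route is to \emph{retain} the factors $\langle w\rangle_Q^{1/2}\langle\sigma\rangle_Q^{1/2}$ after the reverse H\"older step; one then recognizes that the resulting expression is precisely the quantity that arises in the standard sharp $A_2$ proof for the $(1,1)$-sparse form after a single Cauchy--Schwarz, and that proof (which is not a second Cauchy--Schwarz but a Carleson embedding with weighted Carleson sequence, or equivalently a principal-cube decomposition run against the weights) delivers the linear $[w]_{A_2}$ bound. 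Your calibration $t\sim [w]_{A_2}$ and the use of the sharp reverse H\"older inequality are correct; only this final summation step needs to be redone.
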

In this article, we are concerned with the sparse norms \eqref{Psi} of a class  of  convolution-type singular integrals   whose systematic study dates back to   the celebrated works by  Christ \cite{Ch88}, Christ-Rubio de Francia \cite{CRub}, and Duoandikoetxea-Rubio de Francia \cite{DR}, admitting a decomposition with good decay properties of the Fourier transform. To wit, let $\{K_s:\R^d\to \mathbb C, s\in \mathbb Z\}$ be a sequence of (smooth) functions with the   properties that
\begin{equation}
\label{assker}
\begin{split}
&\mathrm{supp}\, K_s\subset A_s:=\{x\in \R^d: 2^{s-4}< |x|_\infty< 2^{s-2}\},
\\
&\sup_{s\in \mathbb Z}  2^{sd}\|K_s\|_\infty     \leq 1, \\
& \sup_{s\in \mathbb Z} \sup_{\xi \in \R^d} \max\big\{|2^s \xi|^\alpha,|2^s \xi|^{-\alpha}\big\} |\widehat{K_s}(\xi)| \leq 1,
\end{split}
\end{equation}
for some $\alpha>0$.
We  consider truncated singular integrals of the type
\[
T f(x,t_1,t_2) = \sum_{t_1 < s\leq t_2 } K_s*f(x), \quad t_1,t_2 \in \mathbb Z,\]
and their  maximal version
\begin{equation}\label{deftstar}
T_\star f(x):=\sup_{t_1\leq t_2 } \left|  T  f(x,t_1,t_2) \right|.
\end{equation}
 \begin{theorem} \label{theoremRK}
   For all   $0<\eps<1$   \[
 \|  T_\star  \|_{\left(1+\eps , 1+\eps\right),\mathsf{sparse}}  \lesssim  \frac{1}{\eps},\]
with absolute dimensional implicit constant, in particular uniform over families $\{K_s\}$ satisfying \eqref{assker}.
\end{theorem}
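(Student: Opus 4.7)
The strategy is to extend the single-scale sparse domination argument of \cite{CoCuDPOu}, proved there for the non-maximal operator $T$, to the maximal truncation $T_\star$ by means of a pointwise linearization of the supremum in \eqref{deftstar}. As is standard for sparse bounds, it suffices to establish the local recursive estimate: for every cube $Q_0$ and every $f, g \in L^\infty_0(\R^d)$ supported in $Q_0$, there exists a family $\mathcal E = \mathcal E(f,g)$ of pairwise disjoint dyadic subcubes of $Q_0$ with $|\bigcup \mathcal E| \le \tfrac12 |Q_0|$ satisfying
\[
|\l T_\star f, g\r| \le \frac{C}{\eps}\,|Q_0|\,\l f\r_{1+\eps, Q_0} \l g\r_{1+\eps, Q_0} + \sum_{Q \in \mathcal E} |\l T_\star(f\ind_Q), g\ind_Q\r|,
\]
from which a standard iteration extracts a $\tfrac12$-sparse collection witnessing the claim. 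The family $\mathcal E$ is the Calder\'on--Zygmund stopping-time collection of maximal dyadic subcubes of $Q_0$ on which either $\l f\r_{1+\eps, \cdot}$ or $\l g\r_{1+\eps, \cdot}$ exceeds a large absolute constant multiple of the corresponding average over $Q_0$; the weak $(1,1)$ bound for the dyadic maximal function provides $|\bigcup \mathcal E| \le \tfrac12 |Q_0|$ once that multiple is taken large enough.

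To access the individual scales inside $T_\star$, select measurable functions $t_1, t_2 \colon \R^d \to \Z$ and a phase $\sigma \colon \R^d \to \{|z|=1\}$ which essentially realize the supremum in \eqref{deftstar}. Interchanging summation and integration then yields
\[
\l T_\star f, g\r = \sum_{s \in \Z} \l K_s * f, g_s\r, \qquad g_s(x) := \sigma(x) g(x) \ind_{(t_1(x), t_2(x)]}(s).
\]
The crucial observation is that $|g_s| \le |g|$ and $\supp g_s \subset \supp g$ for every $s \in \Z$, so that any bilinear single-scale estimate whose dependence on $g$ enters only through $\supp g$ and its $L^{1+\eps}$-averages applies, without loss, when $g$ is replaced by $g_s$. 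The single-scale bounds underlying \cite{CoCuDPOu} have precisely this bilinear, monotone form: combining the Fourier decay hypothesis \eqref{assker} with a Littlewood--Paley frequency-localization decomposition, and interpolating against the trivial $L^1 \times L^\infty$ bound, one obtains, for $f, g$ supported on $Q_0$ and compatible with the stopping,
\[
|\l K_s * f, g_s\r| \lesssim 2^{-c\eps|s - s_0|}\,|Q_0|\,\l f\r_{1+\eps, Q_0}\l g\r_{1+\eps, Q_0}, \qquad s_0 := \log_2 \ell(Q_0),
\]
for some absolute $c > 0$. Summing the geometric series in $s$ produces the factor $1/\eps$ in the main term of the recursive estimate.

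The only genuinely new ingredient relative to \cite{CoCuDPOu} is the linearization step, which reduces the analysis of the maximal truncation to the single-scale analysis already carried out there. The main obstacle is to ensure that this reduction is quantitatively faithful: the linearized test functions $g_s$ must be controlled uniformly in $s$ (and in the measurable choice of $t_1(\cdot), t_2(\cdot)$) by the original $g$, so that the single-scale estimate, and especially its crucial $c\eps$-exponential decay rate, survives the substitution $g \mapsto g_s$ without any quantitative loss --- only then does the geometric summation in $s$ convert that rate into the linear $1/\eps$ growth of the sparse norm asserted by the theorem.
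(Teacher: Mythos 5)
Your proposal hinges on two claims, both of which fail. After linearizing $T_\star$ as $\sum_s \l K_s*f, g_s\r$ with $g_s(x) = \sigma(x)g(x)\ind_{(t_1(x),t_2(x)]}(s)$, you assert that the single-scale bounds from \cite{CoCuDPOu} "apply, without loss, when $g$ is replaced by $g_s$", since they depend on $g$ only through $\supp g$ and its $L^{1+\eps}$-averages. This is not the case. The $(1+\eps,1+\eps)$ sparse bound requires \emph{two} localized estimates (cf.\ \eqref{lest}): one exploiting cancellation of the Calder\'on--Zygmund bad part of $f$, and a second exploiting cancellation of the bad part of the \emph{second} argument. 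The latter rests on regularity of the adjoint kernel in the integration variable, so that the mean-zero atoms of $g$ can be paired against a difference of kernels. After linearization the adjoint kernel is $K_s(x-y)\,\ind_{(t_1(x),t_2(x)]}(s)$, which is completely irregular in $x$ because $t_1,t_2$ are arbitrary measurable functions; that argument collapses. The paper circumvents this obstacle precisely by \emph{not} reducing to a linearized single-scale operator: Lemma \ref{bbestimate2} instead controls the pointwise oscillation of the locally restricted maximal truncation over a stopping cube via \cite[Lemma 2.1]{HRT} (see \eqref{osclemma}), which is what makes the pairing with the mean-zero pieces of $g$ possible. This oscillation estimate is the genuinely new ingredient that your proposal misses, and "the only genuinely new ingredient is the linearization step" undersells the difficulty by exactly this point.

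The claimed single-scale bound $|\l K_s*f, g_s\r| \lesssim 2^{-c\eps|s-s_0|}|Q_0|\l f\r_{1+\eps,Q_0}\l g\r_{1+\eps,Q_0}$ is also false: there is no gain as $s$ decreases below $s_0$, and the $\eps^{-1}$ in Theorem \ref{theoremRK} does not come from a geometric series in the spatial scale. In the paper's argument all spatial scales between the stopping level and $s_{Q_0}$ are treated simultaneously inside a localized bilinear estimate, and the decay lives in the Littlewood--Paley \emph{frequency} index $j$ with bandwidth $\Delta$: Lemmas \ref{kernorms1} and \ref{kernorms3} give $\|[K^j]\|_{\mathsf{Dini}}\lesssim 1+\Delta j$ against $\|[K^j]\|_{2,\star}\lesssim 2^{-\alpha\Delta(j-1)/2}$, and the $\eps^{-1}$ is produced by optimizing $\Delta\sim \eps^{-1}\alpha^{-1}$ in the interpolation of Lemma \ref{ppestimate}. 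In particular Lemma \ref{kernorms3}, an $L^2$ bound on the \emph{maximal} truncation of each Littlewood--Paley piece going beyond the untruncated estimate \eqref{untruncest}, is itself a nontrivial step that your proposal does not address.
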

Theorem \ref{theoremRK} entails immediately a variety of novel corollaries involving weighted norm inequalities for the maximally truncated operators $T_\star$. In addition to, for instance, those obtained by suitably applying the points of Proposition \ref{general},
 we also   detail the   quantitative estimates below, whose proof will be given in Section \ref{sec:quantitative}.
\begin{theorem}\label{cor}
Let $T$ be a sublinear operator satisfying the sparse bound \eqref{Psi}
with $\Psi (t)\leq Ct$.
\begin{itemize}
\item[1.] For any $1<p<\infty$,
\[
\|T \|_{L^p(w)}\lesssim   [w]_{A_p}^{\frac 1p}([w]_{A_p}^{\frac 1{p'}}+[\sigma]_{A_\infty}^{\frac 1p}) \max\{[\sigma]_{A_\infty}, [w]_{A_\infty}\}
\]with implicit constant possibly depending on $p$ and dimension $d$;
in particular,
\begin{equation}
\label{quadest}
\|T \|_{L^p(w)}\lesssim [w]_{A_p}^{2\max\{1,\frac 1{p-1}\}}.
\end{equation}
\item[2.] The  Fefferman-Stein type inequality
\[
\|T f\|_{L^p(w)}\lesssim p^2 (p')^{\frac 1{p}} (r')^{1+\frac 1{p'}}\|f\|_{L^p(M_r w)}, \,\, r<p
\]holds with implicit constant possibly depending on      $d$ only.
\item[3.] The   $A_q$-$A_\infty$ estimate\begin{equation}
\label{estuntcase}
\|T f\|_{L^p(w)}\lesssim [w]_{A_q}^{\frac 1p} [w]_{A_\infty}^{1+\frac 1{p'}}\|f\|_{L^p(w)},
\end{equation}
holds for $q<p$ and $w\in A_q$, with implicit constant possibly depending on $p,q$ and   $d$ only.
\item[4.] The following Coifman-Fefferman type inequality  \[
\|T f \|_{L^p(w)}\lesssim \frac{[w]_{A_\infty}^2}{\varepsilon}\|M_{1+\varepsilon} f\|_{L^p( w)}
\]
holds for all $\varepsilon>0$ with implicit constant possibly depending on $p$ and    $d$ only.
\end{itemize}
\end{theorem}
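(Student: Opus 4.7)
\medskip
\noindent\textbf{Proof proposal for Theorem~\ref{cor}.}

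All four statements will be derived from a single master sparse-form estimate, applied to the bilinear pairing $\langle Tf,g\rangle$ and then specialized by choosing the sparse parameter $\eps$ in different ways. The plan starts from duality: for fixed $1<p<\infty$ and $w\in A_p$ with $\sigma:=w^{1-p'}$, we write $\|Tf\|_{L^p(w)}=\sup|\langle T(f\sigma),hw\rangle|$ over normalized $h\in L^{p'}(\sigma)$ and apply the hypothesis $\Psi(t)\le Ct$ with $t=\eps^{-1}$, producing, for any $\eps\in(0,p-1)$ and some $\tfrac12$-sparse $\mathcal S$,
\[
|\langle T(f\sigma),hw\rangle|\;\le\;\frac{C}{\eps}\sum_{Q\in\mathcal S}|Q|\,\l f\sigma\r_{1+\eps,Q}\l hw\r_{1+\eps,Q}.
\]

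Paragraph two: the core technical step is to estimate the sparse form on the right by a product of weighted norms. This proceeds by the standard two-weight testing argument for sparse forms (as in Hyt\"onen--P\'erez and its subsequent refinements): factor $\l f\sigma\r_{1+\eps,Q}=\l (f\sigma^{1/(1+\eps)'})\cdot\sigma^{1/(1+\eps)}\r_{1+\eps,Q}$, apply H\"older in the inner integral to split into a $\sigma$-weighted piece and a pure $\sigma$-average, and symmetrically for $\l hw\r_{1+\eps,Q}$. The $\sigma$-averages of $\sigma$ and $w$-averages of $w$ raised to power $1+\eps$ are then controlled by $1$-averages via the sharp reverse H\"older inequality, provided $\eps\le c_d\min\{[w]_{A_\infty}^{-1},[\sigma]_{A_\infty}^{-1}\}$. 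Combining with the sparsity and the standard $L^p(\sigma)$--$L^{p'}(w)$ bound for the dyadic maximal function yields
\[
\sum_{Q\in\mathcal S}|Q|\,\l f\sigma\r_{1+\eps,Q}\l hw\r_{1+\eps,Q}\;\lesssim\;[w]_{A_p}^{1/p}\bigl([w]_{A_\infty}^{1/p'}+[\sigma]_{A_\infty}^{1/p}\bigr)\|f\|_{L^p(w)}\|h\|_{L^{p'}(\sigma)}.
\]
Choosing $\eps^{-1}\simeq\max\{[w]_{A_\infty},[\sigma]_{A_\infty}\}$ and combining with the $\eps^{-1}$ prefactor from the hypothesis yields Part 1; the quadratic bound \eqref{quadest} follows upon inserting $[w]_{A_\infty}\le[w]_{A_p}$ and $[\sigma]_{A_\infty}\le[w]_{A_p}^{1/(p-1)}$.

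Part 4 is essentially immediate: do not optimize $\eps$, and use the pointwise bound $\l f\r_{1+\eps,Q}\le\inf_{x\in Q}M_{1+\eps}f(x)$ together with the known sparse-form estimate $\sum_{Q}|Q|\inf_Q M_{1+\eps}f\,\cdot\,\l hw\r_{1,Q}\lesssim[w]_{A_\infty}\|M_{1+\eps}f\|_{L^p(w)}\|h\|_{L^{p'}(\sigma)}$, losing one factor of $[w]_{A_\infty}$ from the reverse H\"older step on $w$ (but not on $\sigma$). Part 2 is obtained by specializing to $\eps=r-1$: the $(1+\eps)$-average on the $f$-side is dominated pointwise by $M_r f$ with no reverse H\"older invoked, so only the $h$-side contributes a $(r')^{1+1/p'}$ factor, producing the stated Fefferman--Stein bound. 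Part 3 follows from Part 1 by replacing $p$ by $q<p$ in the two-weight testing step and tracking that the weak $A_q$ constant enters only to the power $1/p$ while the remaining factors are expressed in $[w]_{A_\infty}$.

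The main obstacle is calibrating $\eps$ simultaneously against $[w]_{A_\infty}$ and $[\sigma]_{A_\infty}$ so that the sharp reverse H\"older hypothesis is satisfied in both reverse directions without losing the linear $\eps^{-1}$ growth of $\Psi$, which is what produces the quadratic (rather than cubic) behavior in Part 1. Once the sparse form bound is written in the symmetric form above, each of the four corollaries is a matter of reading off the correct specialization, but the accounting of constants between $[w]_{A_p}$, $[w]_{A_\infty}$, $[\sigma]_{A_\infty}$ and the ``number of $\eps^{-1}$'s'' has to be done with some care.
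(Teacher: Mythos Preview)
Your Part 1 outline is correct and matches the paper's route (which delegates the computation to \cite{Li2}): apply reverse H\"older on both $w$ and $\sigma$ to reduce the $(1+\eps,1+\eps)$ averages to $(1,1)$ averages, invoke the known sharp $A_p$--$A_\infty$ sparse form bound, and optimize $\eps\sim\max\{[w]_{A_\infty},[\sigma]_{A_\infty}\}^{-1}$.

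The other three parts have genuine gaps. Most seriously, in Part 2 you have misread the statement: the right-hand side is $\|f\|_{L^p(M_r w)}$, where $M_r$ acts on the \emph{weight}, not on $f$; bounding $\langle f\rangle_{r,Q}$ by $\inf_Q M_r f$ heads toward a Coifman--Fefferman inequality (Part 4), not a Fefferman--Stein one. Since $w$ is an arbitrary weight here, no reverse H\"older is available, and the paper instead runs a two-weight Orlicz-bump argument, exploiting that $\langle w^r\rangle_Q^{1/r}\le\inf_Q M_r w$ makes the relevant bump condition between $w$ and $M_r w$ hold automatically. In Part 4 you conflate the sparse exponent with the $\varepsilon$ appearing in $M_{1+\varepsilon}$; the paper keeps them distinct, choosing the sparse exponent $r$ with $r'\sim \varepsilon^{-1}[w]_{A_\infty}$ so that simultaneously $r<1+\varepsilon$ (hence $\langle f\rangle_{r,Q}\le\inf_Q M_{1+\varepsilon}f$) and reverse H\"older applies to $w$; the second factor of $[w]_{A_\infty}$ then comes from a Carleson embedding with respect to the measure $w$. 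Your ``do not optimize $\eps$'' approach accounts for only one power of $[w]_{A_\infty}$ and breaks down whenever $\varepsilon$ is not small relative to $[w]_{A_\infty}^{-1}$. In Part 3, ``replace $p$ by $q$ in the testing step'' does not produce an $L^p(w)$ bound; the paper instead treats the $A_q$ condition as a bumped $A_p$ condition, splitting $\langle f\rangle_{r,Q}$ via Orlicz--H\"older with the Young function $C(t)=t^{p/(r(q-1))}$ engineered so that exactly $\langle w\rangle_Q\langle w^{-1/(q-1)}\rangle_Q^{q-1}$ appears to the power $1/p$, while the factor $[w]_{A_\infty}^{1+1/p'}$ again comes from the sparse constant $r'\sim[w]_{A_\infty}$ together with a Carleson embedding in the $w$-measure.
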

\begin{remark}
Take $\Omega:S^{d-1}\to \mathbb C$ with $\|\Omega\|_\infty\leq 1$ and  having vanishing integral on $S^{d-1}$,
and consider the associated truncated  integrals and their maximal function
\begin{equation}
\label{Tomegas}
T_{\Omega,\delta} f(x): =  \int_{\delta<|t|<\frac1\delta}  f(x-t) \frac{\Omega(t/|t|)}{|t|^d} \, \d t, \qquad
T_{\Omega,\star} f(x) : =\sup_{\delta>0}\left|  T_{\Omega,\delta} f(x)\right|, \qquad x\in \R^d.
\end{equation}
 It is well known-- for instance, see the recent contribution \cite[Section 3]{HRT}-- that
\[
T_{\Omega,\star} f(x)
 \lesssim  \mathrm{M} f(x) +  T_{\star} f(x), \qquad x \in \R^d\]
 with $T_\star $ being defined as in \eqref{deftstar} for a suitable choice of $\{K_s: s\in \mathbb Z\}$ satisfying \eqref{assker} with $\alpha= \frac1d$.  As $ \| \mathrm M  \|_{\left(1 , 1 \right),\mathsf{sparse}} \lesssim 1$, a corollary of Theorem \ref{theoremRK} is that
  \begin{equation}
\label{roughOmega}
 \| T_{\Omega,\star}  \|_{\left(1+\eps , 1+\eps\right),\mathsf{sparse}}  \lesssim \frac{1}{\eps}\end{equation}
 as well. The main result of \cite{CoCuDPOu} is  the  stronger control
\begin{equation}
\label{nontrunc}
\sup_{\delta>0}  \| T_{\Omega,\delta}  \|_{\left(1 , 1+\eps\right),\mathsf{sparse} } \lesssim  \frac{1}{\eps}.
\end{equation}
The above estimate, in particular, is stronger that the uniform weak type $(1,1)$ for the operators $T_{\Omega,\delta}$, a result originally due to Seeger \cite{Seeger}. As the weak type $(1,1)$ of $T_{\Omega,\star}$ under no additional smoothness assumption on $\Omega$ is a difficult open question, estimating the $(1,1+\eps)$ sparse norm of $ T_{\Omega,\star}$ as in \eqref{nontrunc} seems out of reach.

The study of sharp weighted norm inequalities for $T_{\Omega,\delta}$ (the uniformity in $\delta$ is of course relevant here) was initiated in  the recent article \cite{HRT} by Hyt\"onen, Roncal and Tapiola. Improved quantifications    have been obtained  in \cite{CoCuDPOu} as a consequence of the domination result \eqref{nontrunc}, and further weighted estimates-- including a Coifman-Fefferman type inequality, that is a norm control of $T_{\Omega,\delta}$ by $\mathrm{M}$ on all $L^p(w)$, $0<p<\infty$ when $w\in A_\infty$-- have been later derived from \eqref{nontrunc} in the recent preprint by the third named author, P\'erez, Roncal and Rivera-Rios \cite{LPRR}.

Although \eqref{roughOmega} is a bit weaker than \eqref{nontrunc}, we see from comparison of \eqref{quadest} from Theorem \ref{cor} with the results of \cite{CoCuDPOu,LPRR} that the quantification of the $L^2(w)$-norm dependence on  $[w]_{A_2}$ entailed by the two estimates  is the same-- quadratic; on the contrary, for $p\neq 2$, \eqref{nontrunc} yields the  better estimate $\|T_{\Omega,\delta}\|_{L^p(w)}\lesssim [w]_{A_p}^{p'}.$ We also observe that the proof of the mixed estimate \eqref{estuntcase} actually yields the following estimate for the non-maximally truncated operators, improving the previous estimate given in \cite{LPRR}
\[
\|T_{\Omega,\delta}f\|_{L^p(w)}\lesssim   [w]_{A_q}^{\frac 1p} [w]_{A_\infty}^{\frac 1{p'}}\|f\|_{L^p(w)}.
\]
Finally, we emphasize that \eqref{roughOmega} also yields a precise dependence on $p$ of the unweighted $L^p$ operator norms. Namely, from the sparse domination, we get 
\begin{equation}\label{eq:unweightedp}
\|T_{\Omega,\star}\|_{L^p(\R^d)\to L^{p,\infty}(\R^d) }\lesssim   \max\{p,p'\}, \qquad 
\| T_{\Omega,\star}\|_{L^p(\R^d)}\lesssim pp'\max\{p,p'\}
\end{equation} with absolute dimensional implicit constant,
which improves on the implicit constants in \cite{DR}. Moreover, we note that the main result of  \cite{LuquePR} implies that if \eqref{eq:unweightedp} is sharp, then our quantitative weighted estimate \eqref{quadest} is also sharp. 
\end{remark}

\begin{remark}
Comparing Theorem \ref{theoremRK} with the sparse domination formula for commutators of Calder\'on-Zygmund operators with $BMO$ symbols \cite{LOR}, all our weighted corollaries hold for commutators as well, with the help of John-Nirenberg inequality.
\end{remark}

 \subsection{Matrix  weighted estimates for vector valued rough singular integrals} Let $(e_\ell)_{\ell=1}^n$,    $\l\cdot ,\cdot \r_{\mathbb F^n}$ and $|\cdot|_{\mathbb F^n}$ be the canonical basis, scalar product and   norm on $\mathbb{F}^n$ over $\mathbb F$, where    $\mathbb F\in \{\R,\mathbb C\}$.  
  A recent trend in Harmonic Analysis-- see, among others, \cite{BCTW,BPW,CuDPOu2,HPV,NPTV}-- is the study of quantitative  matrix weighted norm inequalities for the canonical extension of the (integral)   linear operator $T$
\[
\left\l T   f(x),e_\ell \right\r_{\mathbb F^n}:=
\left\l T\otimes {\mathrm{Id}}_{\mathbb F_n} f  (x),e_\ell \right\r_{\mathbb F^n} =  T \big(\l f, e_{\ell}\r_{\mathbb F^n}\big) (x), \qquad x\in \mathbb \R^d \] 
to $\mathbb F^n$-valued functions $f$.  In Section \ref{svv} of this paper, we introduce an  $L^p$, $p>1$,  version of the  convex body averages  first brought  into the sparse domination context by Nazarov, Petermichl, Treil and Volberg \cite{NPTV}, and use them to produce a vector valued version of Theorem \ref{theoremRK}. As a corollary, we obtain quantitative matrix weighted estimates for the maximal truncated vector valued extension  of  the rough singular integrals $T_{\Omega,\delta}$ from \eqref{Tomegas}. In fact, the next corollary is a special case of the more precise Theorem \ref{theoremW} from Section \ref{svv}.\setcounter{thms}{5}
\begin{corollary}
\label{corollaryW} Let  $W$  be a positive semidefinite and locally integrable $\mathcal L(\mathbb{F}^{n})$-valued function on $\R^d$ and  $T_{\Omega,\delta}$ be as in  \eqref{Tomegas}.   Then  
\begin{equation}
\label{wintroeq}\left\| \sup_{\delta>0} \big|  W^{\frac12}  T_{\Omega,\delta}   f  \big|_{\mathbb F^n}\right\|_{L^2(\R^d)} \lesssim  [W]_{A_2}^{\frac52}\left\|   \big|  W^{\frac12}      f \big|_{\mathbb F^n}\right\|_{L^2(\R^d)}\end{equation} with implicit  constant   depending on $d,n$ only,
where the \emph{matrix $A_2$ constant} is given by  \[
[W]_{A_2}:=\sup_{Q \textrm{\emph{ cube of }} \R^d}\left\| \left(\frac{1}{|Q|}\int_{Q} W(x) \, \d x\right)^{\frac12} \left(\frac{1}{|Q|}\int_{Q} W^{-1}(x) \, \d x\right)^{\frac12}\right\|_{\mathcal L(\mathbb{F}^n)}^2.
\]
\end{corollary}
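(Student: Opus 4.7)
The plan is to deduce Corollary \ref{corollaryW} from a convex-body valued analogue of Theorem \ref{theoremRK}, followed by an optimization in $\eps$. Concretely, I would first establish the convex-body sparse bound
\[
\big\|T_{\Omega,\star}\big\|_{(1+\eps,1+\eps),\mathsf{sparse},\mathrm{cb}}\lesssim \frac{1}{\eps}\qquad \forall\,\eps\in(0,1),
\]
where the sparse form is built on $L^{1+\eps}$ convex-body averages of $\mathbb F^n$-valued functions in the sense introduced in Section \ref{svv}; the matrix weighted inequality \eqref{wintroeq} is then obtained by testing this form against $W^{\frac12}$.

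To prove the convex-body sparse bound, I would lift the scalar proof of Theorem \ref{theoremRK} to the $\mathbb F^n$-valued setting. The single-band $L^\infty$ and $L^2$ estimates implicit in \eqref{assker} are convolution bounds which transfer to the vector-valued setting with identical constants, and the stopping time producing the sparse collection can be carried out with convex-hull averages in place of absolute values, in the spirit of Nazarov, Petermichl, Treil and Volberg \cite{NPTV}; the key new technical ingredient is the $L^{1+\eps}$ generalization of their $p=1$ framework, developed in Section \ref{svv}. For the conversion step, duality and John's ellipsoid (at a cost depending only on $n$) reduce \eqref{wintroeq} to bounding a scalar sparse form against a product of matrix $A_2$-adapted averages of $W^{\frac12}f$ and $W^{-\frac12}g$; running the Hyt\"onen-P\'erez-Treil-Volberg $A_2$-splitting at exponent $1+\eps$ then produces a weighted estimate of the order $[W]_{A_2}^{\frac32}\cdot \eps^{-1}$, in analogy with Proposition \ref{general}(2). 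Choosing $\eps\sim [W]_{A_2}^{-1}$ yields the claimed $[W]_{A_2}^{\frac52}$ bound.

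The main obstacle is the first step: producing the convex-body sparse bound \emph{without losing any growth in $\eps^{-1}$ beyond the one already present scalarly}. This requires verifying that each ingredient of the Fourier-decay layer-cake decomposition behind Theorem \ref{theoremRK}---in particular the interpolation between the $L^\infty$ and $L^2$ single-band bounds implicit in \eqref{assker}, and the truncated maximal operator reduction that brings $T_\star$ into play---is compatible with convex-hull $L^{1+\eps}$ averages. Once this compatibility is established via the $L^p$ convex-body formalism of Section \ref{svv}, the remaining optimization and the John's ellipsoid accounting follow well-traveled lines from \cite{NPTV,HPV}.
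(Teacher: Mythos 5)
Your proposal follows the same route the paper takes: lift the scalar sparse bound of Theorem \ref{theoremRK} to an $\mathbb F^n$-valued, convex-body sparse domination at exponent $1+\eps$ (Theorem \ref{theoremABSvv}), test the sparse form against $W^{\frac12}g$ and $W^{-\frac12}f$, run the matrix $A_2$-splitting at that exponent, and optimize by taking $\eps\sim[W]_{A_2}^{-1}$. The power bookkeeping is also correct: the sparse form contributes $\sqrt{[W,V]_{A_2}[W]_{A_\infty}[V]_{A_\infty}}\le[W]_{A_2}^{3/2}$ and $\eps^{-1}$ contributes another $[W]_{A_2}$, giving $[W]_{A_2}^{5/2}$.

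Two points where your outline deviates from (or glosses over) what the paper actually does are worth flagging. First, you identify the ``main obstacle'' as re-verifying that the interpolation between the $L^\infty$ and $L^2$ single-band bounds, and the maximal truncation reduction, are compatible with convex-hull $L^{1+\eps}$ averages. The paper avoids this entirely: Theorem \ref{theoremABSvv} takes the \emph{scalar} localized estimates \eqref{lest} as its sole hypothesis, and the vector-valued iteration is reduced to scalar estimates through $\mathrm{GL}(\R^n)$ renormalization of the local convex bodies plus John's ellipsoid (Lemma \ref{simplelemma}, Lemma \ref{vvmaxth}); no vector-valued re-proof of the Fourier-decay/Dini interpolation is needed. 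Second, the operator $\sup_{\delta>0}|W^{\frac12}T_{\Omega,\delta}(V^{\frac12}f)|_{\R^n}$ is not bilinear, so a convex-body sparse bound for ``$T_{\Omega,\star}$'' as such is not directly meaningful: the paper linearizes the supremum, applies the sparse bound to the linearized truncations $T[K]_{t_1}^{t_2}\otimes\mathrm{Id}_{\R^n}$, and handles the leftover Hardy--Littlewood piece from $T_{\Omega,\star}\lesssim \mathrm M+T_\star[K]$ by a two-weight Christ--Goldberg maximal function estimate with bound $[W,V]_{A_2}^{1/2}[V]_{A_\infty}^{1/2}$, which is subsumed by the main term. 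These are details your sketch omits rather than errors that would derail the argument, so the proposal is essentially sound, but the ``compatibility re-verification'' you anticipate is not actually required.
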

As the left hand side of \eqref{wintroeq} dominates the matrix weighted norm of the vector valued maximal operator first studied by Christ and Goldberg in \cite{CG},
the finiteness of  $[W]_{A_2}$ is actually necessary for the estimate to hold.  
To the best of the authors' knowledge, Theorem \ref{theoremW}  has no precedessors, in the sense that no matrix weighted norm inequalities for vector rough singular integrals were known before, even in qualitative form. At this time we are unable to assess whether the power $\frac52$ appearing in \eqref{wintroeq} is optimal. For comparison, if the angular part $\Omega $ is H\"older continuous, the currently best known result \cite{NPTV} is that \eqref{wintroeq} holds with power $\frac32$; see also \cite{CuDPOu2}.
\setcounter{thms}{2}

\subsection{Strategy of proof of the main results}We will obtain  Theorem \ref{theoremRK} by an application of an abstract sparse domination principle, Theorem \ref{theoremABS} from Section \ref{S2}, which is a modification of \cite[Theorem C]{CoCuDPOu}. At the core of our approach lies a special configuration of stopping cubes, the so-called \emph{stopping collections} $\mathcal Q$, and their related atomic spaces. The necessary definitions, together with a useful interpolation principle for the atomic spaces, appear in Section \ref{sectionSC}.
\setcounter{other}{4}
 In essence, Theorem \ref{theoremABS}  can be summarized by the inequality
\[
  \| T_\star  \|_{\left(p_1,p_2\right),\mathsf{sparse}}
\lesssim
 \| T_\star  \|_{\mathcal L(L^{2}(\R^d)) } +   \sup_{\mathcal Q,t_1,t_2}\left( \|\mathcal Q _{ t_1}^{t_2}\|_{\dot{\mathcal X}_{p_1}  \times   \mathcal Y_{p_2} } + \|\mathcal Q _{ t_1}^{t_2}\|_{   \mathcal Y_{\infty} \times \dot{\mathcal X}_{p_2} }\right)\]
where the supremum is taken over all stopping collections $\mathcal Q$ and all   measurable linearizations  of the truncation parameters $t_1,t_2$, and $\mathcal Q _{ t_1}^{t_2}$ are suitably adapted localizations of (the adjoint form to the linearized versions of) $ T_\star$. In Section \ref{S3}, we prove the required uniform estimates for the localizations $\mathcal Q _{ t_1}^{t_2}$ coming from Dini-smooth kernels.
 The proof of Theorem \ref{theoremRK} is given in Section \ref{S4}, relying upon the estimates of Section \ref{S3} and the   Littlewood-Paley decomposition of the convolution kernels \eqref{assker} whose first appearance dates back to  \cite{DR}.

\begin{remark} We remark that while this article was being finalized, an alternative proof of  \eqref{nontrunc} was given by Lerner \cite{Ler2017}. It is of interest whether the strategy of \cite{Ler2017}, relying on bumped bilinear grand local maximal functions, can be applied towards estimate \eqref{roughOmega} as well.
\end{remark}

\subsection*{Notation}
 With $q'=\frac{q}{q-1}$ we indicate the Lebesgue dual exponent to $q\in (1,\infty)$, with the usual extension $1'=\infty$, $ \infty'=1$.
The center and the (dyadic) scale of  a cube $Q\in \R^d$ will be denoted by  $c_Q$   and  $s_Q $ respectively, so that $|Q|=2^{ s_Q d}$.
We use the notation
\[
\mathrm{M}_{p}(f)(x) =\sup_{Q \subset \R^d} \l f \r_{p,Q} \cic{1}_Q(x)
\] for the  $p$-Hardy Littlewood maximal function and write $\mathrm{M}$ in place of $\mathrm{M}_1$.
Unless otherwise specified, the almost inequality signs $\lesssim$  imply   absolute dimensional constants which may  be  different at each occurrence.\subsection*{Acknowledgments}
This work was completed during F.\ Di Plinio's stay at the Basque Center for Applied Mathematics (BCAM), Bilbao as a visiting fellow. The author gratefully acknowledges the kind hospitality of the staff and researchers at BCAM and in particular of Carlos P\'erez.
The authors also    want to thank Jos\'e Conde-Alonso, Amalia Culiuc,\ Yumeng Ou and Ioannis Parissis for several inspiring discussions on sparse domination principles.
\section{Stopping collections and interpolation in localized spaces} \label{sectionSC}
The notion of   stopping collection $\mathcal Q$ with top  the (dyadic) cube  $Q$ has been introduced in \cite[Section 2]{CoCuDPOu}, to which we send for details. Here, we recall that such a $\mathcal Q$ is a collection of pairwise disjoint   dyadic cubes contained in $3Q$ and satisfying suitable Whitney type properties. More precisely,
\begin{align}
\label{stop1}
& \bigcup_{L\in \mathsf{c} \mathcal Q} 9L   \subset
\mathsf{sh}\mathcal Q:=\bigcup_{L\in \mathcal Q} L \subset 3Q, \qquad \mathsf{c} \mathcal Q:= \{L\in \mathcal Q: 3L \cap 2Q\neq \emptyset \} ;
\\ \label{stop2}
&
  L,L'\in \mathcal Q, \, L \cap L'\neq \emptyset \implies L=L';
\\ \label{stop3}
&
L\in \mathcal Q, L' \in N(L)  \implies |s_L-s_{L'}|\leq 8, \qquad N(L):= \{ L'\in \mathcal Q: 3L\cap 3L' \neq \emptyset \}.
\end{align}
A consequence of \eqref{stop3} is that the cardinality of $N(L)$ is bounded by an absolute constant.

 The spaces $\mathcal Y_p(\mathcal Q), \mathcal X_p(\mathcal Q), \dot{\mathcal X}_p(\mathcal Q) $ have  also been defined in \cite[Section 2]{CoCuDPOu}: here we recall that $
\mathcal Y_p(\mathcal Q)$ is the subspace of $L^p(\R^d)$ of functions satisfying
\begin{equation}\label{Yq}
\begin{split}
& \supp f \subset 3Q, \\ & \infty>
\|f\|_{\mathcal Y_p(\mathcal Q) }:=\begin{cases} \displaystyle \max\left\{\left\|f\cic{1}_{\R^d\setminus  \mathsf{sh} \mathcal Q }\right\|_{\infty} ,\,\sup_{L\in \mathcal Q}\,\inf_{x\in \widehat L } \mathrm{M}_p f(x) \right\} & p<\infty \\ \|f \|_{\infty} & p=\infty
\end{cases} \end{split} \end{equation}
where   $\widehat L$ stands for the (non-dyadic) $2^{5}$-fold dilate of $L$,
and that  ${\mathcal X_p(\mathcal Q) }$ is the subspace of $\mathcal Y_p(\mathcal Q)$ of functions satisfying
\begin{equation}
\label{bell}
b=\sum_{L\in \mathcal Q} b_L, \qquad \supp  b_L \subset L.
\end{equation}
Finally, we write $b\in \dot{\mathcal X}_{p}(\mathcal Q)$ if $b\in  {\mathcal X}_{p}(\mathcal Q)$ and each $b_L$ has mean zero.  We will omit $(\mathcal Q)$ from the subscript of the norms whenever the stopping collection $\mathcal Q$ is clear from context.

There is a natural interpolation procedure involving the $\mathcal Y_p$-spaces. We do not strive for the most general result but restrict ourselves to proving a significant example, which is also of use to us in the proof of Theorem \ref{theoremRK}.
\begin{proposition} \label{interprop}
Let $B$ be a bisublinear form  and $A_1,A_2$ be positive constants such that the estimates
\[
\begin{split}
|B(b,f)| \leq A_1 \|b_1\|_{\dot{\mathcal X}_1(\mathcal Q) } \|f\|_{\mathcal Y_1(\mathcal Q) }, \qquad |B(g_1,g_2)| \leq A_2 \|g_1\|_{{\mathcal Y}_2(\mathcal Q) } \|g_2\|_{\mathcal Y_2(\mathcal Q) }
\end{split}
\]
Then for all $0<\eps< 1$
\[
\begin{split}
|B(f_1,f_2)| \lesssim (A_1)^{1-\eps} (A_2)^{\eps}  \|f_1\|_{\dot{\mathcal X}_p(\mathcal Q) } \|f_2\|_{\mathcal Y_p(\mathcal Q) }, \qquad p=\frac{2}{2-\eps}. \end{split}
\]
\end{proposition}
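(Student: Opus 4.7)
The plan is a Calder\'on--Zygmund type decomposition at a common threshold $\lambda\geq 1$, in the Marcinkiewicz spirit, followed by optimization of $\lambda$. After normalizing $\|f_1\|_{\dot{\mathcal X}_p(\mathcal Q)}=\|f_2\|_{\mathcal Y_p(\mathcal Q)}=1$, I would decompose $f_2=h_2+b_2$ by the pointwise cutoff $h_2:=f_2\mathbf 1_{\{|f_2|\leq\lambda\}}$. For the atomic $f_1=\sum_{L\in\mathcal Q}f_{1,L}$, the decomposition must preserve the mean-zero structure on each $L$: setting $c_L:=|L|^{-1}\int_L f_{1,L}\mathbf 1_{\{|f_{1,L}|>\lambda\}}$, define $b_{1,L}:=f_{1,L}\mathbf 1_{\{|f_{1,L}|>\lambda\}}-c_L\mathbf 1_L$ and $h_{1,L}:=f_{1,L}-b_{1,L}$, so that $\int h_{1,L}=\int b_{1,L}=0$ and consequently both $h_1=\sum_L h_{1,L}$ and $b_1=\sum_L b_{1,L}$ again belong to $\dot{\mathcal X}(\mathcal Q)$.

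The key local estimates I would verify are $\|h_j\|_{\mathcal Y_2}\lesssim\lambda^{1-p/2}$, $\|h_1\|_{\dot{\mathcal X}_1}+\|h_2\|_{\mathcal Y_1}\lesssim 1$, and $\|b_1\|_{\dot{\mathcal X}_1}+\|b_2\|_{\mathcal Y_1}\lesssim\lambda^{1-p}$. The $\mathcal Y_2$ bound on $h_j$ follows from the pointwise inequality $|h_j|^2\leq\lambda^{2-p}|f_j|^p$ combined with $M_p f_j(x_L)\leq 1$ at a test point $x_L\in\widehat L$ given by the $\mathcal Y_p$-hypothesis; the $\mathcal Y_1$ bounds on $h_j$ use the trivial embedding $\mathcal Y_p\subset\mathcal Y_1$ together with $|c_L|\lesssim\lambda^{1-p}\leq 1$; the bounds on $b_j$ use the dual pointwise inequality $|b_j|\leq\lambda^{1-p}|f_j|^p$ on $\{|f_j|>\lambda\}$, and, for $b_1$, a choice of $x_L\in\widehat L\setminus\mathsf{sh}\,\mathcal Q$, possible because the $L\in\mathcal Q$ are pairwise disjoint and $\widehat L$ is much larger than $L$. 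Bisublinearity then yields the four-term split
\[
|B(f_1,f_2)|\leq |B(h_1,h_2)|+|B(h_1,b_2)|+|B(b_1,h_2)|+|B(b_1,b_2)|.
\]
I would bound the first term by the $A_2$-hypothesis (both arguments in $\mathcal Y_2$), getting $\lesssim A_2\lambda^{2-p}$; the other three by the $A_1$-hypothesis, applicable because both $h_1$ and $b_1$ inherit the mean-zero atomic structure by construction, contributing in total $\lesssim A_1\lambda^{1-p}$. Choosing $\lambda$ so as to balance the two resulting contributions produces the claimed interpolated bound.

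The main obstacle is ensuring that the good part $h_1$ inherits the atomic mean-zero structure of $f_1$; without it, the cross term $B(h_1,b_2)$ could not be handled by the $A_1$-hypothesis, and the $A_2$-hypothesis is not available there either, since the condition $|b_2|>\lambda$ on $\supp b_2$ precludes the interpolation inequality used for $h_2$ and no useful $\mathcal Y_2$-control of $b_2$ is available. The mean-correction by $c_L$ in the definition of $b_{1,L}$ is the single critical device that closes this gap, and it is precisely what distinguishes the atomic space $\dot{\mathcal X}_p$ from the ambient $\mathcal Y_p$.
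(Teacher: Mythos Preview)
Your approach is correct and matches the paper's proof essentially verbatim: the same Calder\'on--Zygmund split at level $\lambda$ (pointwise truncation of $f_2$, mean-corrected truncation of $f_1$ on each stopping cube so that both $h_1$ and $b_1$ remain in $\dot{\mathcal X}$), the same four-term bisublinear split with the $A_2$-hypothesis applied only to the good--good piece, and optimization in $\lambda$. One small caveat: the aside about selecting $x_L\in\widehat L\setminus\mathsf{sh}\,\mathcal Q$ is not generally valid (the shadow can cover $\widehat L$), but the bound $\|b_1\|_{\dot{\mathcal X}_1}\lesssim\lambda^{1-p}$ holds regardless via the pointwise estimate $|b_1|\lesssim\lambda^{1-p}|f_1|^p$ plus the Whitney property, and the paper simply asserts this estimate without further detail.
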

\begin{proof} We may assume $A_2< A_1$, otherwise there is nothing to prove. We are allowed to normalize  $A_1=1$.  Fixing now $0<\eps<  1$, so that $1<p <2$, it will suffice to prove the estimate
\begin{equation} \label{intermediate}
|B(f_1,f_2)| \lesssim  (A_2)^{\eps}
\end{equation}
for each pair $f_1 \in \dot{ \mathcal X}_p (\mathcal Q), $    $f_2\in \mathcal Y_p(\mathcal Q) $ with $\|f_1\|_{\dot{ \mathcal X}_p   }=\|f_2\|_{{ \mathcal Y}_p   }=1$ with implied constant depending on dimension only. Let $\lambda \geq 1$ to be chosen later. Using the notation $f_{>\lambda}:=f\cic{1}_{|f|>\lambda}$, we introduce
the decompositions
\[
f_1= g_1+ b_1, \quad b_1 :=\sum_{Q\in \mathcal Q}  \left( (f_1)_{>\lambda } -  \frac{1}{|Q|}\int_Q (f_1)_{>\lambda } \right) \cic{1}_Q, \qquad f_2= g_2+ b_2, \quad b_2 :=  (f_2)_{>\lambda }
\]
which verify the properties
\[
\begin{split}
&g_1 \in \dot{ \mathcal X}_2(\mathcal Q),\; \|g_1\|_{\dot{ \mathcal X}_p } \lesssim 1,\;\|g_1\|_{\dot{ \mathcal X}_2 } \lesssim \lambda^{1-\frac p2}, \qquad  b_1\in \dot{\mathcal X}_1(\mathcal Q), \; \|b_1\|_{\dot{\mathcal X}_1} \lesssim \lambda^{1-p}
\\ & \|g_2\|_{\dot{ \mathcal X}_2 } \lesssim \lambda^{1-\frac p2}, \qquad \|b_2\|_{{\mathcal X}_1}  \lesssim \lambda^{1-p}.
\end{split}
\]
We have used that $b_1$ is supported on the union of the cubes $Q\in \mathcal Q$ and has mean zero on each $Q$, and  therefore  $g_1$ has the same property,  given that $f_1 \in { \dot{ \mathcal X}}_p (\mathcal Q)$.
Therefore
\[
\begin{split}
|B(f_1,f_2)|&\leq |B(b_1,b_2)| + |B(b_1,g_2)| + |B(g_1,b_2)| + |B(g_1,g_2)|\\ & \leq   \|b_1\|_{\dot{\mathcal X}_1 } \|b_2\|_{\mathcal Y_1  } +  \|b_1\|_{\dot{\mathcal X}_1 } \|g_2\|_{\mathcal Y_1  } + \|g_1\|_{\dot{\mathcal X}_1 } \|b_2\|_{\mathcal Y_1  } + A_2\|g_1\|_{ \mathcal Y_2 } \|g_2\|_{\mathcal Y_2 }  \\ &\lesssim \lambda^{2-2p} + 2\lambda^{1-p} + A_2 \lambda^{2-p} \lesssim \lambda^{2-2p}(1+ A_{2}\lambda^{p})
\end{split}\]
which yields \eqref{intermediate} with the choice $\lambda= A_2^{-\frac 1p}$.
\end{proof}

\section{A sparse domination principle for maximal truncations}
\label{S2} We consider families of functions $[K]=\{K_s:s\in \mathbb Z\}$ satisfying
\begin{equation}
\label{assker2}
\begin{split} &
\supp K_s \subset\left\{(x,y)\in \R^d\times \R^d: |x-y|< 2^{s} \right\} ,
 \\
& \|[K]\| :=\sup_{s\in \mathbb Z} 2^{ sd}\sup_{x\in \R^d}  \left(\|K_s(x, \cdot)\|_{\infty} +   \|K_s(\cdot,x)\|_{\infty} \right)<\infty
\end{split}
\end{equation}
and associate to them the   linear operators
\begin{equation}
\label{tis}
T[K]f(x,t_1, t_2): = \sum_{t_1<s\leq t_2}   \int_{\R^d} K_s(x,y)f(y) \, \d y, \qquad x\in \R^d, \, t_1,t_2\in \mathbb Z
\end{equation}
and their sublinear maximal versions
\[
{T_\star}_{t_1}^{t_2}[K]f(x):=\sup_{t_1\leq \tau_1\leq \tau_2\leq  t_2}|T[K]f(x,\tau_1, \tau_2)|, \qquad T_\star[K]f(x) =   \sup_{t_1\leq   t_2}|T[K]f(x,t_1, t_2)|. \]
We assume   that there exists $1<r<\infty$ such that
\begin{equation}
\label{asstrunc}
\|[K]\|_{r,\star} :=  \|T_\star[K]\|_{L^{r}(\R^d)}  <\infty.
\end{equation}
For pairs of bounded measurable functions $t_1,t_2:\R^d\to \mathbb Z$, we also consider the linear operators
\begin{equation}
\label{tisl}
T[K]_{t_1}^{t_2} f(x) := T[K]f(x,t_1(x), t_2(x)), \qquad x\in \R^d.
\end{equation}
\begin{remark} \label{remnottrunc}
From the definition \eqref{tis}, it follows that \[t_1,t_2 \in \mathbb Z, \,t_1\geq t_2\implies  T[K]f(x,t_1, t_2)=0 .\] In consequence, for the linearized versions defined in \eqref{tisl} we have \[
\mathrm{supp}\, T[K]_{t_1}^{t_2} f \subset \{x\in \R^d: t_2(x)-t_1(x)>0\}.
\]
A related word on notation: we will be using linearizations of the type $T[K]_{t_1}^{s_Q}$ and similar, where $s_Q$ is the (dyadic) scale of a (dyadic) cube $Q$. With this we mean we are using the constant function equal to $s_Q$ as our upper truncation function. Finally, we will be using the notations
   $t_2\wedge s_Q$ for the linearizing function $x\mapsto \min\{t_2(x),s_Q\}$ and $t_1\vee s_L$ for the linearizing function $x\mapsto \max\{t_1(x),s_L\}$.
\end{remark}
Given two bounded measurable functions $t_1,t_2$ and a stopping collection $\mathcal Q$ with top $Q$, we define the localized truncated bilinear forms
\begin{equation} \label{therealLambdaQ}\begin{split}
\mathcal Q [K]_{ t_1}^{t_2}(f_1,f_2)& := \frac{1}{|Q|} \left[ \left\l
T[K]^{t_2\wedge s_Q}_{t_1}(f_1\cic{1}_{Q}),f_2\right\r- \sum_{\substack {L\in \mathcal Q\\ L\subset Q} }\left\l
T[K]^{t_2\wedge s_L}_{t_1}(f_1\cic{1}_{L}),f_2\right\r\right]  .\end{split}
\end{equation}
\begin{remark} \label{remonlambdaQ}Note that we have normalized by the measure of $Q$, unlike the definitions in \cite[Section 2]{CoCuDPOu}.
Observe that   as a consequence of the support assumptions in \eqref{assker2} and of the largest allowed scale being $s_Q$, we have
\[
\mathcal Q [K]_{ t_1}^{t_2}(f_1,f_2) = \mathcal Q [K]_{ t_1}^{t_2}(f_1\cic{1}_Q,f_2\cic{1}_{3Q}).
\]
Similarly we remark that $T[K]^{t_2\wedge s_L}_{t_1}(f\cic 1_L)$ is supported on the set $3L\cap\{x\in \R^d: s_L-t_1(x)>0\};$ see Remark \ref{remnottrunc}.\end{remark}

Within the above framework, we have the following abstract theorem.
 \begin{theorem}
\label{theoremABS}
Let $[K]=\{K_s:s\in \mathbb Z\}$ be a family of functions satisfying    \eqref{assker2} and \eqref{asstrunc} above.
Assume that   there exist $1\leq p_1,p_2<\infty$ such that \begin{equation}
\label{lest}
\begin{split}
&\sup_{\substack{\|b\|_{\dot{\mathcal X}_{p_1}(\mathcal Q)}=1 \\ \|f\|_{\mathcal Y_{p_2}(\mathcal Q)}=1}}
\left|\mathcal Q [K]_{ t_1}^{t_2}(b,f)\right|  + \sup_{\substack{\|f\|_{{\mathcal Y}_{\infty}(\mathcal Q)}=1 \\ \|b\|_{\dot{\mathcal X}_{p_2}(\mathcal Q)}=1 }}\left|\mathcal Q [K]_{ t_1}^{t_2}(f,b)\right|=: C_{\mathsf{L}}[K](p_1,p_2)<\infty.
\end{split} \end{equation}
hold uniformly over all bounded measurable functions $t_1,t_2,$  and all stopping collections $\mathcal Q$.
 Then
\begin{equation}
\label{thABSest}
\|   T_\star[K]   \|_{\left(p_1,p_2\right),\mathsf{sparse}}
\lesssim
\|[K]\|_{r,\star} +C_{\mathsf{L}}[K](p_1,p_2).
\end{equation}
\end{theorem}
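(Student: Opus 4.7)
\medskip

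\noindent\textbf{Proof plan for Theorem \ref{theoremABS}.}

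My plan is to mimic the recursive stopping-cube scheme of \cite[Theorem C]{CoCuDPOu}, with the additional bookkeeping needed to handle the linearized truncation parameters. By a standard measurable selection, for any $f_1 \in L^\infty_0$ I can produce bounded measurable $t_1,t_2:\R^d\to \Z$ and a sign $\varepsilon$ of modulus one such that $T_\star[K]f_1 \cdot \varepsilon = T[K]_{t_1}^{t_2} f_1$, so it suffices to prove the sparse bound for the bilinear form $\langle T[K]_{t_1}^{t_2} f_1, f_2\rangle$ with the implicit constants uniform in $t_1,t_2$, for arbitrary $f_1,f_2 \in L^\infty_0$. Fix an initial dyadic cube $Q_0$ whose $3$-fold dilate contains $\supp f_1 \cup \supp f_2$ and with scale $s_{Q_0} > \sup_x t_2(x)$, so that $T[K]_{t_1}^{t_2 \wedge s_{Q_0}}(f_1 \mathbf 1_{Q_0}) = T[K]_{t_1}^{t_2} f_1$.

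For any such cube $Q$, I will construct a stopping collection $\mathcal Q$ with top $Q$ by taking the maximal dyadic $L \subset Q$ whose dilate $\widehat L$ meets the exceptional set
\[
E_Q := \{\mathrm{M}_{p_1}(f_1\mathbf 1_{3Q}) > C\langle f_1\rangle_{p_1,3Q}\} \cup \{\mathrm{M}_{p_2}(f_2\mathbf 1_{3Q}) > C\langle f_2\rangle_{p_2,3Q}\} \cup \{T_\star[K](f_1\mathbf 1_{3Q}) > C\|[K]\|_{r,\star}\langle f_1\rangle_{r,3Q}\},
\]
with the constants chosen large enough (using weak-type bounds for $\mathrm M_{p_i}$ and hypothesis \eqref{asstrunc}) that $|E_Q| \le |Q|/4$. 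The Whitney-type choice gives a collection satisfying \eqref{stop1}--\eqref{stop3} and sparsity $\sum_{L\in \mathcal Q}|L| \le |Q|/2$. The recursion uses the identity, immediate from \eqref{therealLambdaQ},
\[
\big\langle T[K]^{t_2\wedge s_Q}_{t_1}(f_1\mathbf 1_Q), f_2\big\rangle = |Q|\,\mathcal Q[K]^{t_2}_{t_1}(f_1,f_2) + \sum_{L\in\mathcal Q} \big\langle T[K]^{t_2\wedge s_L}_{t_1}(f_1\mathbf 1_L), f_2\big\rangle,
\]
so the recursion can be reapplied to each $L$ with the \emph{same} pair $(t_1,t_2)$, since $(t_2\wedge s_L)\wedge s_{L'} = t_2\wedge s_{L'}$ for $L'\subsetneq L$.

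The heart of the argument is the single-scale estimate
\[
|\mathcal Q[K]^{t_2}_{t_1}(f_1,f_2)| \lesssim \big(\|[K]\|_{r,\star}+C_{\mathsf L}[K](p_1,p_2)\big)\langle f_1\rangle_{p_1,3Q}\langle f_2\rangle_{p_2,3Q}.
\]
I obtain it by the Calder\'on--Zygmund-type splitting $f_i = g_i+b_i$, where $b_i = \sum_{L\in\mathcal Q}(f_i-\langle f_i\rangle_L)\mathbf 1_L$ has mean zero on each $L$, and $g_i$ equals $f_i$ off $\mathsf{sh}\,\mathcal Q$ and the local average on each $L$. Because the parent of each $L$ lies outside $E_Q$, the stopping inequalities give $\langle f_i\rangle_{p_i,\widehat L} \lesssim \langle f_i\rangle_{p_i,3Q}$; hence $\|g_1\|_{\mathcal Y_\infty} \lesssim \langle f_1\rangle_{p_1,3Q}$, $\|g_2\|_{\mathcal Y_{p_2}} \lesssim \langle f_2\rangle_{p_2,3Q}$, and $\|b_i\|_{\dot{\mathcal X}_{p_i}} \lesssim \langle f_i\rangle_{p_i,3Q}$. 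Expanding $\mathcal Q(f_1,f_2)$ by bilinearity into four pieces, the three pieces containing some $b_i$ are controlled by $C_{\mathsf L}[K](p_1,p_2)$ via \eqref{lest} (using $\dot{\mathcal X}_{p_2}\hookrightarrow \mathcal Y_{p_2}$ for the $(b_1,b_2)$ term). For the remaining $(g_1,g_2)$ term I estimate the ``outer'' contribution using $|\langle T[K]^{t_2\wedge s_Q}_{t_1}(g_1\mathbf 1_Q),g_2\rangle| \le \|T_\star[K](g_1\mathbf 1_Q)\|_r \|g_2\mathbf 1_{3Q}\|_{r'} \le \|[K]\|_{r,\star}\|g_1\|_\infty \|g_2\|_\infty |3Q|$, and the inner sum analogously term-by-term, using the support of $T[K]^{t_2\wedge s_L}_{t_1}(g_1\mathbf 1_L)$ in $3L$ (cf.\ Remark \ref{remonlambdaQ}) together with sparsity $\sum |L|\le|Q|/2$.

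Iterating this inequality produces a family $\mathcal S$ of cubes, sparse by construction, over which the sparse bound \eqref{thABSest} is summed. The main obstacle is verifying the $(g_1,g_2)$ estimate: the maximal-truncation $L^r$-bound \eqref{asstrunc} applies directly to the outer piece, but the inner pieces require invoking the localized support \emph{and} keeping the truncation parameter $t_2\wedge s_L$ consistent through the recursion, which is why the stopping construction above uses $t_2$ uniformly and defers all cut-offs to the scales $s_L$ that naturally appear in \eqref{therealLambdaQ}.
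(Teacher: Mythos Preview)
Your proposal follows essentially the same recursive scheme as the paper's proof, which itself defers most details to \cite[Theorem C]{CoCuDPOu}: the linearization of $T_\star$, the identity rewriting $\langle T[K]^{t_2\wedge s_Q}_{t_1}(f_1\cic 1_Q),f_2\rangle$ as $|Q|\,\mathcal Q[K]_{t_1}^{t_2}(f_1,f_2)$ plus the children, the Calder\'on--Zygmund splitting $f_i=g_i+b_i$ for the single-scale estimate, and termination once $s_L<\inf t_1$ all match the paper.

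Two points need fixing, however. First, your selection rule ``maximal dyadic $L\subset Q$ whose dilate $\widehat L$ \emph{meets} $E_Q$'' collapses: if $\widehat L$ meets $E_Q$ then so does $\widehat{\pi(L)}$, so maximality forces $L=Q$ and no refinement occurs. The paper instead takes maximal $L$ with $9L\subset\bigcup_{Q'}E_{Q'}$, which gives a genuine Whitney covering of the exceptional set and yields the needed properties \eqref{stop1}--\eqref{stop3}. Relatedly, the stopping collection must consist of cubes in $3Q$, not only in $Q$: otherwise $\mathsf{sh}\,\mathcal Q\subset Q$, and the first term $\|f_2\cic 1_{3Q\setminus\mathsf{sh}\,\mathcal Q}\|_\infty$ in the $\mathcal Y_{p_2}$ norm is uncontrolled on $3Q\setminus Q$, so your claimed bound $\|g_2\|_{\mathcal Y_{p_2}}\lesssim\langle f_2\rangle_{p_2,3Q}$ would fail. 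Second, the third stopping condition on $T_\star[K](f_1\cic 1_{3Q})$ is superfluous for the argument you actually run: your $(g_1,g_2)$ estimate uses only $\|g_i\|_\infty\lesssim\langle f_i\rangle_{p_i,3Q}$ (which comes from the maximal-function stopping alone) together with \eqref{asstrunc} and the support observation from Remark \ref{remonlambdaQ}. With these corrections your outline coincides with the paper's.
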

\begin{proof}The proof   follows     essentially the same   scheme of  \cite[Theorem C]{CoCuDPOu}; for this reason, we limit ourselves to providing an outline of the main steps.
\subsubsection*{Step 1. Auxiliary estimate} First of all, an immediate consequence of the assumptions of the Theorem is that
 the estimate
\begin{equation}
\label{stopiter0}
\left| \mathcal Q [K]_{ t_1}^{t_2}(f_1,f_2)
 \right| \leq C \mathbf{\Theta}_{[K],p_1,p_2} \|f_1\|_{\mathcal Y_
{p_1}(\mathcal Q) } \|f_2\|_{\mathcal Y_{p_2}(\mathcal Q) }  
\end{equation}
where $\mathbf{\Theta}_{[K],p_1,p_2}:= \|[K]\|_{r,\star} +C_{\mathsf{L}}[K](p_1,p_2)  $, holds with
 $C>0$  uniform over bounded measurable functions $t_1,t_2$.   See \cite[Lemma 2.7]{CoCuDPOu}. Therefore,
\begin{equation}
\label{stopiter1}
\left|\left\l
T[K]^{t_2\wedge s_Q}_{t_1}(f_1\cic{1}_{Q}),f_2\right\r\right| \leq C \mathbf{\Theta}_{[K],p_1,p_2} |Q|\|f_1\|_{\mathcal Y_
{p_1}(\mathcal Q) } \|f_2\|_{\mathcal Y_{p_2}(\mathcal Q) } + \sum_{\substack{L \in  \mathcal  Q\\ L \subset Q}}
\left|\left\l
T[K]^{t_2\wedge s_L}_{t_1}(f_1\cic{1}_{L}),f_2\right\r\right|
\end{equation}

\subsubsection*{Step 2. Initialization} The argument begins as follows. Fixing $f_j\in L^{p_j}(\R^d),j=1,2$ with compact support, we may find  measurable functions $t_1,t_2$ which are bounded above and below and a large enough dyadic cube $Q_0$ from one of the canonical $3^d$ dyadic systems such that $\supp f_1\subset Q_0,\supp f_2\subset 3Q_0$ and
\[
\left\l
T_\star[K] f_1 ,|f_2|\right\r \leq 2 \left|\left\l
T[K]^{t_2\wedge s_{Q_0}}_{t_1}(f_1\cic{1}_{Q_0}),|f_2|\right\r\right|
\]
and we clearly can replace $f_2$ by $|f_2|$ in what follows.

\subsubsection*{Step 3. Iterative process} Then, the argument   proceeds via iteration over $k$ of the following construction, which follows from \eqref{stopiter1} and the Calder\'on-Zygmund decomposition and is initialized by taking $\mathcal S_k =\{Q_0\}$ for $k=0$. Given   a disjoint collection of dyadic cubes $Q\in \mathcal S_k$ with the further Whitney property  that \eqref{stop3} holds for $\mathcal S_k$ in place of $\mathcal Q$,
 there exists a further collection of disjoint dyadic cubes $L\in \mathcal S_{k+1}$ such that \begin{itemize} \item  \eqref{stop3} for $\mathcal S_k$ in place of $\mathcal Q$ continues to hold,\item  each subcollection $\mathcal S_{k+1}(Q)=\{L\in \mathcal S_{k+1}: L\subset 3Q\}$ is a stopping collection   with top $Q$, \end{itemize} and for which for all $Q\in \mathcal S_k$ there holds
\begin{equation}
\label{stopiter1tilde}
\begin{split} &\quad 
\left|\left\l
T[K]^{t_2\wedge s_Q}_{t_1}(f_1\cic{1}_{Q}),f_2\right\r\right|\\ & \leq C|Q| \mathbf{\Theta}_{[K],p_1,p_2}\l f_1 \r_{p_1,3Q}  \l f_2 \r_{p_2,3Q} + \sum_{\substack{L \in    \mathcal S_{k+1}(Q)\\ L \subset Q}}
\left|\left\l
T[K]^{t_2\wedge s_L}_{t_1}(f_1\cic{1}_{L}),f_2\right\r\right|.
\end{split}
\end{equation}
 More precisely, $\mathcal S_{k+1}$ is composed by the maximal dyadic cubes $L$ such that
 \begin{equation}
\label{EQ}
 9L\subset
 \bigcup_{Q \in \mathcal S_k} E_Q, \qquad E_Q:=\left\{ x \in 3Q: \max_{j=1,2} \frac{\mathrm{M}_{p_j}(f_j\cic{1}_{3Q}) (x)}{ \langle f_j \r_{p_j,3Q}} > C\right\}
\end{equation}
for a suitably chosen absolute large dimensional constant $C$. This construction,  as well as the Whitney property \eqref{stop3} results into   \begin{equation}
\label{stopiter5}
\left|
Q \cap \bigcup_{L \in \mathcal S_{k+1}} L
\right|=
\left|
Q \cap \bigcup_{Q' \in \mathcal S_{k}: Q' \in N(Q) } E_{Q'}
\right|\leq \frac12 |Q| \qquad \forall Q \in \mathcal S_k, \,k=0,1,\ldots
\end{equation}
guaranteeing that $\mathcal T_k:=\cup_{\kappa=0}^{  k} \mathcal S_\kappa$ is a sparse collection for all $k$.
When $k=\bar k$ is such that $\inf\{ s_{Q}:Q \in \mathcal S_{\bar k}\} < \inf t_1 $, the iteration stops and the estimate
\[
\left\l
T_\star[K] f_1 ,f_2\right\r \lesssim   \mathbf{\Theta}_{[K],p_1,p_2}\sum_{Q \in \mathcal T_{\bar k}} |Q|\l f_1 \r_{p_1,3Q}  \l f_2 \r_{p_2,3Q} \] is reached. This
completes the proof of Theorem \ref{theoremABS}.\end{proof}

\section{Preliminary localized estimates for the truncated forms \eqref{therealLambdaQ}}
\label{S3}
We begin by introducing our notation for the Dini constant of a family of kernels $[K]$ as in \eqref{assker2}. We write
\begin{equation}
\label{NORMS}
\|[K]\|_{\mathsf{Dini}}:=\|[K]\|+  \sum_{j=0}^\infty   \varpi_{j}([K])
\end{equation}
where
\[
\varpi_{j}([K]) := \sup_{s\in \mathbb Z} 2^{ {sd}}\sup_{x\in \R^d  }\sup_{\substack{h\in \R^d\\ |h|<  2^{s-j-3}}}\left(\begin{array}{ll}
  & \|K_s(x,x+\cdot)- K_s(x+h,x +\cdot)\|_{\infty} \\  +&   \|K_s(x+\cdot,x)-K_s(x+\cdot,x+h)\|_{\infty}\end{array} \right)
\]
The estimates contained within the lemmata that follow   are meant to be uniform  over all measurable functions $t_1,t_2$ and all    stopping collections  $\mathcal Q $.
The first one is an immediate consequence of the definitions: for a full proof, see  \cite[Lemma 2.3]{CoCuDPOu}.
\begin{lemma} \label{lemma0} Let $1<r<\infty$. Then
\[
\left|\mathcal Q [K]_{ t_1}^{t_2}(f_1,f_2) \right|\lesssim  \|[K]\|_{r,\star}  \|f_1\|_{\mathcal Y_r(\mathcal Q)} \|f_2\|_{\mathcal Y_{r'}(\mathcal Q)}\]
\end{lemma}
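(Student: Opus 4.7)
The plan is to handle the two terms in definition \eqref{therealLambdaQ} of $\mathcal Q[K]_{t_1}^{t_2}(f_1,f_2)$ separately, reducing each to H\"older's inequality combined with the a priori $L^r$ bound \eqref{asstrunc} on $T_\star[K]$, and then converting the resulting local $L^r$- and $L^{r'}$-norms into the $\mathcal Y_r$- and $\mathcal Y_{r'}$-norms by a standard stopping-time decomposition.

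First, the support constraint in \eqref{assker2} combined with the upper truncation at scale $s_Q$ forces $T[K]^{t_2\wedge s_Q}_{t_1}(f_1\mathbf 1_Q)$ to be supported in $3Q$, and similarly $T[K]^{t_2\wedge s_L}_{t_1}(f_1\mathbf 1_L)$ in $3L$ for each $L\in\mathcal Q$ with $L\subset Q$; cf.\ Remark \ref{remonlambdaQ}. Dominating pointwise by $T_\star[K]$, applying H\"older, and invoking \eqref{asstrunc}, one obtains
\[
\frac{1}{|Q|}\left|\l T[K]^{t_2\wedge s_Q}_{t_1}(f_1\mathbf 1_Q),f_2\r\right|\lesssim \|[K]\|_{r,\star}\l f_1\r_{r,Q}\l f_2\r_{r',3Q},
\]
and each term of the subtracted sum is analogously controlled by $\|[K]\|_{r,\star}\tfrac{|L|}{|Q|}\l f_1\r_{r,L}\l f_2\r_{r',3L}$.

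It remains to bound the $r$- and $r'$-averages appearing above by the corresponding $\mathcal Y$-norms. For $f\in\mathcal Y_p(\mathcal Q)$ and a cube $R\subset 3Q$, I would split $\int_R|f|^p$ as the integral over $R\setminus\mathsf{sh}\mathcal Q$, handled by the $L^\infty$ piece of the norm, plus $\sum_{L\in\mathcal Q,\,L\cap R\neq\varnothing}\int_{L\cap R}|f|^p$; for the latter, pick $y\in\widehat L$ with $\mathrm M_p f(y)\leq\|f\|_{\mathcal Y_p}$ and use that an appropriate dilate of $\widehat L$ contains both $y$ and $L$ while having measure comparable to $|L|$, which yields $\l f\r_{p,L}\lesssim\|f\|_{\mathcal Y_p}$. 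Summing over $L\in\mathcal Q$ via the disjointness \eqref{stop2} and the fact that $\sum_L|L|=|\mathsf{sh}\mathcal Q|\leq|3Q|$ from \eqref{stop1} then gives the claimed bound for both contributions. The only mildly subtle point is the cube-comparison estimate $\l f\r_{p,L}\lesssim\|f\|_{\mathcal Y_p}$, which is where the $2^5$-fold dilate in the definition of $\widehat L$ plays its role; everything else is bookkeeping.
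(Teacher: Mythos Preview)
Your approach is correct and is exactly the standard argument the paper has in mind; the paper itself omits the proof and refers to \cite[Lemma 2.3]{CoCuDPOu}, where the same H\"older-plus-maximal-truncation reduction is carried out. One small clarification on your final paragraph: the splitting argument with the packing $\sum_L |L|\leq |3Q|$ is what you need for $R=Q$ or $R=3Q$ (the top term), whereas for the subtracted sum you use the individual estimates $\langle f_1\rangle_{r,L},\,\langle f_2\rangle_{r',3L}\lesssim\|f_j\|_{\mathcal Y}$ (directly from the $\widehat L$ trick, since $3L\subset\widehat L$) together with the separate packing $\sum_{L\subset Q}|L|\leq |Q|$; your paragraph contains both ingredients but runs them together slightly.
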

The second one is a variant of \cite[Lemma 3.2]{CoCuDPOu}; we provide a full proof.
\begin{lemma}  \label{bbestimate1}  There holds
\[
\left|\mathcal Q [K]_{ t_1}^{t_2}(b,f)\right|  \lesssim  \|[K]\|_{\mathsf{Dini}}  \|b\|_{\dot{\mathcal X}_1(\mathcal Q)}  \|f\|_{\mathcal Y_1(\mathcal Q) }.
\]
\end{lemma}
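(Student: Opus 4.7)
The plan is to reduce $|\mathcal Q[K]_{t_1}^{t_2}(b,f)|$ to single-scale kernel-difference estimates indexed by $L \in \mathcal Q$ with $L \subset Q$, exploiting the mean-zero of each $b_L$, the Dini smoothness \eqref{NORMS}, and the averaging inequality built into the $\mathcal Y_1$ norm.

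First I would use the structure of $b$ to telescope. Writing $b=\sum_{L\in\mathcal Q}b_L$ with $\supp b_L\subset L$, and using dyadic nesting of the stopping collection (so stopping cubes $L\in\mathcal Q$ meeting $Q$ are contained in $Q$), we have $b\cic{1}_Q=\sum_{L\subset Q}b_L$ and $b_L\cic{1}_L=b_L$. Substituting into \eqref{therealLambdaQ} gives
\[
\mathcal Q[K]_{t_1}^{t_2}(b,f)=\frac{1}{|Q|}\sum_{\substack{L\in\mathcal Q\\ L\subset Q}}\left\langle\bigl(T[K]_{t_1}^{t_2\wedge s_Q}-T[K]_{t_1}^{t_2\wedge s_L}\bigr)b_L,\,f\right\rangle.
\]
Via \eqref{tisl}, the operator in brackets linearizes at each $x$ to a sum of $K_s\ast$ over scales $s$ with $s>s_L$ (and $t_1(x)<s\le t_2(x)$).

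Next, at each fixed scale $s>s_L$ I would exploit the mean-zero of $b_L$ on $L$ to write the integrand as $K_s(x,y)-K_s(x,c_L)$. Since $|y-c_L|\lesssim 2^{s_L}$ for $y\in L$, the definition \eqref{NORMS} yields
\[
|K_s(x,y)-K_s(x,c_L)|\lesssim\varpi_{s-s_L-C_d}([K])\cdot 2^{-sd}
\]
for a dimensional constant $C_d$, with the finitely many short scales $s\le s_L+C_d$ absorbed by the trivial bound $|K_s|\lesssim\|[K]\|\cdot 2^{-sd}$. For $y\in L$ the $x$-support of $K_s(\cdot,y)$ is contained in a ball $B:=B(c_L,C_d 2^s)$, and $\widehat L=2^5 L\subset B$ since $s\ge s_L$. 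For every $x\in\widehat L\subset B$ the Hardy-Littlewood bound gives $|B|^{-1}\int_B|f|\le\mathrm{M}f(x)$, so taking the infimum over $x\in\widehat L$ and invoking \eqref{Yq} yields $\int_B|f|\lesssim 2^{sd}\|f\|_{\mathcal Y_1(\mathcal Q)}$. Combining, for each admissible scale,
\[
|\langle K_s\ast b_L,\,f\rangle|\lesssim\varpi_{s-s_L-C_d}([K])\,\|b_L\|_1\,\|f\|_{\mathcal Y_1(\mathcal Q)}.
\]

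Finally, summing on $s>s_L$ telescopes the Dini moduli into $\|[K]\|_{\mathsf{Dini}}$, and summing on $L\subset Q$ uses disjointness of the $L$'s in $3Q$ together with the averaging $|L|^{-1}\int_L|b|\le\inf_{x\in\widehat L}\mathrm{M}b(x)$:
\[
\sum_{L\subset Q}\|b_L\|_1=\sum_{L\subset Q}\int_L|b|\le\sum_{L\subset Q}|L|\inf_{x\in\widehat L}\mathrm{M}b(x)\le|3Q|\,\|b\|_{\mathcal Y_1(\mathcal Q)}=3^d|Q|\,\|b\|_{\dot{\mathcal X}_1(\mathcal Q)},
\]
the last equality because $b\in\dot{\mathcal X}_1\subset\mathcal Y_1$ is supported in $\mathsf{sh}\mathcal Q$. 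The $|Q|^{-1}$ prefactor cancels and the lemma follows. The only subtlety, more bookkeeping than obstacle, is calibrating the index shift $j=s-s_L-C_d$ to the threshold $|h|<2^{s-j-3}$ appearing in the definition of $\varpi_j$, and isolating the $O(1)$ short scales where one must fall back on $|K_s|\lesssim\|[K]\|\cdot 2^{-sd}$.
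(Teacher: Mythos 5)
Your argument is correct and essentially coincides with the paper's: both decompose $b=\sum_L b_L$, isolate the scales $s>s_L$ coming from the telescoping $T^{t_2\wedge s_Q}_{t_1}-T^{t_2\wedge s_L}_{t_1}$, exploit the mean-zero of $b_L$ together with the Dini modulus $\varpi_j$ at each single scale, control the $f$-averages over the $O(2^s)$-sized support via the $\mathcal Y_1$ norm, and finally sum over disjoint $L$ using $\sum_L\|b_L\|_1\lesssim|Q|\,\|b\|_{\dot{\mathcal X}_1}$. The only cosmetic differences are that the paper first rescales to $|Q|=1$ and passes through the intermediate bound $|\mathcal Q_{t_1}^{t_2}(b_L,f)|\leq\langle {T_\star}_{s_L}^{s_Q}b_L,f\rangle$ before expanding into scales, whereas you expand directly.
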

\begin{proof} We consider the family $K$ fixed and use the simplified notation $\mathcal Q  _{ t_1}^{t_2}$ in place of $\mathcal Q [K]_{ t_1}^{t_2}$, and similarly for the truncated operators $T[K]$. By horizontal rescaling we can assume $|Q|=1$.
Let $b \in \dot {\mathcal X}_1  $. Recalling the definition \eqref{bell} and using bilinearity of $\mathcal Q_{ t_1}^{t_2}$ it suffices for each stopping cube $R\in \mathcal Q$ to prove that
\begin{equation} \label{twoest}
\begin{split}
\left|\mathcal Q _{ t_1}^{t_2}(b_R,f)\right|  &  \lesssim \|[K]\|_{\mathsf{Dini}} \|b_R\|_1  \|f\|_{\mathcal Y_1 }
\end{split}
\end{equation}
as $ \|b_R\|_1 \lesssim |R| \|b\|_{\dot{\mathcal X}_1 } $,
and conclude by  summing up over the disjoint $R\in \mathcal Q$, whose union is contained in $3Q$.
We may further assume $R\subset Q$; otherwise $\mathcal Q _{ t_1}^{t_2}(b_R,f)=0$. In addition we can assume $f$ is positive, by repeating the same argument below with the real and imaginary, and positive and negative parts of $f$.  Using the definition of the truncated forms \eqref{therealLambdaQ} and the disjointness of $L \in \mathcal Q$,\[
\big|\mathcal Q _{ t_1}^{t_2}(b_R,f) \big|= \big|\left\l
T^{t_2\wedge s_Q}_{t_1}(b_R) - T^{ t_2\wedge s_R}_{t_1}(b_R),f\right\r\big|
=\big| \left\l
T^{t_2\wedge s_Q}_{t_1\vee s_R}(b_R),f\right\r \big|\leq \left\l
{T_\star}_{s_R}^{s_Q}b_R,f\right\r.    \]
Thus, if $R_s$ denotes the cube concentric to $R$ and whose sidelength is $2^{10+s}$, using the support conditions and abbreviating a standard calculation
\[
\begin{split}
\big|\mathcal Q _{ t_1}^{t_2}(b_R,f)\big|\leq  \left\l
{T_\star}_{s_R}^{s_Q}b_R,f\right\r   & \leq \sum_{s=s_R+1}^{s_Q} \int_{R_s} \left| \int_{R} K_{s}(x,y) b_R(y)\, \d y\right|  f(x) \,\d x\\ & \lesssim \|[K]\|_{\mathsf{Dini}}  \|b_R\|_{\dot{\mathcal X}_1}  \sup_{j} \l f \r_{1,R_j}
\end{split}
\]
which is bounded by the right hand side of \eqref{twoest}.
\end{proof}
The third localized estimate is  new. However, its roots lie in the well-known principle that the maximal truncations of a Dini-continuous kernel to scales larger than $s$ do not oscillate too much on a ball of radius $2^s$, see \eqref{osclemma}. This was recently employed, for instance, in \cite{FSZK,HRT,Lac2015}.
\begin{lemma}  \label{bbestimate2}  There holds
\[
\left|\mathcal Q [K]_{ t_1}^{t_2}(f,b)\right|  \lesssim \left( \|[K]\|_{\mathsf{Dini}} \vee \|K\|_{r,\star}\right) \|f\|_{\mathcal Y_\infty(\mathcal Q)} \|b\|_{ \mathcal X_1(\mathcal Q)}.
\]
\end{lemma}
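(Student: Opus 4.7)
The plan is to combine a structural decomposition of $\mathcal Q[K]_{t_1}^{t_2}(f,b)$ with a Dini oscillation principle for the maximal truncations and the $L^r$-boundedness hypothesis $\|[K]\|_{r,\star}<\infty$. Compared to Lemma~\ref{bbestimate1}, the new difficulty is that $b\in\mathcal X_1$ is \emph{not} required to have vanishing averages on the stopping cubes, so there is no cancellation of $b$ to exploit; all cancellation must instead be produced on the kernel side via Dini smoothness in the first kernel variable.

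I would first split $1_Q=1_{Q\setminus\sh\mathcal Q}+\sum_{L'\subset Q}1_{L'}$ and apply the telescoping identity $T[K]_{t_1}^{t_2\wedge s_Q}-T[K]_{t_1}^{t_2\wedge s_{L'}}=T[K]_{s_{L'}}^{t_2\wedge s_Q}$ to rewrite
\begin{equation*}
|Q|\cdot\mathcal Q[K]_{t_1}^{t_2}(f,b)
= \bigl\langle T[K]_{t_1}^{t_2\wedge s_Q}(f\cic{1}_{Q\setminus\sh\mathcal Q}),b\bigr\rangle
+ \sum_{L'\subset Q}\bigl\langle T[K]_{s_{L'}}^{t_2\wedge s_Q}(f\cic{1}_{L'}),b\bigr\rangle
=: \mathrm I+\mathrm{II}.
\end{equation*}
The core technical ingredient is the Dini oscillation principle: for any bounded $g$ and any $|x-x'|\lesssim 2^\sigma$,
\begin{equation*}
\sup_{\sigma\le\tau_1\le\tau_2}\bigl|T[K]_{\tau_1}^{\tau_2}(g)(x)-T[K]_{\tau_1}^{\tau_2}(g)(x')\bigr|\lesssim\|[K]\|_{\mathsf{Dini}}\|g\|_\infty,
\end{equation*}
obtained by telescoping $K_s(x,\cdot)-K_s(x',\cdot)$ over scales $s\geq\sigma$ and summing the Dini moduli $\varpi_j([K])$.

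For $\mathrm I$, the Whitney property $9L\subset\sh\mathcal Q$ forces $|x-y|\gtrsim 2^{s_L}$ for $x\in L\in\mathcal Q$ and $y\in Q\setminus\sh\mathcal Q$, so only scales $s\geq s_L+2$ contribute to the integrand, and the oscillation principle at $\sigma=s_L$ applies. Combined with $\|b_L\|_1\leq|L|\|b\|_{\mathcal X_1(\mathcal Q)}$, the bounded overlap of the dilates $\widehat L$, and the elementary inequality $\sum_L\|b_L\|_1\inf_{\widehat L}F\lesssim \|b\|_{\mathcal X_1}\int_{CQ} F$ applied with $F=T_\star[K](f\cic{1}_{Q\setminus\sh\mathcal Q})$, the $L^r$-bound $\|[K]\|_{r,\star}$ and H\"older close the estimate. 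For $\mathrm{II}$, each $G_{L'}(x):=T[K]_{s_{L'}}^{t_2\wedge s_Q}(f\cic{1}_{L'})(x)$ is supported within distance $2^{s_Q}$ of $L'$ and, by the oscillation principle at $\sigma=s_{L'}$, is essentially constant on balls of radius $2^{s_{L'}}$ modulo an error $\|[K]\|_{\mathsf{Dini}}\|f\|_\infty$. Writing $\langle G_{L'},b\rangle=\sum_L\int_L G_{L'}b_L$, I would split according to whether $L'\in N(L)$: near-neighbor contributions admit a direct $L^r$-Kolmogorov argument (no scale loss thanks to $|L|\sim|L'|$ from \eqref{stop3}), while off-diagonal contributions enjoy the geometric decay $|G_{L'}(x)|\lesssim\|[K]\|\|f\|_\infty|L'|\,\dist(L,L')^{-d}$ forced by the kernel support condition $|x-y|<2^s$.

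The hardest part will be closing the double summation over $(L,L')$ in $\mathrm{II}$: one must pair the off-diagonal decay of $G_{L'}$ with the Whitney-type distribution of stopping-cube scales inside $Q$ and the $L^r$-bound $\|[K]\|_{r,\star}$ to prevent a logarithmic loss from the dyadic scale range, ultimately producing a bound of order $|Q|\|f\|_{\mathcal Y_\infty}\|b\|_{\mathcal X_1}(\|[K]\|_{\mathsf{Dini}}\vee\|[K]\|_{r,\star})$ as required.
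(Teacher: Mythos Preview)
Your decomposition of the first argument $f$ and the oscillation principle are sound ingredients, and your treatment of $\mathrm I$ is essentially correct. The gap lies in the off-diagonal part of $\mathrm{II}$. The pointwise decay $|G_{L'}(x)|\lesssim\|[K]\|\,\|f\|_\infty\,|L'|\,\dist(L,L')^{-d}$ that you invoke, summed over $L'\not\in N(L)$, yields only $\sum_{L'}|L'|\,\dist(L,L')^{-d}\sim s_Q-s_L$, and then $\sum_L\|b_L\|_1(s_Q-s_L)$ can be of order $N|Q|\,\|b\|_{\mathcal X_1}$ when $\mathcal Q$ consists of cubes of scale $s_Q-N$ tiling $3Q$. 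Nothing in the definition of a stopping collection prevents this configuration, and once you have taken absolute values of each $G_{L'}$ separately, the $L^r$-bound $\|[K]\|_{r,\star}$ has no entry point.

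The missing observation is that for $x\in L$ and $L'\not\in N(L)$ one has $|x-y|>2^{\max(s_L,s_{L'})}$ for every $y\in L'$, so the lower truncation $s_{L'}$ in $G_{L'}(x)$ is inactive and may be replaced by $s_L$. This allows you to \emph{re-sum} $\sum_{L'}G_{L'}(x)$, up to a bounded near-neighbor correction, into the single object $T[K]_{t_1\vee s_L}^{t_2\wedge s_Q}(f\cic{1}_Q)(x)$. Combining with your term $\mathrm I$ (which admits the same lower truncation for the same reason) recovers exactly the quantity $F(x)=\sup_{s_R\le\tau_1\le\tau_2\le s_Q}|Tf(x,\tau_1,\tau_2)|$ that the paper pairs with $b$. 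The paper then splits $b=g+z$ with $g=\sum_R\langle b\rangle_{1,R}\cic{1}_R$ bounded and $z$ mean-zero on each $R$: the $L^r$-bound controls $\langle T_\star f,g\rangle$ directly, while the mean-zero $z$ sees only the Dini oscillation of $F$ on $R$. In short, the paper never decomposes $f$; the decisive step you are missing is to manufacture cancellation on the $b$ side by extracting its constant part, rather than to chase decay of the individual pieces $G_{L'}$.
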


\begin{proof}
We use similar notation as in the previous proof and again we rescale to  $|Q|=1$,
and  work with positive $b \in \mathcal X_1$. We can of course assume that $\mathrm{supp}\, f\subset Q $. We begin by removing an error term; namely, referring to notation \eqref{stop1},
if $$
b_{\mathsf{o}} =\sum_{R\not\in \mathsf{c}\mathcal Q} b_R
$$
then
\begin{equation}
\label{bo}
\left|\mathcal Q [K]_{ t_1}^{t_2}(f,b_{\mathsf{o}})\right| \leq   \left \l
|Tf(\cdot,s_Q-1,s_Q)| ,b_{\mathsf{o}}\right\r    \lesssim \|[K]\|_{}  \|b\|_{\mathcal X_1} \|f\|_{\mathcal Y_\infty}
\end{equation}
The first inequality holds because $\mathrm{dist}(\mathrm{supp} f,\mathrm{supp}\,b_{\mathsf{o}}) > 2^{s_Q-1}  $, so at most the $s_Q$ scale may contribute, and in particular no contribution comes from cubes $L\subsetneq Q$. The second inequality is a trivial estimate, see \cite[Appendix A]{CoCuDPOu} for more details.
Thus we may assume $b_R=0$ whenever $R\not\in \mathsf{c}\mathcal Q$.
We begin the main argument by fixing $R \in \mathsf{c}\mathcal Q$. Then by support considerations
\[\left \l
T^{  t_2\wedge s_L}_{t_1}(f\cic{1}_{L}),b_R\right\r \neq 0 \implies L\in N(R). \]
Similarly,    \[
 \left \l
T^{  t_2\wedge s_R}_{t_1}f ,b_R \right\r=\left \l T^{  t_2\wedge s_R}_{t_1}(f\cic{1}_{\mathsf{sh}\mathcal Q}) ,b_R \right\r=\sum_{\substack{L\in N(R)\\ L\subset Q}}
 \left \l T^{ t_2\wedge  s_R}_{t_1}(f\cic{1}_{L}),b_R\right\r.   \]
In fact,  using \eqref{stop1} we learn that $\mathrm{dist}(\mathsf{sh} \mathcal Q,R) > 2^{s_R}$, whence the first equality.
Therefore,   subtracting and adding the last display to obtain the second equality,
 \[\begin{split}
 &\quad
\mathcal Q _{ t_1}^{t_2}(f,b_R)=
\left \l  T^{t_2\wedge s_Q}_{t_1}f,b_R\right\r - \sum_{\substack{L\in N(R)\\ L\subset Q}}\left \l  T^{ t_2\wedge  s_L}_{t_1}(f\cic{1}_{L}),b_R\right\r \\ & =   \left\l  T^{t_2\wedge s_Q}_{t_1\vee s_R}f,b_R\right\r  -  \sum_{\substack{L\in N(R)\\ L\subset Q}} \mathrm{sign}(s_L-s_R) \left \l T^{ t_2\wedge (  s_L \vee s_R)}_{t_1\vee (s_L \wedge s_R)}(f\cic{1}_{L}),b_R\right\r. \\
\end{split}
 \] Now, the summation in the above display is then bounded in absolute value by
\[
\sum_{L\in N(R)} \left \l {T_\star}_{s_L \vee s_R}^{s_L \wedge s_R} (f\cic 1_L),b_R \right\r \lesssim \|[K]\| \sum_{L\in N(R)} \|b_{R}\|_1 \l f \r_{1,L} \lesssim  |[K]\|_{\mathsf{Dini}}  \|b_R\|_1 \|f\|_{\mathcal Y_\infty},
\]
using that $|s_L-s_R|\leq 8$ whenever $L\in N(R)$. Therefore when $R\in \mathsf{c}\mathcal Q$
\begin{equation}
\label{interm1}
\begin{split}  &\quad
\left|\mathcal Q _{ t_1}^{t_2}(f,b_R)\right|\leq
\left| \left\l  T^{t_2\wedge s_Q}_{t_1\vee s_R}f,b_R\right\r\right|  + C \|[K]\|_{\mathsf{Dini}}  \|b_R\|_1 \|f\|_{\mathcal Y_\infty}
\end{split}
\end{equation}
with absolute constant $C$. Now, define the function
\[
F(x) = \begin{cases}\displaystyle \sup_{s_R\leq \tau_1\leq \tau_2\leq s_Q} |Tf(x,\tau_1,\tau_2)| & x\in R \in \mathsf{c}\mathcal Q \\ 0 & x \not \in \bigcup_{R\in \mathsf{c} \mathcal Q} R
\end{cases}
\]
and notice that $|T^{t_2\wedge s_Q}_{t_1\vee s_R}f|\leq F$ on $R\in \mathsf{c}\mathcal Q$. Since $b$ is positive, using \eqref{bo},
summing \eqref{interm1} over $R\in \mathsf{c}\mathcal Q$ and using that this is a pairwise disjoint collection, we obtain that
\begin{equation}
\label{interm2}
\begin{split} &\quad
|\mathcal Q _{ t_1}^{t_2}(f,b)|\leq |\mathcal Q _{ t_1}^{t_2}(f,b_{\mathsf{o}})| + \sum_{R\in \mathsf{c}\mathcal Q} |\mathcal Q _{ t_1}^{t_2}(f,b_R)|\\ &\leq  C \|[K]\|_{\mathsf{Dini}}  \|b\|_{\mathcal Y_1} \|f\|_{\mathcal Y_\infty}
+\sum_{R\in \mathsf{c} \mathcal Q}  \left|\left\l  T^{t_2\wedge s_Q}_{t_1\vee s_R}f,b_R\right\r\right|  \\ & \leq C  \|[K]\|_{\mathsf{Dini}}  \|b\|_{\mathcal X_1} \|f\|_{\mathcal Y_\infty}
+\sum_{R\in  \mathsf{c} \mathcal Q} \left\l F,b_R\right\r= C \|[K]\|_{\mathsf{Dini}}  \|b\|_{\mathcal X_1} \|f\|_{\mathcal Y_\infty} + \left\l  F,b\right\r.
\end{split}
\end{equation}
Therefore, we are left with bounding $\l F,b \r $. This is actually done using both the $L^r$ estimate and the Dini cancellation condition. In fact, decompose
\[
b= g + z, \qquad g=\sum_{R\in  \mathsf{c}\mathcal Q} g_R: = \sum_{R\in  \mathsf{c}\mathcal Q} \l b \r_{1,R}\cic{1}_R, \qquad z=\sum_{R\in \mathsf{c} \mathcal Q} z_R: = \sum_{R\in  \mathsf{c}\mathcal Q} \left(b- \l b\r_{1,R}\right)\cic{1}_R
\]
so that
\[
\|g\|_{{{\mathcal Y}}_\infty}\leq  \|b\|_{{{{\mathcal X}}_1}}, \qquad
\|z\|_{{\dot{\mathcal X}}_1}\leq 2 \|b\|_{{{{\mathcal X}}_1}}\]
Then
\begin{equation}
\label{interm3}
\l F,g\r \leq \l T_\star f, g\r \leq \|[K]\|_{r,\star}  \|f\|_{r} \|g\|_{r'} \leq \|[K]\|_{r,\star}  \|g\|_{{{\mathcal Y}}_\infty} \|f\|_{{{\mathcal Y}}_\infty} \leq \|[K]\|_{r,\star}  \|f\|_{{{\mathcal Y}}_\infty} \|b\|_{{{\mathcal X}}_1}
\end{equation}
and we are left to control $|\l F, z\r |$.
We recall from \cite[Lemma 2.1]{HRT} the inequality
\begin{equation}
\label{osclemma}
|Tf(x,\tau_1,\tau_2)-Tf(\xi,\tau_1,\tau_2)| \lesssim \|[K]\|_{\mathsf{Dini}} \sup_{s\geq s_R} \l f \r_{1,R_s}, \qquad x,\xi \in R, \tau_2
\geq \tau_1\geq s_R
\end{equation}
where   $R_s$ is the cube concentric  with $R$ and sidelength $2^s$, whence for suitable absolute constant $C$
\[
F  (x)\leq F(\xi)  +  C \|[K]\|_{\mathsf{Dini}} \|f\|_{\mathcal Y_1}, \qquad x,\xi \in R
\]
and taking  averages there holds
\[
\sup_{x\in R}|F(x) - \l F \r_{1,R}|\lesssim   \|[K]\|_{\mathsf{Dini}} \|f\|_{\mathcal Y_1}.
\]
Finally, using the above display and the fact that each $z_R$ has zero average and is supported on $R$,
\begin{equation}
\label{interm4}
\begin{split}
|\l F, z\r| &\leq\sum_{R\in \mathcal Q } |\l  F,z_R\r| =
\sum_{R\in\mathsf{c} \mathcal Q } |\l  F-\l F \r_{1,R}\cic{1}_R,z_R\r|  \lesssim \|[K]\|_{\mathsf{Dini}}  \|f\|_{\mathcal Y_1}\sum_{R\in\mathsf{c} \mathcal Q } \|z_R\|_1 \\ &\leq   \|[K]\|_{\mathsf{Dini}}  \|f\|_{\mathcal Y_\infty}  \|b\|_{\mathcal X_1}
\end{split}
\end{equation}
and collecting \eqref{interm2}, \eqref{interm3} and \eqref{interm4} completes the proof of the Lemma.
\end{proof}
By Proposition \ref{interprop} applied to the forms $\mathcal Q _{ t_1}^{t_2}[K]$, we may interpolate  the bound of Lemma \ref{bbestimate1} with the one of   Lemma \ref{lemma0} with $r=2$. A similar but easier procedure allows to interpolate Lemma \ref{bbestimate2} with   Lemma \ref{lemma0} with $r=2$. We summarize the result of such interpolations in the following lemma.
\begin{lemma}  \label{ppestimate}   For $0\leq \eps\leq 1$ and $p=\frac{2}{2-\eps}$ there holds
\[
C_{\mathsf{L}}[K](p,p)\lesssim \left( \\|[K]\|_{\mathsf{Dini}} \vee \|[K]\|_{2,\star} \right)^{1-\eps} \left(\|[K]\|_{2,\star}\right)^{\eps}\]
where $C_{\mathsf{L}}[K](p_1,p_2)$ is defined in \eqref{lest}.
\end{lemma}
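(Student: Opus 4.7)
The plan is to obtain each of the two summands of $C_{\mathsf L}[K](p,p)$ as an interpolation between the $L^{2}$ bound of Lemma \ref{lemma0} (taken at $r=2$) and the Dini-cancellation bound of Lemma \ref{bbestimate1} or Lemma \ref{bbestimate2}, via close variants of the scheme used to prove Proposition \ref{interprop}.

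For the $(\dot{\mathcal X}_{p},\mathcal Y_{p})$ summand I would apply Proposition \ref{interprop} verbatim to $B = \mathcal Q[K]_{t_{1}}^{t_{2}}$: Lemma \ref{bbestimate1} supplies the first hypothesis with $A_{1} = C\|[K]\|_{\mathsf{Dini}}$, and Lemma \ref{lemma0} at $r=2$ supplies the second with $A_{2} = C\|[K]\|_{2,\star}$. Since the proof of Proposition \ref{interprop} presupposes $A_{2}\leq A_{1}$, one painlessly replaces $A_{1}$ by $\|[K]\|_{\mathsf{Dini}}\vee\|[K]\|_{2,\star}$ (at the cost of an absolute constant), and the Proposition then delivers precisely the claimed bound at $p = \tfrac{2}{2-\eps}$.

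For the $(\mathcal Y_{\infty},\dot{\mathcal X}_{p})$ summand I would run a one-slot variant of the same scheme. Because $\|h\|_{\mathcal Y_{q}}\leq\|h\|_{\mathcal Y_{\infty}}$ for every $q\in[1,\infty]$, the $f$-slot does not need splitting: after pulling $\|f\|_{\mathcal Y_{\infty}}$ out of both endpoint bounds and normalizing $\|f\|_{\mathcal Y_{\infty}} = \|b\|_{\dot{\mathcal X}_{p}} = 1$, it suffices to decompose only $b$ at level $\lambda$ by
\[
z := b_{>\lambda},\qquad g := b - z = b_{\leq\lambda},
\]
both still supported in the disjoint union of the atoms of $\mathcal Q$. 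Following the template of the proof of Proposition \ref{interprop} and exploiting the stopping control $\inf_{\widehat L}\mathrm M_{p}b\lesssim 1$, one verifies $\|g\|_{\mathcal Y_{2}}\lesssim\lambda^{1-p/2}$ and $\|z\|_{\mathcal X_{1}}\lesssim\lambda^{1-p}$. Applying Lemma \ref{lemma0} to $\mathcal Q[K]_{t_{1}}^{t_{2}}(f,g)$ and Lemma \ref{bbestimate2} to $\mathcal Q[K]_{t_{1}}^{t_{2}}(f,z)$, then optimizing at $\lambda = (A_{1}/A_{2})^{2/p}$ as in Proposition \ref{interprop}, produces the required product bound. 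The endpoint cases $\eps\in\{0,1\}$ fall back directly on the endpoint lemmas, and the only real bookkeeping lies in verifying the two norm estimates on $g$ and $z$, which is entirely routine.
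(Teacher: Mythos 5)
Your proposal is correct and takes the same route as the paper: the paper's own proof of this lemma is the one-paragraph remark that Proposition \ref{interprop} interpolates Lemma \ref{bbestimate1} with Lemma \ref{lemma0} (at $r=2$) for the $(\dot{\mathcal X}_p,\mathcal Y_p)$ summand, and that "a similar but easier procedure" interpolates Lemma \ref{bbestimate2} with Lemma \ref{lemma0} for the $(\mathcal Y_\infty,\dot{\mathcal X}_p)$ summand. You have simply spelled out that easier one-slot variant — decomposing only $b$ at level $\lambda$, feeding $z=b_{>\lambda}$ into Lemma \ref{bbestimate2} and $g=b_{\leq\lambda}$ into Lemma \ref{lemma0}, and optimizing $\lambda$ — which is exactly what the paper has in mind.
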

\begin{remark}[Calder\'on-Zygmund theory]
Let $T$ be an $L^2(\R^d)$-bounded singular integral operator with  Dini-continuous kernel $K$. Then its maximal truncations obey the estimate
\begin{equation}
\label{pointCZ}
T_{\star} f(x):=
\sup_{\delta>0}\left| \int_{\delta<|h|<\frac1\delta} K(x,x+h) f(x+h) \, \d h\right|\lesssim \mathrm{M} f(x) + T_{\star}[K] f(x),
\end{equation} 
with  the family $[K]:=\{K_s:s\in \mathbb Z\}$  defined by
\[
K_s(x,x+h) := K(x,x+h) \psi (2^{-s}h ), \qquad x,h \in \R^d
\]
where the smooth  radial function $\psi$ satisfies
\[
\supp \psi\subset\{h\in \R^d:2^{-2}<|h|<1\}, \qquad \sum_{s\in \mathbb Z} \psi (2^{-s}h )=1, \, h\neq 0.
\]
We know from classical theory  \cite[Ch.\ I.7]{Stein} that $\|[K]\|_{2,\star}\lesssim \|T\|_{L^2(\R^d)}+ \|K\|_{\mathsf{Dini}} $. Therefore, in consequence of \eqref{pointCZ} and  of the bound $\|\mathrm M\|_{(1,1),\mathsf{sparse}}\lesssim 1$, an application of Theorem \ref{theoremABS} in conjunction with Lemmata \ref{bbestimate1} and \ref{bbestimate2} yields that
\[
\|T_\star\|_{(1,1),\mathsf{sparse}} \lesssim \|T\|_{L^2(\R^d)}+ \|K\|_{\mathsf{Dini}}.
\]
This  is a well-known result. The dual pointwise version was  first obtained  in this form in \cite{HRT} quantifying the initial result of Lacey \cite{Lac2015}; see also \cite{Ler2015}. An extension to multilinear operators   with less regular kernels was recently obtained in \cite{Li1}.
\end{remark}
\section{Proof of Theorem \ref{theoremRK}} \label{S4}
In this section,
we will prove Theorem \ref{theoremRK} by appealing to Theorem \ref{theoremABS} for the family $[K]=\{(x,y)\mapsto K_s(x-y):s\in \mathbb Z\}$ of \eqref{assker}. First of all, we notice that the assumption \eqref{assker2} is a direct consequence of \eqref{assker}. It is known from e.g.\ \cite{DR} (and our work below actually reproves this) that, with reference to \eqref{deftstar}
\[
\|T_\star[K]\|_{L^2(\R^d)} \lesssim 1,
\]
which is assumption \eqref{asstrunc} with $r=2$. Therefore, for an application of Theorem \ref{theoremABS} with
\[p_1=p_2=p=\frac{2}{2-\eps}, \qquad 0<\eps<1 \]
 we are left with verifying the corresponding stopping estimates \eqref{lest} hold with $C_\mathsf{L}[K](p,p) \lesssim \eps^{-1}$.
We do so by means of     a     Littlewood-Paley decomposition, as follows. Let $\varphi$ be a smooth radial function on $\R^d$ with support in a sufficiently small ball containing the origin, having mean zero and such that
\[
  \sum_{k=-\infty}^{\infty} \widehat{ \varphi_k}( \xi) =1, \quad \forall \xi \neq 0, \qquad \varphi_{k}(\cdot):= 2^{-kd} \varphi({ 2^{-k} }\cdot).    
\]
Also define
\begin{equation} \label{LP}
\phi_{k}(\cdot) :=  \sum_{\ell \geq k}  {\varphi_\ell(\cdot)}, 
\qquad K_{s,0}= K_s *\phi_s ,\qquad K_{s,j}  = \sum_{\ell=\Delta(j-1)+1}^{\Delta j}K_{s}*\varphi_{s-\ell}, \,j\geq 1. \end{equation}
for some large integer $\Delta$ which will be specified during the proof. Unless otherwise specified, the implied constants appearing below are independent of $\Delta$ but may depend on $\alpha>0$ from \eqref{assker} and on the dimension.
Note that  $K_{s,j}$ are supported in $\{|x|<2^s\}$.
Define now for all $j\geq 0$
\[
[K^{j}]=\{ (x,y)\mapsto K_{s,j}(x-y):s \in \mathbb Z\}
\]
and note that, with unconditional convergence
\begin{equation} \label{LP2}
K_s(y) = \sum_{j=0}^\infty K_{s,j}(y), \qquad y \in \mathbb \R^d.
\end{equation}
The following computation is carried out in \cite[Section 3]{HRT}. \begin{lemma} \label{kernorms1} There holds
\begin{equation}
\label{varpi}
\varpi_{\ell}([K^j]) \lesssim \min \{1, 2^{\Delta j-\ell}\}
\end{equation}
 and as a consequence
  $
\|[K^{j}]\|_{\mathsf{Dini}} \lesssim  1+\Delta j$ for all $ j\geq 0.
$
\end{lemma}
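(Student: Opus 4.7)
The plan is to establish \eqref{varpi} by producing two complementary estimates on $\varpi_\ell([K^j])$ and taking the minimum. For convolution kernels $(x,y)\mapsto K_{s,j}(x-y)$, the definition \eqref{NORMS} reduces to controlling
\[
\sup_{s\in\mathbb Z} 2^{sd}\sup_{|h|\leq 2^{s-\ell-3}}\|K_{s,j}(\cdot)-K_{s,j}(\cdot-h)\|_\infty,
\]
so the proof boils down to estimating the modulus of continuity of $K_{s,j}$ in two distinct ways.

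The principal step is the smoothness bound $\varpi_\ell([K^j])\lesssim 2^{\Delta j-\ell}$. The key observation is the representation $K_{s,j}=K_s*\eta_{s,j}$ with
\[
\eta_{s,j}:=\sum_{m=\Delta(j-1)+1}^{\Delta j}\varphi_{s-m}\quad (j\geq 1),\qquad \eta_{s,0}:=\phi_s.
\]
Young's inequality gives $\|\nabla K_{s,j}\|_\infty\leq \|K_s\|_\infty\|\nabla\eta_{s,j}\|_1$, and since each $\varphi_{s-m}$ is an $L^1$-normalized bump at scale $2^{s-m}$, the geometric sum
\[
\|\nabla\eta_{s,j}\|_1\lesssim \sum_{m=\Delta(j-1)+1}^{\Delta j}2^{m-s}\lesssim 2^{\Delta j-s}
\]
is controlled by its finest-scale contribution $m=\Delta j$ (the analogous computation gives $\|\nabla\phi_s\|_1\lesssim 2^{-s}$ for $j=0$). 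Combining with $\|K_s\|_\infty\leq 2^{-sd}$ from \eqref{assker} and the mean value estimate $\|K_{s,j}(\cdot)-K_{s,j}(\cdot-h)\|_\infty\leq|h|\|\nabla K_{s,j}\|_\infty$ with $|h|\leq 2^{s-\ell-3}$ produces the advertised $2^{\Delta j-\ell}$. For the complementary uniform bound $\varpi_\ell([K^j])\lesssim 1$, I would derive it from $\|[K^j]\|\lesssim 1$ via the trivial inequality $\|K_{s,j}(\cdot)-K_{s,j}(\cdot-h)\|_\infty\leq 2\|K_{s,j}\|_\infty$.

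Once \eqref{varpi} is in hand the Dini bound is purely summational: splitting at $\ell=\Delta j$,
\[
\sum_{\ell\geq 0}\varpi_\ell([K^j])\lesssim \sum_{0\leq\ell\leq\Delta j}1 + \sum_{\ell>\Delta j}2^{\Delta j-\ell}\lesssim 1+\Delta j,
\]
and adding $\|[K^j]\|\lesssim 1$ concludes. I expect the actual technical difficulty of the lemma to lie in this uniform estimate $2^{sd}\|K_{s,j}\|_\infty\lesssim 1$: the naive Young inequality yields only $\|K_{s,j}\|_\infty\leq \|K_s\|_\infty\|\eta_{s,j}\|_1\lesssim \Delta\cdot 2^{-sd}$, producing a spurious factor of $\Delta$ that would be fatal for the Dini bound. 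To remove it I would pass to the Fourier side and use the two-sided decay $|\widehat{K_s}(\xi)|\leq\min\{|2^s\xi|^\alpha,|2^s\xi|^{-\alpha}\}$ furnished by \eqref{assker}; this decay ensures that $\int|\widehat{K_s}(\xi)\widehat{\eta_{s,j}}(\xi)|\,\d\xi\lesssim 2^{-sd}$ uniformly in $\Delta$, since the $\min$-structure allows only finitely many frequency annuli $m$ to contribute non-negligibly to the projector $\widehat{\eta_{s,j}}$.
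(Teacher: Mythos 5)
Your two-part strategy (a smoothness bound plus a uniform bound, then sum) is exactly right, and your smoothness estimate $\varpi_\ell([K^j])\lesssim 2^{\Delta j-\ell}$ via $\|\nabla\eta_{s,j}\|_1\lesssim 2^{\Delta j-s}$ is correct. However, there is a genuine gap in the uniform estimate, and your proposed fix does not work. You correctly identify that the issue is to prove $2^{sd}\|K_{s,j}\|_\infty\lesssim 1$ without the spurious factor $\Delta$, but the Fourier-side argument you sketch is false in the generality needed. Indeed, writing $\eta=2^s\xi$, one has
\[
\int_{\R^d}\big|\widehat{K_s}(\xi)\,\widehat{\eta_{s,j}}(\xi)\big|\,\d\xi
\lesssim 2^{-sd}\int_{\R^d}\min\{|\eta|^\alpha,|\eta|^{-\alpha}\}\,\big|\widehat{\psi_j}(\eta)\big|\,\d\eta
\sim 2^{-sd}\,2^{\Delta j(d-\alpha)},
\]
since $\widehat{\psi_j}$ is an $O(1)$ band-pass multiplier essentially concentrated in $2^{\Delta(j-1)}\lesssim|\eta|\lesssim 2^{\Delta j}$ while the kernel factor decays only like $|\eta|^{-\alpha}$ there. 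For the rough homogeneous case $\alpha=1/d<d$, this grows geometrically in $\Delta j$, so passing through $\|\widehat{K_{s,j}}\|_1$ is hopeless; the sup norm of $K_{s,j}$ is genuinely much smaller than the $L^1$ norm of its Fourier transform here, because of cancellation.

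The correct mechanism, which you missed, is that the inner sum \emph{telescopes}: since $\phi_k=\sum_{\ell\geq k}\varphi_\ell$, one has
\[
\eta_{s,j}=\sum_{\ell=\Delta(j-1)+1}^{\Delta j}\varphi_{s-\ell}=\phi_{s-\Delta j}-\phi_{s-\Delta(j-1)},
\]
and since $\phi_k(\cdot)=2^{-kd}\phi(2^{-k}\cdot)$ for a fixed Schwartz function $\phi$, one gets $\|\eta_{s,j}\|_1\leq 2\|\phi\|_1\lesssim 1$ uniformly in $\Delta$ and $j$. Young's inequality then yields $\|K_{s,j}\|_\infty\leq\|K_s\|_\infty\|\eta_{s,j}\|_1\lesssim 2^{-sd}$, hence $\|[K^j]\|\lesssim 1$ and $\varpi_\ell([K^j])\lesssim 1$ with no loss. (The same telescoping also recovers your gradient estimate in one line, as $\|\nabla\eta_{s,j}\|_1\leq\|\nabla\phi_{s-\Delta j}\|_1+\|\nabla\phi_{s-\Delta(j-1)}\|_1\lesssim 2^{\Delta j-s}$, though your geometric sum argument was already fine.) Note your instinct that the loose factor $\Delta$ would be fatal is correct: it would inflate $\|[K^j]\|_{\mathsf{Dini}}$ to $\Delta^2 j$, and tracking this through Lemma \ref{ppestimate} and the final summation in Section \ref{S4} with $\Delta\sim\eps^{-1}$ would give a sparse bound of order $\eps^{-2}$ rather than $\eps^{-1}$, weakening the weighted corollaries. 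Finally, for context, the paper itself does not re-derive this lemma but defers the computation to \cite[Section~3]{HRT}; your goal of giving a self-contained proof is reasonable, but the telescoping observation is the missing ingredient.
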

It is also well-known that
\begin{equation}
\label{untruncest}
\sup_{t_1, t_2\in \mathbb Z} \left\| f\mapsto T[K^j]f(\cdot,t_1,t_2)=\sum_{t_1<s\leq t_2} K_{s,j}*f \right\|_{L^2(\R^d)} \lesssim 2^{-\alpha \Delta(j-1)};
\end{equation}
however, we need a stronger estimate on the pointwise maximal truncations, which is implicit in \cite{DR}.
\begin{lemma} \label{kernorms3}  There holds\[
\|[K^{0}]\|_{2,\star} \lesssim 1, \qquad \displaystyle\|[K^{j}]\|_{2,\star} \lesssim 2^{- {\frac\alpha2 \Delta (j-1)}}, \quad j\geq 1.\]
\end{lemma}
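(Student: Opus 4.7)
The plan is to treat $j=0$ and $j\geq 1$ separately, and in both cases to combine a Cotlar-type pointwise inequality for maximal truncations of Calder\'on--Zygmund operators with the Fourier-side input underlying \eqref{untruncest}. For $j=0$ I would first note that by Lemma \ref{kernorms1} the family $[K^0]$ has Dini constant $\|[K^0]\|_{\mathsf{Dini}}\lesssim 1$, and that \eqref{untruncest} at $j=0$ (or a direct Plancherel argument using the almost-disjoint Fourier supports of the $\widehat{K_{s,0}}$) gives $\|T[K^0]\|_{L^2\to L^2}\lesssim 1$ uniformly in the truncation parameters. The standard Cotlar inequality $T_\star[K^0]f(x)\lesssim M\bigl(T[K^0]f\bigr)(x)+\|[K^0]\|_{\mathsf{Dini}}Mf(x)$, combined with the $L^2$-boundedness of $M$, then yields $\|[K^0]\|_{2,\star}\lesssim 1$.

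For $j\geq 1$ the analogous Cotlar step alone gives only the crude bound $\|[K^j]\|_{2,\star}\lesssim \|T[K^j]\|_{L^2\to L^2}+\|[K^j]\|_{\mathsf{Dini}}\lesssim 1+\Delta j$, which does not decay in $j$. To extract the factor $2^{-\frac{\alpha}{2}\Delta(j-1)}$ I would interpolate this crude maximal bound against the sharp non-maximal decay $\|T[K^j]\|_{L^2\to L^2}\lesssim 2^{-\alpha\Delta(j-1)}$ coming from \eqref{untruncest}. The structural fact to exploit is that $\widehat{K_{s,j}}$ is essentially supported in the annulus $\{|\xi|\sim 2^{\Delta j-s}\}$, so for each $\xi$ only $O(\Delta)$ of the $\widehat{K_{s,j}}(\xi)$ are nontrivial. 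This gives both the uniform pointwise bound $\bigl|\sum_{s\leq N}\widehat{K_{s,j}}(\xi)\bigr|\lesssim 2^{-\alpha\Delta(j-1)}$ and a square-function type estimate $\bigl\|(\sum_s|K_{s,j}\ast f|^2)^{1/2}\bigr\|_2\lesssim 2^{-\alpha\Delta(j-1)}\|f\|_2$. Writing $T_\star[K^j]f\leq 2\sup_N|U_Nf|$ with $U_Nf=\sum_{s\leq N}K_{s,j}\ast f$, the maximal partial sum can then be controlled in $L^2$ by a Menshov--Rademacher-style argument played off against the crude $L^2$ maximal bound, yielding the half-exponent decay.

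The hard part is precisely this last step: converting the sharp non-maximal $L^2$ decay into a comparable bound for the maximal partial sums $\sup_N|U_N f|$ while losing only half of the exponent---rather than a full power of $\Delta j$, or additional logarithmic factors. The argument I have in mind is the pointwise/$L^2$ square function technique of Duoandikoetxea--Rubio de Francia \cite{DR}, which quasi-orthogonalizes the $U_N$ across both the parameter $N$ and the Littlewood--Paley level $j$ using the annular frequency support of the $K_{s,j}$; up to renaming of parameters the statement of Lemma \ref{kernorms3} coincides with the core maximal estimate implicit in that $L^p$ theory.
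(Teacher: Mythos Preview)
Your $j=0$ argument via Cotlar is fine and in fact is a black-box version of what the paper does: the paper's decomposition $I_{1,s}+I_{2,s}+I_{3,s}$ with an approximate identity $\beta_s$ is precisely the proof of the Cotlar inequality, specialized to this setting.

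For $j\geq 1$, however, your description of the mechanism is off. Neither ``interpolate the crude maximal bound against the sharp non-maximal decay'' nor ``Menshov--Rademacher'' is what produces the half-exponent. There is no meaningful interpolation between a maximal and a non-maximal $L^2$ bound here, and a Menshov--Rademacher argument would cost a logarithm in the number of partial sums, not recover a geometric factor. The paper (following \cite{DR}) instead writes, for each truncation level $s$,
\[
T[K^j]f(\cdot,s,t_2)=\beta_s*\big(T[K^j]f(\cdot,t_1,t_2)\big)-\beta_s*\big(T[K^j]f(\cdot,t_1,s)\big)+(\delta-\beta_s)*\big(T[K^j]f(\cdot,s,t_2)\big),
\]
where $\beta_s$ is a bump at scale $2^s$. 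The first piece is dominated by $\mathrm{M}(T[K^j]f(\cdot,t_1,t_2))$ and inherits the full decay from \eqref{untruncest}. The other two pieces are controlled by bounding $\sup_s$ by $(\sum_s|\cdot|^2)^{1/2}$ and applying Plancherel; the point is that the multipliers $\widehat\beta(2^s\xi)\sum_{k\leq s}\widehat{K_{k,j}}(\xi)$ and $(1-\widehat\beta(2^s\xi))\sum_{k>s}\widehat{K_{k,j}}(\xi)$ carry an extra decay factor in $|2^s\xi|$ (from $\widehat\beta$ or $1-\widehat\beta$) that makes the $\ell^2$-in-$s$ sum converge. The half-exponent arises because in the third piece one must split $|2^k\xi|^{-\alpha}=|2^k\xi|^{-\alpha/2}\cdot|2^k\xi|^{-\alpha/2}$, spending one half on summability in $k$ and keeping the other as the residual decay $2^{-\alpha\Delta(j-1)/2}$.

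So your final sentence pointing to the DR square-function technique is the right destination, but the route you sketch to get there (interpolation, Menshov--Rademacher) would not arrive. The missing idea is the smoothing $\beta_s$ that converts the sup over $s$ into something for which the $\ell^2$-in-$s$ Plancherel argument has summable Fourier symbols.
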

\begin{proof} Let $\beta$ be a smooth compactly supported function on $\R^d$ normalized to have $\l \beta,1\r=1$, and write $\beta_s(\cdot)=2^{-sd} \beta(2^{-s}\cdot)$. By usual arguments it suffices to estimate
the $L^2(\R^d) $ operator norm of
\[
f\mapsto \sup_{t_1\leq s\leq t_2} T[K^j]f(\cdot, s,t_2)
\]
uniformly over $t_1,t_2 \in \mathbb Z$.
We then have \begin{equation}
\label{pfmt1}
\begin{split}
T[K^j]f(\cdot, s,t_2)&= \beta_{s} *\big(\sum_{t_1<k\leq t_2}K_k * (\phi_{k-\Delta j}-\phi_{k-\Delta(j-1)})\big)*f\\
&- \beta_{s}*\big(\sum_{t_1<k\le s}K_k * (\phi_{k-\Delta j}-\phi_{k-\Delta(j-1)})\big)*f\\
&+ (\delta- \beta_{s})*\big(\sum_{s<k \leq t_2}K_k * (\phi_{k-\Delta j}-\phi_{k-\Delta(j-1)})\big)*f=:I_{1,s}+I_{2,s}+I_{3,s},
\end{split}
\end{equation}
For $I_{1,s}$, by \eqref{untruncest} we have
\begin{equation}
\label{pfmt2}
\left\|\sup_{t_1\leq s \leq t_2} |I_{1,s}|\right\|_{2}\lesssim  \left\|\mathrm{M}\big(T[K^j] f(\cdot, t_1,t_2)\big)\right\|_{2}\lesssim  2^{-\alpha \Delta(j-1)}\|f\|_{2}.
\end{equation}
 Next we estimate the second and third contribution in \eqref{pfmt1}. We have, using the third assertion in  \eqref{assker}, that
\begin{align*}
&\quad \big|\sum_{t_1<k\leq s}\hat \beta(2^s \xi) \hat K_k(\xi)(\hat \phi(2^{k-\Delta j}\xi)-\hat \phi(2^{k-\Delta (j-1)}\xi))\big|\\
&\lesssim\big|\sum_{t_1<k\leq s}\min \{1, |2^s \xi|^{-1}\}\cdot \min\{|2^k \xi|^\alpha, |2^k \xi|^{-\alpha}\}\cdot \sum_{\ell=\Delta(j-1)}^{\Delta j} \min\{|2^{k-\ell}\xi|, |2^{k-\ell}\xi|^{-1}\}\big|\\
&\lesssim
\begin{cases}
  2^{-\alpha\Delta(j-1)} |2^s \xi|^{-1}, & |2^s \xi|>1,\\
2^{-\Delta(j-1)} |2^s \xi|, & |2^s\xi|\le 1
\end{cases}
\end{align*}
A similar computation reveals
\begin{align*}
\big|\sum_{k\ge s}&(1- \hat \beta(2^s \xi))\hat K_k(\xi) ( \hat \phi(2^{k-\Delta j}\xi)-\hat \phi(2^{k-\Delta(j-1)}\xi))\big|\le 2^{-\alpha \Delta (j-1)/2} \min\{|2^s\xi|, |2^s\xi|^{-\alpha/2}\}.
\end{align*}
Thus by Plancherel, for $m=2,3$ we have  \begin{equation}
\label{pfmt3}
 \left\|\sup_{t_1\leq s\leq t_2} |I_{m,s}| \right\|_{2}\le  \left\|\big(\sum_{s=t_1}^{t_2} |I_{m,s}|^2\big)^{\frac 12}  \right\|_2\lesssim 2^{-\alpha\Delta (j-1)/2}\|f\|_{2}
\end{equation}
and the proof of the Lemma is completed by putting together \eqref{pfmt1}--\eqref{pfmt3}.
\end{proof}

We are now ready to verify the assumptions \eqref{lest} for the truncated forms $\mathcal Q[K]_{t_1}^{t_2}$ associated to a family $[K]$ satisfying the assumptions \eqref{assker}.
By virtue of Lemma \ref{kernorms1} and \ref{kernorms3}, Lemma \ref{ppestimate} applied to the families $[K^j]$ for the value $\Delta=2\eps^{-1}\alpha^{-1}$   yields that
\[
\begin{split}  C_{\mathsf L }[K^j](p,p)  \lesssim \left( \|[K^j]\|_{\mathsf{Dini}} \vee |[K^j]\|_{2,\star}\right)^{1-\eps} \left(\|[K^j]\|_{2,\star}\right)^{\eps}    \lesssim   (1+\Delta j)^{1-\eps}2^{- \frac{ \alpha}{2}  \Delta (j-1) \eps}\lesssim \eps^{-1}(1+ j)2^{-j}.  \end{split}
\]
Therefore using linearity in the kernel family $[K]$ of the truncated forms $\mathcal Q_{t_1}^{t_2}[K] $ and the decomposition \eqref{LP}-\eqref{LP2}
\begin{equation}
\label{finalest}
 C_{\mathsf L }[K](p,p)  \leq \sum_{j=0}^{\infty} C_{\mathsf L }[K^j](p,p) \lesssim \eps^{-1}
\end{equation}
which, together with the previous observations, completes the proof of Theorem \ref{theoremRK}.

\section{Extension to vector-valued functions} \label{svv}
In this section, we suitably extend the abstract domination principle  Theorem \ref{theoremABS} to (a suitably defined) $\mathbb F^n$-valued extension, with $\mathbb F\in \{\R,\mathbb C\}$, of the singular integrals of Section \ref{S2}. In fact, the $\mathbb C^{n}$-valued case  can be recovered by suitable interpretation of the  $\R^{2n}$-valued one; thus, it suffices to consider $\mathbb F=\R$.  \subsection{Convex body domination}
Let $1\leq p<\infty$. To each $f\in L^p_{\mathrm{loc}}(\R^d; \R^n)$ and   each cube $Q$ in $\R^d$,  we associate the closed convex symmetric subset of $\R^n$ 
\begin{equation} \label{convexbody}
\l f \r_{p,Q}:= \left\{\frac{1}{|Q|}\int_Q f \varphi \, \d x: \varphi\in \Phi_{p'}(Q)  \right\}\subset \R^n, 
\end{equation}
where we used the notation $\Phi_{q}(Q):=\{\varphi: Q\to \R, \l\varphi\r_{q,Q}\leq 1 \}$.  It is easy to see that
\[
\sup_{\xi \in \l f \r_{p,Q} } |\xi | \leq \l |f|_{\R^n} \r_{p,Q}
\]
where $\l \cdot \r_{p,Q}$ on the right hand side is being interpreted in the usual fashion. A slightly less obvious fact that we will use below is recorded in the following simple lemma, which involves the notion of \emph{John ellipsoid} of a closed convex symmetric set $K$. This set, which we denote by $\mathcal E_K$, stands for  the solid ellipsoid of largest volume contained in $K$; in particular, the John ellipsoid of $K$ has the property that
\begin{equation}
\label{jell}
\mathcal E_K \subset K \subset \sqrt{n}\mathcal E_K
\end{equation}
where, if  $A\subset \R^n$ and $c\geq 0$, by $cA$ we mean the set $\{ca: a \in A\}$. {We also apply this notion in the degenerate case as follows: if the linear span of $K$ is a $k$-dimensional subspace $V$ of $\R^n$, we denote by $\mathcal E_K$ the solid ellipsoid of largest $k$-dimensional volume contained in $K$. In this case, \eqref{jell} holds with $\sqrt{k}$ in place of $\sqrt{n}$, but then it also holds as stated, since $k\leq n$ and $\mathcal E_K$ is also convex and symmetric.}

\begin{lemma}
\label{simplelemma} Let $f=(f_1,\ldots,f_n)\in L^p_{\mathrm{loc}}(\R^d; \R^n)$ and suppose that $\mathcal E_{\l f\r_{p,Q}}=B_1$, where $B_{\rho}=\{a\in \R^n: |a|_{\R^n}\leq \rho\}$. Then  
\[
\sup_{j=1,\ldots, N} \l f_j \r_{p,Q} \leq \sqrt{n}.
\]
\end{lemma}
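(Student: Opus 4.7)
The plan is straightforward: exploit the duality characterization of $\langle f_j\rangle_{p,Q}$ together with the John-ellipsoid inclusion \eqref{jell} for the convex body $\langle f\rangle_{p,Q}$.

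First I would recall the scalar-level identity
\[
\langle f_j \rangle_{p,Q} = \sup_{\varphi \in \Phi_{p'}(Q)} \frac{1}{|Q|}\int_Q f_j\,\varphi\,\d x,
\]
which follows from the sharpness of H\"older's inequality and the fact that $\Phi_{p'}(Q)$ is symmetric (so the absolute value can be dropped). Thus to bound $\langle f_j\rangle_{p,Q}$, it suffices to bound the $j$-th component of every vector of the form $v_\varphi := \frac{1}{|Q|}\int_Q f\,\varphi\,\d x$ with $\varphi \in \Phi_{p'}(Q)$.

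Next, the definition \eqref{convexbody} says precisely that $v_\varphi \in \langle f\rangle_{p,Q}$. By the hypothesis $\mathcal E_{\langle f\rangle_{p,Q}}=B_1$ and the second inclusion in \eqref{jell}, one has $\langle f\rangle_{p,Q} \subset \sqrt{n}\,B_1$, so $|v_\varphi|_{\R^n} \leq \sqrt{n}$. Since $(v_\varphi)_j \leq |v_\varphi|_{\R^n}$ for any coordinate projection, taking the supremum over $\varphi\in\Phi_{p'}(Q)$ yields $\langle f_j\rangle_{p,Q}\leq \sqrt n$, as required.

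There is essentially no obstacle: the lemma is a packaging of John's theorem plus $L^p$--$L^{p'}$ duality. The only subtle point worth a sentence in the written proof is the degenerate case (when $\langle f\rangle_{p,Q}$ spans a proper subspace), which the authors already address in the paragraph preceding the statement by noting that the inclusion $\langle f\rangle_{p,Q}\subset \sqrt n\,\mathcal E_{\langle f\rangle_{p,Q}}$ continues to hold in that situation; so the argument above applies verbatim.
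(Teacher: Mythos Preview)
Your proposal is correct and follows essentially the same route as the paper's own proof: both use the duality identity $\langle f_j\rangle_{p,Q}=\sup_{\varphi\in\Phi_{p'}(Q)}\frac{1}{|Q|}\int_Q f_j\varphi$, observe that the resulting vector lies in $\langle f\rangle_{p,Q}$, and then apply the John-ellipsoid inclusion \eqref{jell} to bound its Euclidean norm by $\sqrt{n}$. The only cosmetic difference is that the paper picks an extremizer $\varphi_\star$ realizing the supremum, whereas you bound uniformly over all $\varphi\in\Phi_{p'}(Q)$; this is inconsequential.
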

\begin{proof} By definition of $\l \cdot \r_{p,Q} $ for scalar functions, 
\[
\l f_j \r_{p,Q}= \sup_{\varphi \in   \Phi_{p'}(Q)}  \frac{1}{|Q|}\int_Q f_j \varphi \, \d x =  \frac{1}{|Q|}\int f_j \varphi_\star \, \d x   \]
for a suitable $\varphi_\star\in \Phi_{p'}(Q) $. Thus $ \l f_j \r_{p,Q}$ is the $j$-th component of the vector
\[f_{\varphi_\star}=\frac{1}{|Q|}\int_Q f_j \varphi_\star \, \d x \in \l f \r_{p,Q}.
\]
By \eqref{jell}, and in consequence of the assumption,  $f_{\varphi_\star}\in B_{\sqrt n}$, which proves the assertion.\end{proof}
We now define the sparse $(p_1,p_2)$ norm of a linear operator $T$  mapping the space $L^\infty_0(\R^d; \R^n)$  into  locally integrable, $\R^n$-valued functions, as the least constant $C>0$ such that for each pair $f_1,f_2\in L^\infty_0(\R^d; \R^n)$ we may find a $\frac12$-sparse collection $\mathcal S$ such that 
\begin{equation}
\label{sparsevv}
|\l Tf_1,f_2 \r| \leq C \sum_{Q\in  \mathcal{S}} |Q| \l f_1 \r_{p_1,Q}  \l f_2 \r_{p_2,Q}.
\end{equation}
We interpret the rightmost product in the above display   as  the right endpoint of the Minkowski product $AB=\{\l a,b\r_{\R^n}: a\in A, b\in B\}$ of the  closed convex symmetric sets $A,B\subset \R^n$, which is a closed symmetric interval.
We use the same familiar notation $\|T\|_{(p_1,p_2),\mathsf{sparse}}$ for such norm.  

Within such framework, we have the following extension of Theorem \ref{theoremABS}.
 \begin{theorem}
\label{theoremABSvv}
Let $[K]=\{K_s:s\in \mathbb Z\}$ be a family of real-valued functions satisfying    \eqref{assker2}, \eqref{asstrunc}, and \eqref{lest} for some $1<r<\infty, 1\leq p_1,p_2<\infty$. Then the $\R^n$-valued extension of the  linearized truncations $T[K]^{t_2}_{t_1}$ defined  in  \eqref{tisl} admits a $(p_1,p_2)$ sparse bound, namely
\begin{equation}
\label{thABSest}
\left\| T[K]_{t_1}^{t_2} \otimes \mathrm{Id}_{\R^n} \right\|_{\left(p_1,p_2\right),\mathsf{sparse}}
\lesssim
\|[K]\|_{r,\star} +C_{\mathsf{L}}[K](p_1,p_2)
\end{equation}
with implicit constant possibly depending on $r,p_1,p_2$ and the dimensions $d,n$ only, and in particular uniform over bounded measurable truncation functions $t_1,t_2$.
\end{theorem}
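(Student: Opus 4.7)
\smallskip

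\textbf{Proof plan.} My plan is to repeat the three-step iterative scheme of Theorem \ref{theoremABS} (auxiliary estimate, initialization, and iterative Calder\'on-Zygmund decomposition), with the stopping rule now adapted to the convex body setting via the John ellipsoids $\mathcal{E}_{\langle f_j\rangle_{p_j,3Q}}$, in the spirit of \cite{NPTV}. As in the scalar case, it suffices to control $\langle T[K]_{t_1}^{t_2}\otimes\mathrm{Id}_{\R^n} f_1,f_2\rangle$ for a fixed pair $f_1,f_2\in L^\infty_0(\R^d;\R^n)$; after the same initialization, I reduce to a large dyadic cube $Q_0$ and set $\mathcal{S}_0=\{Q_0\}$.

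The main new ingredient is the stopping step. At each $Q\in\mathcal{S}_k$, let $\mathcal{E}_Q^{(j)}$ denote the John ellipsoid of $\langle f_j\rangle_{p_j,3Q}$, write its principal orthonormal axes and semi-axis lengths as $\{e_\alpha^{(j,Q)}\}_{\alpha=1}^n$ and $\{\lambda_\alpha^{(j,Q)}\}_{\alpha=1}^n$ (with the degenerate case handled as in the parenthetical remark following \eqref{jell}), and introduce the normalized scalar components $\tilde f_{j,\alpha}:=(\lambda_\alpha^{(j,Q)})^{-1}\langle f_j,e_\alpha^{(j,Q)}\rangle_{\R^n}$. By Lemma \ref{simplelemma}, each such component satisfies $\langle\tilde f_{j,\alpha}\rangle_{p_j,3Q}\leq\sqrt{n}$. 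In analogy with \eqref{EQ}, I define the stopping set
\[
E_Q:=\Big\{x\in 3Q:\max_{j,\alpha}\mathrm{M}_{p_j}\big(\tilde f_{j,\alpha}\cic{1}_{3Q}\big)(x)>C\sqrt{n}\Big\}
\]
and build $\mathcal{S}_{k+1}$ from the maximal dyadic $L$ with $9L\subset\bigcup_{Q\in\mathcal{S}_k}E_Q$. For $C$ large enough depending on $n,d$, the $\frac12$-sparseness of $\mathcal{T}_{\bar k}=\bigcup_{\kappa\leq\bar k}\mathcal{S}_\kappa$ persists, and each $\tilde f_{j,\alpha}$ (outside the stopped region) has $\mathcal{Y}_{p_j}(\mathcal{S}_{k+1}(Q))$-norm $\lesssim_n 1$.

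The descent estimate proceeds by exploiting the bilinearity of $\mathcal{Q}[K]_{t_1}^{t_2}$ in both arguments, writing
\[
\mathcal{Q}[K]_{t_1}^{t_2}(f_1,f_2)=\sum_{\alpha,\beta=1}^n\lambda_\alpha^{(1,Q)}\lambda_\beta^{(2,Q)}\langle e_\alpha^{(1,Q)},e_\beta^{(2,Q)}\rangle_{\R^n}\,\mathcal{Q}[K]_{t_1}^{t_2}(\tilde f_{1,\alpha},\tilde f_{2,\beta}).
\]
To each scalar pair I apply the stopping estimate \eqref{stopiter1tilde} proved in Theorem \ref{theoremABS}, producing the principal term $C\mathbf{\Theta}_{[K],p_1,p_2}|Q|\langle\tilde f_{1,\alpha}\rangle_{p_1,3Q}\langle\tilde f_{2,\beta}\rangle_{p_2,3Q}$ plus a residue expressed as a sum over $L\in\mathcal{S}_{k+1}(Q)$. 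After multiplying by the weights and summing over $\alpha,\beta$, the principal contribution is $\lesssim_n\mathbf{\Theta}_{[K],p_1,p_2}|Q|\sum_{\alpha,\beta}\lambda_\alpha^{(1,Q)}\lambda_\beta^{(2,Q)}|\langle e_\alpha^{(1,Q)},e_\beta^{(2,Q)}\rangle_{\R^n}|$. The crucial observation is that $\lambda_\alpha^{(1,Q)}e_\alpha^{(1,Q)}\in\mathcal{E}_Q^{(1)}\subset\langle f_1\rangle_{p_1,3Q}$ and analogously for $f_2$, so each individual term $\lambda_\alpha^{(1,Q)}\lambda_\beta^{(2,Q)}|\langle e_\alpha^{(1,Q)},e_\beta^{(2,Q)}\rangle|$ is bounded by the right endpoint of the Minkowski interval $\langle f_1\rangle_{p_1,3Q}\langle f_2\rangle_{p_2,3Q}$; the full $n^2$-term sum is therefore controlled by $n^2$ times this endpoint. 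The residue telescopes into the next iteration exactly as in the scalar argument.

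\textbf{Main obstacle.} The delicate step is the aggregation in the last paragraph: the normalized components $\tilde f_{j,\alpha}$ and the chosen orthonormal frames $\{e_\alpha^{(j,Q)}\}$ both depend on $Q$, so the scalar sparse bounds cannot be applied globally and have to be produced at each iteration level via the localized estimate \eqref{stopiter1tilde}. Tracking the dimensional constants that arise both in the $\sqrt n$ factor of Lemma \ref{simplelemma} and in the bilinear expansion (bounded via the John-ellipsoid containment $\mathcal{E}\subset\langle f\rangle\subset\sqrt n\,\mathcal{E}$) is the chief bookkeeping difficulty; since only finitely many ($n^2$) scalar pairs enter per cube, the final constant depends only on $n,d,p_1,p_2,r$.
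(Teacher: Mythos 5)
Your proposal is correct and follows essentially the same route as the paper's proof. Both arguments normalize the vector data by the John ellipsoids of $\l f_j\r_{p_j,3Q}$ at each stopping scale, control the normalized components via Lemma \ref{simplelemma}, run the scalar auxiliary estimate on those components, and re-sum with the ellipsoid weights, absorbing the resulting cross-terms into the right endpoint of the Minkowski product; the paper packages the normalization as a change of variable $A_j\in\mathrm{GL}(\R^n)$, which is the same object as your principal-axis parametrization $(\lambda_\alpha,e_\alpha)$. The only other cosmetic difference is that you define the stopping set through the componentwise maximal functions $\mathrm{M}_{p_j}(\tilde f_{j,\alpha}\cic{1}_{3Q})$ rather than the set-inclusion criterion \eqref{EQ2} handled by Lemma \ref{vvmaxth}; these are equivalent up to $n$- and $d$-dependent constants, and both yield the $\tfrac12$-sparseness of the stopping family together with the bound $\|\tilde f_j\|_{\mathcal Y_{p_j}(\mathcal Q)}\lesssim 1$ needed to apply \eqref{stopiter0}.
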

\begin{remark}The objects \eqref{convexbody} for $p=1$ have been introduced  in this context by Nazarov, Petermichl, Treil and Volberg \cite{NPTV}, where  sparse domination of vector valued singular integrals by  the Minkowski sum of convex bodies \eqref{convexbody}   is employed towards matrix-weighted norm inequalities.  In \cite{CuDPOu2}, a similar result, but in the dual form \eqref{sparsevv} with $p_1=p_2=1$ is proved for dyadic shifts via a different iterative technique which is a basic version of the proof of Theorem \ref{theoremABS}. Subsequent  developments in vector valued sparse domination include the sharp estimate for the dyadic square function \cite{HPV}. The usage of exponents $p>1$ in \eqref{convexbody},  necessary to effectively tackle    rough singular integral operators, is a novelty of this paper. 
\end{remark}
\subsection{Matrix-weighted norm inequalities} We now detail an application of Theorem \ref{thABSest} to matrix-weighted norm inequalities for    maximally truncated, rough singular integrals. In particular,  Corollary \ref{theoremW} from the introduction is a particular case of Theorem \ref{theoremW} below.

 The classes of weights we are concerned with are the following.  A pair of matrix-valued weights 
 $W,V\in L^1_{\mathrm{loc}}(\R^d; \mathcal L(\R^n))$ is said to satisfy the (joint) \emph{matrix $A_2$ condition} if  \begin{equation*}
[W,V]_{A_2}:=\sup_{Q}\left\|\sqrt{W_Q} \sqrt{V_Q}\right\|_{\mathcal L(\R^n)}^2<\infty
\end{equation*}
supremum being taken over all cubes $Q\subset \R^d$  and 
\[
W_Q:= \frac{1}{|Q|}\int_{Q} W(x) \, \d x  \in \mathcal L(\R^n).
\]
We simply write $[W]_{A_2}:=[W,W^{-1}]_{A_2}$. We further  introduce a directional matrix $A_\infty $ condition, namely 
\[
[W]_{A_\infty}:=   \sup_{\xi\in S^{n-1}} [ \l W \xi,\xi\r_{\R^n}]_{A_\infty} \leq \sup_{\xi\in S^{n-1}} [ \l W \xi,\xi\r_{\R^n}]_{A_2} \leq [W]_{A_2}.
\]
where the second inequality is the content of  \cite[Lemma 4.3]{NPTV}.
\begin{theorem} \label{theoremW}Let  $W,V\in L^1_{\mathrm{loc}}(\R^d; \mathcal{L}(\R^n))$ be a pair of matrix weights, and $T_{\Omega,\delta}$ be defined by \eqref{Tomegas}, with in particular $\|\Omega\|_\infty\leq 1$. Then
\begin{equation}
\label{theoremWest}
\left\| \sup_{\delta>0} \left| W^{\frac12} T_{\Omega,\delta} (V^{\frac12}f ) \right|_{\R^n} \right\|_{L^2(\R^d)} \lesssim \max\{[W]_{A_\infty}, [V]_{A_\infty} \} \sqrt{[W,V]_{A_2} [W]_{A_\infty} [V]_{A_\infty} }\left\|f \right\|_{L^2(\R^d; \R^n)}.
\end{equation} 
 \end{theorem}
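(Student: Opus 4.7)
The plan is to deduce Theorem \ref{theoremW} from the convex-body sparse bound of Theorem \ref{theoremABSvv} by combining it with matrix-weighted estimates for sparse forms, following the philosophy introduced by Nazarov--Petermichl--Treil--Volberg \cite{NPTV}. Concretely, Theorem \ref{theoremABSvv} together with Theorem \ref{theoremRK} (and the scalar pointwise reduction $T_{\Omega,\star} f\lesssim Mf+T_\star f$ from the Remark following Theorem \ref{theoremRK}, applied component-wise to dispose of the vectorial maximal part) yields the following sparse bound: for each $0<\eps<1$, each measurable truncation $\delta:\R^d\to(0,\infty)$, and each pair $F_1,F_2\in L^\infty_0(\R^d;\R^n)$ of compactly supported bounded vector-valued functions, there exists a $\frac12$-sparse collection $\mathcal{S}$ such that
\begin{equation*}
\big|\l T^{(n)}_{\Omega,\delta(\cdot)} F_1, F_2\r\big|\leq \frac{C}{\eps}\sum_{Q\in\mathcal{S}}|Q|\cdot[\l F_1\r_{1+\eps,Q}][\l F_2\r_{1+\eps,Q}],
\end{equation*}
where $[\,\cdot\,][\,\cdot\,]$ denotes the endpoint of the Minkowski product of the convex bodies \eqref{convexbody}.

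Next, I linearize the pointwise supremum in $\delta$ on the left-hand side of Theorem \ref{theoremW} by a standard measurable selection $\delta_*$, and dualize the scalar $L^2$-norm by testing against a non-negative $h$ with $\|h\|_{L^2}=1$ together with a unit $\R^n$-valued field $\phi$, writing
\begin{equation*}
\Big\|\sup_{\delta>0}\big|W^{\frac12}T_{\Omega,\delta}(V^{\frac12}f)\big|_{\R^n}\Big\|_{L^2(\R^d)}=\l T^{(n)}_{\Omega,\delta_*(\cdot)}(V^{\frac12}f),\,G\r, \qquad G:=hW^{\frac12}\phi.
\end{equation*}
Applying the sparse bound above with $F_1=V^{1/2}f$, $F_2=G$ reduces the task to estimating the matrix sparse form
\begin{equation*}
\mathcal{S}_\eps := \sum_{Q\in\mathcal{S}}|Q|\cdot[\l V^{\frac12}f\r_{1+\eps,Q}]\cdot[\l hW^{\frac12}\phi\r_{1+\eps,Q}].
\end{equation*}

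The heart of the argument is then a two-weight estimate for $\mathcal{S}_\eps$. Using the John-ellipsoid approximation \eqref{jell} and testing in arbitrary directions $\xi$, I reduce to the directional scalar weights $\l W(\cdot)\xi,\xi\r$ and $\l V(\cdot)\xi,\xi\r$, which by definition belong to the scalar $A_\infty$ class with constants bounded by $[W]_{A_\infty}$ and $[V]_{A_\infty}$. A scalar reverse H\"older inequality applied to those directional weights permits the passage from $1+\eps$-averages to $1$-averages at the cost of an absolute multiplicative constant, provided that
\[
\eps\leq c\,\max\{[W]_{A_\infty},[V]_{A_\infty}\}^{-1}.
\]
Combining this with the matrix convex-body H\"older inequality in the spirit of \cite{NPTV}, which estimates the product $[\l V^{1/2}f\r_{1,Q}]\cdot[\l hW^{1/2}\phi\r_{1,Q}]$ by $\sqrt{[W,V]_{A_2}}$ times the scalar averages $\l|f|_{\R^n}\r_{1,Q}\,\l h\r_{1,Q}$, and with the Hyt\"onen--P\'erez two-weight $A_\infty$-$A_\infty$ sparse summation principle applied to the resulting scalar sparse form, yields
\[
\mathcal{S}_\eps\lesssim \sqrt{[W,V]_{A_2}\,[W]_{A_\infty}[V]_{A_\infty}}\,\|f\|_{L^2(\R^d;\R^n)}.
\]
Finally, the choice $\eps\sim \max\{[W]_{A_\infty},[V]_{A_\infty}\}^{-1}$ converts the factor $C/\eps$ coming from the sparse bound into a multiplicative $\max\{[W]_{A_\infty},[V]_{A_\infty}\}$, producing exactly the bound claimed in Theorem \ref{theoremW}.

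The main obstacle is the precise execution of the reduction from matrix convex-body averages to scalar averages: in particular, the formulation of a matrix reverse H\"older inequality for the bodies $[\l V^{1/2}f\r_{1+\eps,Q}]$ with sharp dependence on $[V]_{A_\infty}$, and the careful accounting of the John-ellipsoid distortion \eqref{jell}, whose dimensional factors must not compound through the sparse collection. Both are absorbed into the dimensional implicit constant of the final estimate.
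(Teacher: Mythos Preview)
Your overall architecture---linearize the supremum, apply Theorem \ref{theoremABSvv} with the localized estimate \eqref{finalest}, and then bound the resulting convex-body sparse form---matches the paper's. The gap is in your treatment of the sparse form $\mathcal S_\eps$, where the order of operations and the attribution of the $A_\infty$ factors are both off.

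First, the step ``reverse H\"older on the directional weights $\l W\xi,\xi\r$ reduces the $(1+\eps)$-convex bodies to $1$-convex bodies'' does not work as stated. The convex body $\l V^{1/2}f\r_{1+\eps,Q}$ is built from averages of the \emph{product} $V^{1/2}f$; in any fixed direction $\xi$ the support functional is $\l |\l f, V^{1/2}\xi\r|\r_{1+\eps,Q}$, which is not of the form (weight)$\times$(function) amenable to reverse H\"older, since both $f$ and the direction $V^{1/2}(x)\xi$ vary with $x$. There is no uniform inclusion $\l V^{1/2}f\r_{1+\eps,Q}\subset C\l V^{1/2}f\r_{1,Q}$.

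Second, the claimed ``matrix convex-body H\"older inequality'' bounding $\l V^{1/2}f\r_{1,Q}\cdot\l W^{1/2}g\r_{1,Q}$ by $\sqrt{[W,V]_{A_2}}\,\l|f|\r_{1,Q}\l h\r_{1,Q}$ is false already in the scalar case: take $f=h\equiv 1$ to get $\l v^{1/2}\r_Q\l w^{1/2}\r_Q$, which is not controlled by $\sqrt{w_Qv_Q}$. What the NPTV-type H\"older step actually gives is
\[
\l V^{\frac12}f\r_{1+\eps,Q}\cdot\l W^{\frac12}g\r_{1+\eps,Q}\lesssim \sqrt{[W,V]_{A_2}}\,\big\l |(V_Q)^{-\frac12}V^{\frac12}|\,|f|\big\r_{1+\eps,Q}\big\l |(W_Q)^{-\frac12}W^{\frac12}|\,|g|\big\r_{1+\eps,Q},
\]
i.e.\ you factor out the reducing operators \emph{before} any reverse H\"older. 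Only now do the directional $A_\infty$ weights enter: $|W^{1/2}(W_Q)^{-1/2}e|^2=\l W(\cdot)(W_Q)^{-1/2}e,(W_Q)^{-1/2}e\r$ has scalar $A_\infty$ constant $\leq [W]_{A_\infty}$ and average $\leq 1$ on $Q$, so sharp reverse H\"older (for $\eps\lesssim [W]_{A_\infty}^{-1}$) yields $\l |(W_Q)^{-1/2}W^{1/2}|\,|g|\r_{1+\eps,Q}\lesssim \l |g|\r_{2/p,Q}$ with $2/p$ just below $2$. The resulting near-critical Carleson embedding is what produces the factor $[W]_{A_\infty}^{1/2}$ (and likewise $[V]_{A_\infty}^{1/2}$); once you are at unweighted $\l|f|\r_{1,Q}\l h\r_{1,Q}$ there is nothing left that could generate $A_\infty$ constants, so the ``Hyt\"onen--P\'erez scalar sparse summation'' you invoke at the end would simply return $\|f\|_2\|h\|_2$.

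In short: keep your framework, but replace the two-step ``reverse H\"older then matrix H\"older to $L^1$-averages'' by the paper's ordering---matrix H\"older with reducing operators first, then reverse H\"older on the normalized matrix weights, then Carleson embedding at the near-critical exponent---which is where the $\sqrt{[W]_{A_\infty}[V]_{A_\infty}}$ factors genuinely arise. The remaining $\max\{[W]_{A_\infty},[V]_{A_\infty}\}$ then comes, as you say, from the choice of $\eps$.
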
 
We now explain how an application of Theorem \ref{theoremABSvv} reduces Theorem \ref{theoremW} to a weighted square function-type estimate for  convex-body valued sparse operators. First of all, fix $f$ of unit norm in $L^2(\R^d; \R^n)$. We may then find $g$ of unit norm  in $L^2(\R^d; \R^n)$ and bounded measurable functions $t_1,t_2$ such that the left hand side of \eqref{theoremWest} is bounded by twice the sum of
\begin{equation}\label{eq:matrixMainTerm}
 \left|\left\langle T[K]_{t_1}^{t_2} \otimes \mathrm{Id}_{\R^n} (V^{\frac12}f), W^{\frac12} g \right\rangle\right|
\end{equation}
and \[
  \left\|x\mapsto\sup_{Q\owns x}\left\langle \left| W^{\frac12}(x)V^{\frac12}f\right|_{\R^n} \right\rangle_Q\right\|_{L^2(\R^d)}
  \leq \left\|x\mapsto\sup_{Q\owns x} \left| W^{\frac12}(x)\l V\r_Q^{\frac12}\right|_{\mathcal L(\R^n)} \left\langle \left|
      \l V\r_Q^{-\frac12}V^{\frac12}f\right|_{\R^n} \right\rangle_Q\right\|_{L^2(\R^d)},
\]
where $[K]$ is a suitable decomposition satisfying \eqref{assker}. {The latter expression is (the norm of) a two-weight version of the matrix weighted maximal function of Christ and Goldberg \cite{CG}. In the one weight case when $V=W^{-1}\in A_2$, its boundedness has been proved in \cite{CG} and quantified in \cite{IKP15}, which contains the explicit bound \[c_{d,n}[W]_{A_2}\|f\|_{L^2(\R^d;\R^n)}\] and the implicit improvement \[c_{d,n}[W]_{A_2}^{1/2}[W^{-1}]_{A_\infty}^{1/2}\|f\|_{L^2(\R^d;\R^n)},\] where $c_{d,n}$ is a dimensional constant. A straightforward modification of the same argument, using the splitting on the right of the previous display, gives the bound \[[W,V]_{A_2}^{1/2}[V]_{A_\infty}^{1/2}\|f\|_{L^2(\R^d;\R^n)}\] in the two weight case. Roughly speaking, the first factor is controlled by the two weight $A_2$ condition and the second one by the $A_\infty$ property of $V$.}

 By virtue of the localized estimate \eqref{finalest} for $[K]$, an application of   Theorem \ref{theoremABSvv} tells us that  \eqref{eq:matrixMainTerm} is bounded by $C/\eps$ times  a sparse sublinear form as in  \eqref{sparsevv} with $p_1=p_2=1+\eps$, $f_1=V^{\frac12}f$ and $f_2=W^{\frac12} g$ for all $\eps>0$. Finally, we gather that
 \begin{equation}
\label{weightedMAIN}\begin{split} &\quad 
\left\| \sup_{\delta>0} \left| W^{\frac12} T_{\Omega,\delta} (V^{\frac12}f ) \right|_{\R^n} \right\|_{2}\\
 & \lesssim  \sqrt{[V,W]_{A_2}\max\{[W]_{A_\infty}, [V]_{A_\infty}\}} + \inf_{\eps>0} \sup_{\mathcal S}   \frac{1}{\eps}   \sum_{Q\in  \mathcal{S}} |Q| \l V^{\frac12}f\r_{1+\eps,Q}  \l W^{\frac12} g \r_{1+\eps,Q}
  \end{split}
\end{equation}
where the supremum is being taken over $\frac12$-sparse collections $\mathcal S$,
and the proof of Theorem \ref{theoremW} is completed by the following proposition.

\begin{proposition} \label{matrixform} The estimate 
\[
\inf_{\eps>0}    \frac{1}{\eps}   \sum_{Q\in  \mathcal{S}} |Q| \l V^{\frac12}f\r_{1+\eps,Q}  \l W^{\frac12} g \r_{1+\eps,Q} \lesssim \max\{[W]_{A_\infty}, [V]_{A_\infty} \} \sqrt{[W,V]_{A_2} [W]_{A_\infty} [V]_{A_\infty} }\]
  holds uniformly over all $f,g$ of unit norm in ${L^2(\R^d;\R^n)}$  and all $\frac12$-sparse collections $\mathcal S $.
\end{proposition}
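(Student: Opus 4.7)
The plan is to prove this proposition by a matrix-weighted adaptation of the reducing-operator technique of \cite{NPTV,HPV,CuDPOu2}, modified to accommodate convex-body $L^{1+\eps}$-averages rather than the $L^1$-averages appearing in those references. The parameter $\eps$ will be tuned to $\eps \simeq 1/\max\{[V]_{A_\infty},[W]_{A_\infty}\}$ to match the outer factor in the claimed bound.

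\emph{Reducing-operator factorization.} For each cube $Q\in\mathcal S$, I would apply John's ellipsoid theorem to the Minkowski norm $\xi\mapsto\l|V^{1/2}(\cdot)\xi|_{\R^n}\r_{1+\eps,Q}$ on $\R^n$ to produce a positive self-adjoint matrix $\mathcal V_Q \in \mathcal L(\R^n)$ satisfying $|\mathcal V_Q\xi|\leq \l|V^{1/2}\xi|\r_{1+\eps,Q}\leq \sqrt n\,|\mathcal V_Q\xi|$, and symmetrically produce $\mathcal W_Q$ from $W$. Since $\mathcal V_Q$ is constant on $Q$, linearity of the convex body average gives the factorization $\l V^{1/2}f\r_{1+\eps,Q}=\mathcal V_Q\l u_Q\r_{1+\eps,Q}$ with $u_Q:=\mathcal V_Q^{-1}V^{1/2}f$, and analogously $\l W^{1/2}g\r_{1+\eps,Q}=\mathcal W_Q\l v_Q\r_{1+\eps,Q}$ with $v_Q:=\mathcal W_Q^{-1}W^{1/2}g$. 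A Cauchy--Schwarz-type inequality for the Minkowski product then yields
\[
\l V^{1/2}f\r_{1+\eps,Q}\l W^{1/2}g\r_{1+\eps,Q}\leq \|\mathcal V_Q\mathcal W_Q\|\,\l|u_Q|\r_{1+\eps,Q}\l|v_Q|\r_{1+\eps,Q}.
\]
Jensen's inequality at $1+\eps\leq 2$ gives $\mathcal V_Q^2\leq V_Q$ and $\mathcal W_Q^2\leq W_Q$ as positive operators, hence $\|\mathcal V_Q\mathcal W_Q\|^2\leq \|V_Q^{1/2}W_Q^{1/2}\|^2 \leq [V,W]_{A_2}$.

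\emph{Scalar bilinear estimate.} Applied direction-by-direction to the scalar weights $x\mapsto \l V(x)\xi,\xi\r$ (which lie in scalar $A_\infty$ uniformly in $\xi$ with constant $\leq [V]_{A_\infty}$), the Fujii--Wilson characterization yields the reverse operator bound $\mathcal V_Q^2 \geq c_n[V]_{A_\infty}^{-1}V_Q$ and thus the pointwise control $|u_Q(x)|\leq C_n[V]_{A_\infty}^{1/2}|V_Q^{-1/2}V^{1/2}(x)f(x)|$. For the choice $\eps = c_n/\max\{[V]_{A_\infty},[W]_{A_\infty}\}$, combining H\"older's inequality with direction-wise scalar reverse H\"older at an exponent $1+\delta$ with $\delta\lesssim 1/[V]_{A_\infty}$, and using the trace identity $\l\mathrm{tr}(V_Q^{-1}V(\cdot))\r_Q=n$ to control the $L^{2(1+\delta)}(Q)$-mean of $\|V_Q^{-1/2}V^{1/2}(\cdot)\|$ by a dimensional constant, a sparse bilinear Carleson/Cauchy--Schwarz argument will produce
\[
\sum_Q|Q|\l|u_Q|\r_{1+\eps,Q}\l|v_Q|\r_{1+\eps,Q}\leq C_n[V]_{A_\infty}^{1/2}[W]_{A_\infty}^{1/2}\|f\|_2\|g\|_2.
\]
Combined with the $[V,W]_{A_2}^{1/2}$ factor from the factorization step and multiplied by $1/\eps\simeq \max\{[V]_{A_\infty},[W]_{A_\infty}\}/c_n$, this yields the desired bound.

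The hard part will be the scalar bilinear estimate: it requires a delicate balancing of the H\"older splitting against the reverse H\"older exponent, so that $\|V_Q^{-1/2}V^{1/2}(\cdot)\|$ is absorbed by its $L^{2(1+\delta)}(Q)$-average (a dimensional constant, via the trace identity) while the remaining $L^{p_2}_Q$-average of $|f|$ has $p_2$ slightly below $2$ and the Hardy--Littlewood maximal theorem can be invoked (with a constant depending on $\eps$ that is absorbed through the $1/\eps$-factor in the statement). This is the natural analogue, at exponent $1+\eps$, of the quantitative Christ--Goldberg two-weight matrix maximal estimate referenced in the discussion immediately preceding the statement of this proposition.
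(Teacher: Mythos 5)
Your roadmap matches the paper's in broad strokes — a de-weighting factorization of the convex-body averages, Cauchy--Schwarz for the Minkowski product, the trace identity, directional scalar reverse H\"older, Carleson embedding, and the tuning $\eps\simeq 1/\max\{[V]_{A_\infty},[W]_{A_\infty}\}$ — but the specific de-weighting matrices you introduce create a genuine gap. You take $\mathcal V_Q$ to be the John reducing operator of the $L^{1+\eps}$-averaged norm $\xi\mapsto\l|V^{1/2}\xi|\r_{1+\eps,Q}$ and assert the reverse operator bound $\mathcal V_Q^2\gtrsim[V]_{A_\infty}^{-1}V_Q$ ``by the Fujii--Wilson characterization.'' For each direction $\xi$, with $w=\l V\xi,\xi\r$, this amounts to $\l w^{(1+\eps)/2}\r_Q^{2/(1+\eps)}\gtrsim[w]_{A_\infty}^{-1}\l w\r_Q$, a reverse Jensen inequality from a sub-$L^1$ average to the $L^1$ average. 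No such estimate with \emph{linear} dependence on the Fujii--Wilson constant exists: the gap between $\l w^s\r_Q^{1/s}$ ($s<1$) and $\l w\r_Q$ is governed by the Hruschev/exponential $A_\infty$ constant, and running the sharp reverse H\"older/H\"older bootstrap you gesture at produces a constant of size $2^{c[w]_{A_\infty}}$, not $[w]_{A_\infty}$. Moreover, even if the linear bound were available, plugging $|u_Q|\lesssim[V]_{A_\infty}^{1/2}\,\|V_Q^{-1/2}V^{1/2}\||f|$ into your H\"older + trace + Carleson scheme gives $\bigl(\sum_Q|Q|\l|u_Q|\r_{1+\eps,Q}^2\bigr)^{1/2}\lesssim[V]_{A_\infty}\|f\|_2$ — the $[V]_{A_\infty}^{1/2}$ from the conversion multiplies the $[V]_{A_\infty}^{1/2}$ coming from the Carleson constant $(p')^p$ — so your scalar bilinear sum is actually $\lesssim[V]_{A_\infty}[W]_{A_\infty}\|f\|_2\|g\|_2$, an extra $\sqrt{[V]_{A_\infty}[W]_{A_\infty}}$ over what you claim.

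The paper sidesteps both issues by de-weighting directly with the $L^2$-average matrices $V_Q^{1/2},W_Q^{1/2}$ (not $L^{1+\eps}$-reducing operators), i.e.\ writing $V^{1/2}=V_Q^{1/2}(V_Q^{-1/2}V^{1/2})$ inside the convex-body average. The factorization then yields $\|V_Q^{1/2}W_Q^{1/2}\|\le[W,V]_{A_2}^{1/2}$ by the very definition of the joint $A_2$ constant (no Jensen chain needed), and the trace identity $\l\mathrm{tr}(V_Q^{-1}V)\r_Q=n$ together with the directional reverse H\"older controls $\l\|V_Q^{-1/2}V^{1/2}\|^q\r_Q^{1/q}\lesssim_n 1$ for $q$ up to $2(1+c/[V]_{A_\infty})$, with no additional $A_\infty$ loss. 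Your own Jensen step $\mathcal V_Q^2\le V_Q$ shows the reducing operators add nothing over $V_Q^{1/2}$ for the $A_2$ extraction; passing through them is a detour whose return trip is either impossible (exponentially expensive) or, at best, overcounts the $A_\infty$ constants.
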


\begin{proof}
There is no loss in generality with assuming that the sparse collection $\mathcal S$ is a subset of a standard dyadic grid in $\R^d$, and we do so. Fix $\eps>0$.   By standard reductions, we have that
\begin{equation}
\label{S8eq1} \sup_{\|f_j\|_{L^2(\R^d;\R^n)}=1}
\sum_{Q\in  \mathcal{S}} |Q| \l V^{\frac12}f_1\r_{1+\eps,Q}  \l W^{\frac12} f_2 \r_{1+\eps,Q} \lesssim \sqrt{[W,V]_{A_2}}  \sup_{\|f_j\|_{L^2(\R^d;\R^n)}=1} \|S_{V,\eps} f_1\|_2 \|S_{W,\eps} f_2\|_2  \end{equation}
having defined the square function
\[
S_{W,\eps} f^2 =  \sum_{Q\in \mathcal S} \left\l \left|(W_{Q})^{-\frac12} W^{\frac12}\right|\left|f\right|_{\R^n} \right\r_{1+\eps,Q}^2\cic{1}_Q.
\]
Now, if \[ \eps<2^{-10}t_W, \qquad 
t_{W}:=(2^{d+N+1}1+[W]_{A_\infty})^{-1}, \qquad  p:=\frac{1+2t}{(1+\eps) (1+t)}\in (1,2) 
\]as a result of the sharp reverse H\"older inequality and of the Carleson embedding theorem there holds  \[
\|S_{W,\eps} f\|_2^2 \lesssim  \sum_{Q \in S}|Q| \l \left|f\right|_{\R^n}  \r_{\frac2p, Q}^2 \lesssim (p')^p \|\left|f\right|_{\R^n}\|_{2}^2 \lesssim [W]_{A_\infty}\|\left|f\right|_{\R^n}\|_{2}^2\qquad 
\]
cf.\ \cite[Proof of Lemma 5.2]{NPTV}, and a similar argument applies to $S_{V,\eps}$. Therefore, for $\eps<2^{-10}\min\{t_W,t_V\}$, \eqref{S8eq1} turns into
\[ \sup_{\|f_j\|_{L^2(\R^d;\R^n)}=1}
\sum_{Q\in  \mathcal{S}} |Q| \l V^{\frac12}f_1\r_{1+\eps,Q}  \l W^{\frac12} f_2 \r_{1+\eps,Q} \lesssim \sqrt{[W,V]_{A_2} [W]_{A_\infty} [V]_{A_\infty} } \]   which in turn proves Proposition \ref{matrixform}.
\end{proof}

\begin{remark} We may derive a slightly stronger weighted estimate than \eqref{weightedMAIN} for the non-maximally truncated rough integrals $T_{\Omega,\delta}$, by applying Theorem \ref{theoremABSvv} in conjunction with the $(1,1+\eps)$ localized estimates proved in \cite{CoCuDPOu}. Namely, the estimate
\[
\left\|    W^{\frac12} T_{\Omega,\delta} (V^{\frac12}f )  \right\|_{L^2(\R^d;\R^n)}
  \lesssim   \sup_{\|g\|_{L^2(\R^d;\R^n)}=1} \inf_{\eps>0} \sup_{\mathcal S}   \frac{1}{\eps}  \sum_{Q\in  \mathcal{S}} |Q| \l V^{\frac12}f\r_{1,Q}  \l W^{\frac12} g \r_{1+\eps,Q}. 
\]
holds uniformly in $\delta>0$ for all $f$ of unit norm in $L^2(\R^d; \R^n)$. Repeating the  proof of Proposition \ref{matrixform} then yields the slightly improved weighted estimate
\[\sup_{\delta>0}\left\|   W^{\frac12} T_{\Omega,\delta} (V^{\frac12}f ) \right\|_{{L^2(\R^d;\R^n)}} \lesssim \min\{[W]_{A_\infty}, [V]_{A_\infty} \} \sqrt{[W,V]_{A_2} [W]_{A_\infty} [V]_{A_\infty} }\left\|f \right\|_{L^2(\R^d; \R^n)}.
\]
\end{remark}

\subsection{Proof of Theorem \ref{theoremABSvv}} The proof of Theorem \ref{theoremABSvv} is formally identical to the argument for the scalar valued case, provided that    estimate \eqref{stopiter1tilde} and the definition of $E_Q$ given in \eqref{EQ} are replaced by suitable vector valued versions. We begin with the second tool. The proof, which is a minor variation on  \cite[Lemma 3.3]{CuDPOu2}, is given below
\begin{lemma} \label{vvmaxth}
Let $0<\eta\leq 1$,  $Q$ be a dyadic cube and $f_j\in L^{p_j}(\R^d; \R^n)$, $j=1,2$.  Then the set 
 \begin{equation}
\label{EQ2}
E_Q:=\bigcup_{j=1}^2\left\{ x \in 3Q: \eta \l f_j\cic{1}_{3Q}\r_{p_j, L} \not\subset     \l f_j \r_{p_j,3Q} \textrm{ \emph{for some cube} } L\subset \R^d \textrm{ with } x \in L \right\}
\end{equation}
satisfies $|E_Q|\leq C \eta^{\min\{p_1,p_2\}}   |Q|$ for some absolute dimensional constant $C$.
\end{lemma}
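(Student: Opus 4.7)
The plan is to normalize the convex body $\l f_j\r_{p_j,3Q}$ by a linear change of coordinates so that its John ellipsoid becomes the Euclidean unit ball, and then reduce the convex-body containment condition in \eqref{EQ2} to a scalar weak-type $(p_j,p_j)$ estimate for $\mathrm M_{p_j}$. First, I would decompose $E_Q$ into its two pieces according to $j\in\{1,2\}$ and bound each separately. Fixing such a $j$, abbreviating $f=f_j$, $p=p_j$, and writing $K:=\l f\r_{p,3Q}$, the case $K=\{0\}$ is trivial, so I may assume that $K$ has positive-dimensional linear span. Let $T$ be the invertible linear map on $\mathrm{span}(K)$ sending the John ellipsoid $\mathcal E_K$ to the unit ball $B_1$; the degenerate case where $K$ does not span $\R^n$ is handled as in the discussion following \eqref{jell}. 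Since the convex bodies in \eqref{convexbody} transform equivariantly under linear maps, and John ellipsoids commute with linear maps, after replacing $f$ by $Tf$ I may assume $\mathcal E_K=B_1$, so that
\[
B_1\subseteq K\subseteq B_{\sqrt n}.
\]

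With this normalization, the key pointwise reduction is as follows. If $x\in E_Q^{(j)}$, pick a cube $L\ni x$ and a vector $a\in\eta\l f\cic 1_{3Q}\r_{p,L}$ witnessing the failure of containment, so that $a\notin K$. Since $B_1\subseteq K$, contraposition forces $a\notin B_1$, i.e.\ $|a|_{\R^n}>1$, whence at least one component of $a$ exceeds $1/\sqrt n$ in absolute value. Expanding $a=\eta\frac{1}{|L|}\int_L f\cic 1_{3Q}\,\varphi$ for some $\varphi\in\Phi_{p'}(L)$ and passing to the supremum over $L\ni x$, this gives
\[
\mathrm M_p\bigl((f)_k\cic 1_{3Q}\bigr)(x)\;>\;\frac{1}{\eta\sqrt n}
\]
for some $k\in\{1,\dots,n\}$. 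Lemma \ref{simplelemma} combined with the normalization $\mathcal E_K=B_1$ supplies $\l (f)_k\r_{p,3Q}\leq\sqrt n$, and the weak-type $(p,p)$ bound for $\mathrm M_p$ then produces
\[
\Bigl|\Bigl\{x\in 3Q:\mathrm M_p((f)_k\cic 1_{3Q})(x)>\tfrac{1}{\eta\sqrt n}\Bigr\}\Bigr|\;\lesssim\;(\eta\sqrt n)^p\,\|(f)_k\cic 1_{3Q}\|_p^p\;\lesssim\;\eta^p\,n^p\,|Q|.
\]
Summing over $k=1,\dots,n$ and over $j=1,2$, and using $\eta\leq 1$ so that $\eta^{p_j}\leq\eta^{\min\{p_1,p_2\}}$, yields the stated bound, with an implicit constant depending only on $d$ and $n$.

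The main subtlety in carrying out this plan is securing the correct direction of implication in the reduction: it is the \emph{inner} inclusion $B_1\subseteq K$ (and not the outer $K\subseteq B_{\sqrt n}$) that permits one to convert failure of containment in $K$ into a Euclidean lower bound on some witness vector, and thence into a scalar maximal function control. Once this passage is in place, handling the degenerate case of $K$ not spanning $\R^n$ essentially reduces to applying the same argument inside $\mathrm{span}(K)$, and the remainder is a routine application of the weak-type bound for $\mathrm M_p$ and of Lemma \ref{simplelemma}.
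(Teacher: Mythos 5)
Your proof is correct, and its opening moves --- normalizing by the John ellipsoid of $K=\l f\r_{p,3Q}$ so that $B_1\subset K\subset B_{\sqrt n}$, then using the inner inclusion $B_1\subset K$ to convert a failure of containment into a coordinatewise Euclidean lower bound --- coincide with the paper's. Where you diverge is the finishing argument. The paper proceeds self-containedly: via a Vitali-type reduction it selects a disjoint subfamily $L_1,\ldots,L_m\in\mathcal L_f$, partitions the indices by the witness coordinate $\ell_\mu$, glues the test functions $\varphi_\mu$ (disjointly supported since the $L_\mu$ are disjoint) into a single $\varphi\in\Phi_{p'}(3Q)$, and then uses the outer inclusion $K\subset B_{\sqrt n}$ to read off $\sqrt n\geq \delta^{-1/p'}\sum_{\mu\in M_\ell}\tfrac{|L_\mu|}{|3Q|}(F_\mu)_\ell>\tfrac{1}{\eta\sqrt n}\delta^{1/p}$, hence $\delta<n^p\eta^p$. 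You instead extract, from the same witness vector, the scalar inequality $\mathrm M_p((f)_k\cic 1_{3Q})(x)>\tfrac{1}{\eta\sqrt n}$ for some $k$, then invoke the weak-type $(p,p)$ bound for $\mathrm M_p$ together with Lemma \ref{simplelemma} (itself a consequence of $K\subset B_{\sqrt n}$) to bound $\|(f)_k\cic 1_{3Q}\|_p$. In effect the paper's selection of disjoint $L_\mu$ and assembly of $\varphi_\mu$ into $\varphi\in\Phi_{p'}(3Q)$ is precisely a bare-hands re-derivation of the weak-type $(p,p)$ estimate for $\mathrm M_p$ specialized to this situation; you have recognized that structure and modularized it by citing the maximal function estimate as a black box. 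Both are correct and quantitatively comparable; your version is a bit cleaner and exposes the underlying mechanism more plainly, while the paper's is self-contained and keeps the coordinate-by-coordinate bookkeeping explicit. Your treatment of the degenerate cases ($K=\{0\}$, or $K$ spanning a proper subspace) is also sound, since $\supp f\subset\operatorname{span}(K)$ a.e.\ on $3Q$ in those cases and the whole argument localizes to that subspace.
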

\begin{proof} We may assume that $\supp f_j\subset 3Q$. It is certainly enough to estimate the measure of each $j\in \{1,2\}$ component of $E_Q$ by $C\eta^{p_j}|Q|$, and we do so: we fix $j$ and  are thus free to write $f_j=f,p_j=p$. Let $\mathcal L_{f}=\{ L\subset \R^d: \eta \l f\r_{p, L} \not\subset     \l f \r_{p,3Q} \}$.  By usual covering arguments it suffices to show that if $L_1,\ldots L_m\in \mathcal L_{f}$ are disjoint then
\[
\sum_{\mu=1}^m |L_\mu| \leq C \eta^p|Q|.
\] Fix such a disjoint collection $L_1,\ldots L_m$.
Notice that if $A\in \mathrm{GL}(\R^n)$ then $\mathcal L_{Af}=\mathcal L_{f}$.  By action of   $\mathrm{GL}(\R^n)$ we may thus reduce to the case where $\mathcal{E}_{\l f \r_{p,3Q}}=B_1$, and in particular
\begin{equation}
\label{jonell}
B_1 \subset  \l f \r_{p,3Q} \subset B_{ \sqrt{n} }.
\end{equation}
 By membership of each $L_\mu\in \mathcal L_{f}$, we know that $\eta \l f \r_{p,L_\mu}\not\subset B_1 $. A fortiori, there exists $\varphi_\mu\in \Phi_{p'}(L_\mu)$ and a coordinate index $\ell_\mu\in \{1,\ldots,n\}$ such that
\begin{equation}
\label{etadef}
 \eta(F_\mu)_\ell>\frac{1}{\sqrt{n}},\qquad  F_\mu:=\int_{L_\mu} f \varphi_\mu \, \frac{\d x}{|L_\mu|}.
\end{equation}
Let $M_\ell=\{\mu\in \{1,\ldots,m\}: \ell_\mu=\ell \}$. As $\{1,\ldots,m\}=\cup\{M_\ell: \ell=1,\ldots, n\}$ it suffices to show that
\begin{equation}
\label{etafin}
\frac{1}{|3Q|}\sum_{\mu \in M_\ell} |L_\mu|  =:\delta< C\eta^p. 
\end{equation}
Using the membership $\varphi_\mu\in \Phi_{p'}(L_\mu)$ for the first inequality and   the disjointness of the supports for the second equality 
\[
1 \geq  \frac{1}{\delta}\sum_{\mu \in M_\ell } \int_{L_\mu\cap 3Q} |\varphi_{\mu}|^{p'}  \, \frac{\d x}{|3Q|} = \int_{3Q} |\varphi|^{p'}  \, \frac{\d x}{|3Q|}, \qquad \varphi:=\delta^{-\frac{1}{p'}} \sum_{\mu \in M_\ell }   \varphi_{\mu}\cic{1}_{3Q}
\] 
so that $\varphi \in \Phi_{p'}(3Q)$. In particular,  beginning with the right inclusion in \eqref{jonell} and using \eqref{etadef} in the last inequality
\[
\sqrt{n} \geq  \int_{3Q} (f \varphi)_\ell \, \frac{\d x}{|3Q|}= \delta^{-\frac{1}{p'}} \sum_{\mu \in M_\ell} \frac{|L_\mu|}{|3Q|} (F_\mu)_\ell > \frac{1}{\eta\sqrt{n}} \delta^{\frac1p}
\]
which rearranging yields \eqref{etafin} with $C=n^p$, thus completing the proof.
\end{proof}
At this point, let  $ \mathcal S_k$ be a collection of pairwise disjoint cubes as in Step 3 of the proof of Theorem \ref{theoremABS}. The elements of the collection $\mathcal S_{k+1}$ are defined to  be the   maximal dyadic cubes $L$ such that the same condition as in \eqref{EQ} holds, provided the definition of $E_Q$ therein is replaced with the one in \eqref{EQ2}. By virtue of Lemma \ref{vvmaxth}, \eqref{stopiter5} still holds  provided $\eta$ is chosen small enough. And,  we still obtain that $\mathcal S_{k+1}(Q)=\{L \in \mathcal S_{k+1}: L\subset 3Q \}$ is a stopping collection. By definition of $\mathcal S_{k+1}$, it must be that 
\begin{equation}
\label{vvpfeq1}
  \l f_j\cic{1}_{3Q}\r_{p_j, K} \subset  C    \l f_j \r_{p_j,3Q}  
\end{equation} 
whenever the (not necessarily dyadic) cube $K$ is such that a moderate dilate $CK$ of $K$ contains $2^5 L$ for some $  L \in \mathcal S_{k+1}(Q).$
Fix $Q$ for a moment and let $A_j=(A_j^{m\mu}:1\leq m,\mu\leq n)\in \mathrm{GL}(\R^n),j=1,2$ be  chosen such that the John ellipsoid of   $\l \widetilde{f_j}\r_{3Q,p_j}$ is $B_1${, or its intersection with a lower dimensional subspace in a degenerate case,}  and $A_{j} \widetilde{f_j}:= f_j$. It follows from \eqref{vvpfeq1} that if $  2^5L\subset CK $ then $\l \widetilde {f_j}\cic{1}_{3Q}\r_{p_j, K} \subset  B_{C}$. This fact, together with Lemma \ref{simplelemma} readily yields the estimates  \begin{equation}
\label{vvpf2}
\|\widetilde{f_j}\|_{\mathcal Y_{p_j}(\mathcal Q)} \lesssim 1, \qquad j=1,2. 
\end{equation}
We are ready to obtain a substitute for \eqref{stopiter1tilde}.
In fact
\[\left|\left\l
T[K]^{t_2\wedge s_Q}_{t_1}\otimes \mathrm{Id}_{\R^n}(f_1\cic{1}_{Q}),f_2\right\r\right| \leq |Q|\left|\sum_{m=1}^n\mathcal Q[K]^{t_2 }_{t_1}  (f_{1m} ,f_{2m}) \right|  +\sum_{\substack{L \in  \mathcal S_{k+1}  (Q)\\ L \subset Q}}
\left|\left\l
T[K]^{t_2\wedge s_L}_{t_1}\otimes \mathrm{Id}_{\R^n}(f_1\cic{1}_{L}),f_2\right\r\right|
\]
and  by actions of $\mathrm{GL}(\R^n)$, see the proof of \cite[Lemma 3.4]{CuDPOu2}, \[\begin{split} &\quad 
\left|\sum_{m=1}^n\mathcal Q[K]^{t_2 }_{t_1}  (f_{1m} ,f_{2m})\right| = \left|\sum_{m,\mu_1,\mu_2=1}^nA_{1}^{m\mu_1} A_{2}^{m\mu_2}\mathcal Q[K]^{t_2 }_{t_1}  (\widetilde{f_{1\mu_1}} ,\widetilde{f_{2\mu_2}}) \right|
\\&\lesssim \l f_1\r_{p_1, 3Q}\l f_2\r_{p_2, 3Q} \sup_{\mu_1,\mu_2} |\mathcal Q[K]^{t_2 }_{t_1}  (\widetilde{f_{1\mu_1}} ,\widetilde{f_{2\mu_2}})| \lesssim  \l f_1\r_{p_1, 3Q}\l f_2\r_{p_2, 3Q} 
\end{split}
\]
where we also employed \eqref{stopiter0} coupled with \eqref{vvpf2} in the last line. Assembling together the last two displays yields the claimed vector-valued version of \eqref{stopiter1tilde}, and finishes the proof of Theorem \ref{theoremABSvv}.

\section{Proof of Theorem \ref{cor}}\label{sec:quantitative}
We begin with the proof of the first point. As a direct application of the main result of  \cite{Li2},
\[
|\langle {T} f, g\rangle|\le  c_{d,p} \varepsilon^{-1}  [v]_{A_r}^{\frac 1{1+\varepsilon}-\frac 1{p'}}([u]_{A_\infty}^{\frac 1p}+ [v]_{A_\infty}^{\frac 1{p'}})\|f\|_{L^p(w)}\|g\|_{L^{p'}(\sigma)},
\]
where
\[
r= \textstyle \left( \frac{(1+\varepsilon)'}{p}\right)' (\frac p{1+\varepsilon}-1)+1=p+ \frac{p(p-2)\varepsilon }{1-(p-1)\varepsilon},\qquad 
v=\sigma^{\frac{1+\varepsilon}{1+\varepsilon-p'}}=w^{1+\frac {\varepsilon p'}{p'-(1+\varepsilon)}}, \qquad u=w^{\frac {1+\varepsilon}{1+\varepsilon-p}}
\]
By definition,
\begin{align*}
[v]_{A_r}^{\frac 1{1+\varepsilon}-\frac 1{p'}}&= \sup_Q \Big(\frac 1{|Q|}\int_Q w^{1+\frac {\varepsilon p'}{p'-(1+\varepsilon)}}\Big)^{\frac 1{1+\varepsilon}-\frac 1{p'}} \Big(\frac 1{|Q|}\int_Q \sigma^{1+\frac{\varepsilon p}{p-(1+\varepsilon)}}  \Big)^{(r-1)(\frac 1{1+\varepsilon}-\frac 1{p'})}\\
&= \sup_Q \Big(\frac 1{|Q|}\int_Q w^{1+\frac {\varepsilon p'}{p'-(1+\varepsilon)}}\Big)^{\frac 1p \frac 1{1+ \frac {\varepsilon p'}{p'-(1+\varepsilon)}}} \Big(\frac 1{|Q|}\int_Q  \sigma^{1+\frac{\varepsilon p}{p-(1+\varepsilon)}}  \Big)^{\frac 1{p'}\cdot \frac{1}{1+\frac{\varepsilon p}{p-(1+\varepsilon)}}}.
\end{align*}
By the sharp reverse H\"older inequality   \cite{HPR}, taking $\varepsilon= \frac{1}{\tau_d\max\{p, p'\}\max\{[w]_{A_\infty},[\sigma]_{A_\infty}\}}$, we can conclude
\[
\|{T}\|_{L^p(w)}\le c_{d,p} [w]_{A_p}^{\frac 1p}([w]_{A_p}^{\frac 1{p'}}+[\sigma]_{A_\infty}^{\frac 1p}) \max\{[\sigma]_{A_\infty}, [w]_{A_\infty}\}\le c_{d,p} [w]_{A_p}^{2\max\{1,\frac 1{p-1}\}}.
\]

Next let us prove the Fefferman-Stein type inequality of the second point. Indeed,
let $A(t)=t^{pr/{\tilde r}}$ and $\bar B(t)=t^{\frac 12(\frac p{\tilde r}+1)}$, where $1<r<p$ and $\tilde r=\frac{pr-\frac{r-1}2}{pr-(r-1)}$. Then
\begin{align*}
\sup_Q \langle w^r \rangle_Q^{\frac 1{pr}}\|(M_{r}w)^{-\tilde r/p}\|_{B, Q}^{\frac1{\tilde r}}&\le \sup_Q\inf_{x\in Q} (M_{r}w)^{\frac 1p}\|(M_{r}w)^{-\tilde r/p}\|_{B, Q}^{\frac1 {\tilde r}}\le 1.
\end{align*} Let $v=M_{r} w$. Now we have,
\begin{align*}
\tilde r'\sum_{Q\in \mathcal S}|Q|\langle f\rangle_{\tilde r, Q}\langle gw^{\frac 1p} \rangle_{\tilde r, Q}&= \tilde r' \sum_{Q\in \mathcal S}\langle f^{\tilde r} v^{\frac {\tilde r}p} v^{-\frac {\tilde r}p}\rangle_Q^{\frac 1{\tilde r}}\langle gw^{\frac 1p} \rangle_{\tilde r, Q}|Q|\\
&\le \tilde r' \sum_{Q\in \mathcal S}\| f^{\tilde r} v^{\frac {\tilde r}p} \|_{\bar B, Q}^{\frac 1{\tilde r}} \|v^{-\frac {\tilde r}p}\|_{B, Q}^{\frac 1{\tilde r}} \|w^{\frac {\tilde r}p}\|_{A, Q}^{\frac 1{\tilde r}} \|g^{\tilde r}\|_{\bar A, Q}^{\frac 1{\tilde r}}|Q|\\
&\le 2 \tilde r' \sum_{Q\in \mathcal S}\| f^{\tilde r} v^{\frac {\tilde r}p} \|_{\bar B, Q}^{\frac 1{\tilde r}}\|g^{\tilde r}\|_{\bar A, Q}^{\frac 1{\tilde r}} |E_Q|\\
&\le 2\tilde r' \int M^{\mathcal D}_{\bar B}(f^{\tilde r} v^{\frac {\tilde r}p})^{\frac 1{\tilde r}} M^{\mathcal D}_{\bar A(t^{\tilde r})}(g) \\
&\le 2 \tilde r'\| M^{\mathcal D}_{\bar B}(f^{\tilde r} v^{\frac {\tilde r}p})^{\frac 1{\tilde r}}\|_{L^p} \| M^{\mathcal D}_{\bar A(t^{\tilde r})}(g)\|_{L^{p'}}\\
&\le c_d p^2 (p')^{\frac 1{p}} (r')^{1+\frac 1{p'}}\|f\|_{L^p(v)}\|g\|_{L^{p'}}.
\end{align*}
By sparse domination formula and duality,
\[
\|{T}(f)\|_{L^p(w)}\le  c_d p^2 (p')^{\frac 1{p}} (r')^{1+\frac 1{p'}}\|f\|_{L^p(M_r w)}.
\]
Notice that the $A_1$-$A_\infty$ estimate just follows from the sharp reverse H\"older inequality, so that we may restrict to $q>1$. The idea is still viewing $A_q$ condition as a bumped $A_p$ condition (see \cite{Li3}). Let $C(t)=t^{\frac p{r(q-1)}}$. We have
\begin{align*}
r' \sum_{Q\in \mathcal S}|Q|\langle f\rangle_{ r, Q}\langle g w \rangle_{  r, Q}\le r' \sum_{Q\in \mathcal S}|Q|\langle f^r w^{\frac rp}\rangle_{\bar C, Q}^{\frac 1r} \langle w^{-\frac rp} \rangle_{C, Q}^{\frac 1r}  \langle g^{rs} w \rangle_{Q}^{\frac 1{rs}}\langle w^{(r-\frac 1s)s'}\rangle_Q^{\frac 1{rs'}}.
\end{align*}
Take
\[
r=1+\frac{1}{8p(\frac pq)'\tau_d [w]_{A_\infty}}, \,\,s=1+\frac{1}{4(\frac pq)'p}.
\]
Then $rs<1+\frac 1{2p}<p'$, $r< 1+\frac 18(\frac pq-1)<\frac pq$ and $(r-\frac 1s)s'<1+ \frac 1{\tau_d[w]_{A_\infty}}$. Then applying the sparse domination, and the   sharp reverse H\"older inequality we obtain
\begin{align*}
\|{T}(f)\|_{L^p(w)}&\le \sup_{\|g\|_{L^{p'}(w)}=1}r' \sum_{Q\in \mathcal S}|Q|\langle f\rangle_{ r, Q}\langle g w \rangle_{  r, Q}\\
&\le \sup_{\|g\|_{L^{p'}(w)}=1} c_{d,p,q} [w]_{A_\infty} \sum_{Q\in \mathcal S}|Q|\langle f^r w^{\frac rp}\rangle_{\bar C, Q}^{\frac 1r}   \langle g  \rangle_{rs, Q}^w\langle w \rangle_Q  \langle w^{-\frac 1{q-1}} \rangle_Q^{\frac{q-1}p}\\
&\le \sup_{\|g\|_{L^{p'}(w)}=1}c_{d,p,q} [w]_{A_\infty} [w]_{A_q}^{\frac 1p}\sum_{Q\in \mathcal S}\langle f^r w^{\frac rp}\rangle_{\bar C, Q}^{\frac 1r}   \langle g  \rangle_{rs, Q}^w w(Q)^{\frac 1{p'}} |Q|^{\frac 1p}\\
&\le\sup_{\|g\|_{L^{p'}(w)}=1} c_{d,p,q} [w]_{A_\infty} [w]_{A_q}^{\frac 1p}\big(\sum_{Q\in \mathcal S}\langle f^r w^{\frac rp}\rangle_{\bar C, Q}^{\frac 1r}     |Q| \big)^{\frac 1p}\big(\sum_{Q\in \mathcal S}(\langle g  \rangle_{rs, Q}^w)^{p'} w(Q) \big)^{\frac 1{p'}}\\
&\le c_{d,p,q} [w]_{A_\infty}^{1+\frac 1{p'}} [w]_{A_q}^{\frac 1p} \|f\|_{L^p(w)},
\end{align*}where in the last step we have used the Carleson embedding theorem; we omit the routine details.

Finally, we prove the Coifman-Fefferman type inequality. Fix $\varepsilon>0$ and denote $\eta=1+\varepsilon$. Also let
\[r= 1+\frac{1}{8p\eta' \tau_d [w]_{A_\infty}},\,\, s=1+\frac 1{4\eta' p}.
\]Then again $rs<1+\frac 1{2p}<p'$, $r<\eta$ and $(r-\frac 1s)s'<1+ \frac 1{\tau_d[w]_{A_\infty}}$. Applying the sparse domination formula again, we obtain
\begin{align*}
\|{T}(f)\|_{L^p(w)}&\le \sup_{\|g\|_{L^{p'}(w)}=1}r' \sum_{Q\in \mathcal S}|Q|\langle f\rangle_{ r, Q}\langle g w \rangle_{  r, Q}\lesssim \sup_{\|g\|_{L^{p'}(w)}=1}  \eta' [w]_{A_\infty}\sum_{Q\in \mathcal S}  \langle f\rangle_{\eta, Q}\langle g\rangle_{rs, Q}^w w(Q)\\
&\lesssim \sup_{\|g\|_{L^{p'}(w)}=1}  \eta' [w]_{A_\infty}^2 \int_{\mathbb R^d}   M_\eta f M_{rs, w}^{\mathcal D} (g) w dx
\lesssim    \eta' [w]_{A_\infty}^2 \| M_\eta f\|_{L^p(w)}.
\end{align*}

\bibliography{DiPlinioHytonenLi2017}

\providecommand{\bysame}{\leavevmode\hbox to3em{\hrulefill}\thinspace}
\providecommand{\MR}{\relax\ifhmode\unskip\space\fi MR }
\providecommand{\MRhref}[2]{%
  \href{http://www.ams.org/mathscinet-getitem?mr=#1}{#2}
}
\providecommand{\href}[2]{#2}
\begin{thebibliography}{10}

\bibitem{BFP}
Fr\'ed\'eric Bernicot, Dorothee Frey, and Stefanie Petermichl, \emph{Sharp
  weighted norm estimates beyond {C}alder{\'o}n-{Z}ygmund theory}, Anal. PDE
  \textbf{9} (2016), no.~5, 1079--1113. \MR{3531367}

\bibitem{BCTW}
K.~{Bickel}, A.~{Culiuc}, S.~{Treil}, and B.~D. {Wick}, \emph{{Two weight
  estimates with matrix measures for well localized operators}}, preprint
  arXiv:1611.06667.

\bibitem{BPW}
Kelly Bickel, Stefanie Petermichl, and Brett~D. Wick, \emph{Bounds for the
  {H}ilbert transform with matrix {$A_2$} weights}, J. Funct. Anal.
  \textbf{270} (2016), no.~5, 1719--1743. \MR{3452715}

\bibitem{Ch88}
Michael Christ, \emph{Weak type {$(1,1)$} bounds for rough operators}, Ann. of
  Math. (2) \textbf{128} (1988), no.~1, 19--42. \MR{951506}

\bibitem{CG}
Michael Christ and Michael Goldberg, \emph{Vector {$A_2$} weights and a
  {H}ardy-{L}ittlewood maximal function}, Trans. Amer. Math. Soc. \textbf{353}
  (2001), no.~5, 1995--2002. \MR{1813604}

\bibitem{CRub}
Michael Christ and Jos{\'e}~Luis Rubio~de Francia, \emph{Weak type {$(1,1)$}
  bounds for rough operators. {II}}, Invent. Math. \textbf{93} (1988), no.~1,
  225--237. \MR{943929}

\bibitem{CoCuDPOu}
Jos{\'e}~M.\ Conde-Alonso, Amalia Culiuc, Francesco Di~Plinio, and Yumeng Ou,
  \emph{A sparse domination principle for rough singular integrals}, preprint
  arXiv:1612.09201, to appear in Analysis {\&} PDE.

\bibitem{CuDPOu}
Amalia Culiuc, Francesco Di~Plinio, and Yumeng Ou, \emph{Domination of
  multilinear singular integrals by positive sparse forms}, preprint
  arXiv:1603.05317.

\bibitem{CuDPOu2}
Amalia Culiuc, Francesco Di~Plinio, and Yumeng Ou, , \emph{Uniform sparse domination of singular integrals via dyadic
  shifts}, preprint arXiv:1610.01958, to appear in Math.\ Res.\ Lett.

\bibitem{CKL}
Amalia Culiuc, Robert Kesler, and Michael~T.\ Lacey, \emph{Sparse bounds for
  the discrete cubic {H}ilbert transform}, preprint arXiv:1612.08881.

\bibitem{FSZK}
Fernanda~Clara de~Fran{\c c}a~Silva and Pavel Zorin-Kranich, \emph{Sparse
  domination of sharp variational truncations}, preprint arxiv:1604.05506.

\bibitem{DPLer2013}
Francesco Di~Plinio and Andrei~K. Lerner, \emph{On weighted norm inequalities
  for the {C}arleson and {W}alsh-{C}arleson operator}, J. Lond. Math. Soc. (2)
  \textbf{90} (2014), no.~3, 654--674. \MR{3291794}

\bibitem{DR}
Javier Duoandikoetxea and Jos{\'e}~L. Rubio~de Francia, \emph{Maximal and
  singular integral operators via {F}ourier transform estimates}, Invent. Math.
  \textbf{84} (1986), no.~3, 541--561. \MR{837527}

\bibitem{HPV}
T.~{Hyt{\"o}nen}, S.~{Petermichl}, and A.~{Volberg}, \emph{{The sharp square
  function estimate with matrix weight}}, preprint arxiv:1702.04569.

\bibitem{HPR}
Tuomas Hyt{\"o}nen, Carlos P{\'e}rez, and Ezequiel Rela, \emph{Sharp reverse
  {H}\"older property for {$A_\infty$} weights on spaces of homogeneous type},
  J. Funct. Anal. \textbf{263} (2012), no.~12, 3883--3899. \MR{2990061}

\bibitem{HRT}
Tuomas~P. Hyt\"onen, Luz Roncal, and Olli Tapiola, \emph{Quantitative weighted
  estimates for rough homogeneous singular integrals}, Israel J. Math.
  \textbf{218} (2017), no.~1, 133--164. \MR{3625128}

\bibitem{IKP15}
J.~{Isralowitz}, H.-K. {Kwon}, and S.~{Pott}, \emph{{Matrix weighted norm
  inequalities for commutators and paraproducts with matrix symbols}}, ArXiv
  e-prints (2015).

\bibitem{LSMF}
Michael~T. {Lacey}, \emph{{Sparse Bounds for Spherical Maximal Functions}},
  preprint arXiv:1702.08594, to appear in J. d{'}Analyse Math.

\bibitem{Lac2015}
Michael~T. Lacey, \emph{An elementary proof of the {$A_2$} bound}, Israel J.
  Math. \textbf{217} (2017), no.~1, 181--195. \MR{3625108}

\bibitem{LMena}
Michael~T. Lacey and Dario Mena~Arias, \emph{The sparse {T}1 {Theorem}},
  Houston J.\ Math. \textbf{43} (2017), no.~1, 111--127.

\bibitem{LS}
Michael~T. Lacey and Scott Spencer, \emph{Sparse bounds for oscillatory and
  random singular integrals}, New York J. Math. \textbf{23} (2017), 119--131.
  \MR{3611077}

\bibitem{Ler2017}
A.~K. {Lerner}, \emph{{A weak type estimate for rough singular integrals}},
  preprint arXiv:1705.07397.

\bibitem{LOR}
A.~K. {Lerner}, S.~{Ombrosi}, and I.~P. {Rivera-R{\'{\i}}os}, \emph{{On
  pointwise and weighted estimates for commutators of Calder{\'o}n-{Z}ygmund
  operators}}, preprint arXiv:1604.01334.

\bibitem{Ler2015}
Andrei~K. Lerner, \emph{On pointwise estimates involving sparse operators}, New
  York J. Math. \textbf{22} (2016), 341--349. \MR{3484688}

\bibitem{Li3}
Kangwei Li, \emph{{Sharp weighted estimates involving one supremum}}, preprint
  arXiv:1705.08364.

\bibitem{Li1}
Kangwei Li, \emph{Sparse domination theorem for multilinear singular integral
  operators with {${L}^r$}-{H}{\"o}rmander condition}, preprint
  arxiv:1606.03340, to appear in Michigan Math.\ J.

\bibitem{Li2}
Kangwei Li, \emph{Two weight inequalities for bilinear forms}, Collect. Math.
  \textbf{68} (2017), no.~1, 129--144.

\bibitem{LPRR}
Kangwei {Li}, Carlos {P{\'e}rez}, Israel~P. {Rivera-R{\'{\i}}os}, and Luz
  {Roncal}, \emph{{Weighted norm inequalities for rough singular integral
  operators}}, preprint arXiv:1701.05170.

\bibitem{LuquePR}
Teresa Luque, Carlos P\'erez, and Ezequiel Rela, \emph{Optimal exponents in
  weighted estimates without examples}, Math. Res. Lett. \textbf{22} (2015),
  no.~1, 183--201. \MR{3342184}

\bibitem{NPTV}
Fedor Nazarov, Stefanie Petermichl, Serger Treil, and Alexander Volberg,
  \emph{Convex body domination and weighted estimates with matrix weights},
  preprint arXiv:1701.01907.

\bibitem{Seeger}
Andreas Seeger, \emph{Singular integral operators with rough convolution
  kernels}, J. Amer. Math. Soc. \textbf{9} (1996), no.~1, 95--105. \MR{1317232}

\bibitem{Stein}
Elias~M. Stein, \emph{Harmonic analysis: real-variable methods, orthogonality,
  and oscillatory integrals}, Princeton Mathematical Series, vol.~43, Princeton
  University Press, Princeton, NJ, 1993, With the assistance of Timothy S.
  Murphy, Monographs in Harmonic Analysis, III. \MR{1232192}

\bibitem{ZK2}
Pavel Zorin-Kranich, \emph{{${A}_p$}-{${A}_\infty$} estimates for multilinear
  maximal and sparse operators}, preprint arXiv:1609.06923, to appear in J.
  d{'}Analyse Math.

\end{thebibliography}
\bibliographystyle{amsplain}
\end{document}